\title{\LARGE\bfseries Query-Efficient Zeroth-Order Algorithms for Nonconvex Constrained Optimization}
\author{
Ruiyang Jin\textsuperscript{$\dagger,1$}, Yuke Zhou\textsuperscript{$\ddagger,1$}, Yujie Tang\textsuperscript{$\ddagger,2$}, Jie Song\textsuperscript{$\ddagger,3$}, and Siyang Gao\textsuperscript{$\dagger,2$}\\
{\small
\textsuperscript{$\dagger$}\textit{{City University of Hong Kong,}}
\href{mailto:ruiyajin@cityu.edu.hk}{\textit{\textsuperscript{$1$}ruiyajin@cityu.edu.hk}}, 
\href{mailto:siyangao@cityu.edu.hk}{\textit{\textsuperscript{$2$}siyangao@cityu.edu.hk}}}\\
{\small
\textsuperscript{$\ddagger$}\textit{Peking University,} \href{mailto:zhouyk@pku.edu.cn}{\textit{\textsuperscript{$1$}zhouyk@pku.edu.cn}}, 
\href{mailto:yujietang@pku.edu.cn}{\textit{\textsuperscript{$2$}yujietang@pku.edu.cn}},
\href{mailto:jie.song@pku.edu.cn}{\textit{\textsuperscript{$3$}jie.song@pku.edu.cn}}}
}
\date{\vspace{-0.4 in}}
\begin{document}
\maketitle
% \footnotetext[]{This work was supported by the National Natural Science Foundation of China under Grant No. xxx.}

\begin{abstract}
Zeroth-order optimization (ZO) has been a powerful framework for solving black-box problems, which estimates gradients using zeroth-order data to update variables iteratively. The practical applicability of ZO critically depends on the efficiency of single-step gradient estimation and the overall query complexities. However, existing constrained ZO algorithms cannot achieve efficiency on both simultaneously. In this work, we consider a general constrained optimization model with black-box objective and constraint functions. To solve it, we propose novel algorithms that can achieve the best-known overall query complexity bound of $\mathcal{O}(d/\epsilon^4)$ to find an $\epsilon$-stationary solution ($d$ is the dimension of variables), while reducing the queries for estimating a single-step gradient from $\mathcal{O}(d)$ to $\mathcal{O}(1)$. Specifically, we integrate block gradient estimators with gradient descent ascent, which leads to two algorithms, ZOB-GDA and ZOB-SGDA, respectively. Instead of constructing full gradients, they estimate only partial gradients along random blocks of dimensions, where the adjustable block sizes enable high single-step efficiency without sacrificing convergence guarantees. Our theoretical results establish the finite-sample convergence of the proposed algorithms for nonconvex optimization. Finally, numerical experiments demonstrate the superior performance of our algorithms compared to existing methods.
\end{abstract}

\section{Introduction}\label{sec:intro}
In practical problems, it is common to encounter real systems that lack analytical expressions or models. In such cases, only zeroth-order (input-output) information of the systems is accessible. The lack of higher-order information makes it especially difficult to optimize these systems. In this research, we consider a general constrained optimization model for these problems:
\begin{equation}
\begin{aligned}\label{eq:constrained_problem}
    \min_{x\in \mathbb{R}^{d_x}} h(x)\qquad \text{s.t.}\; c_j(x)\leq 0,\; \forall j\in\mathcal{J},
\end{aligned}    
\end{equation}
where $h:\mathbb{R}^{d_x}\to\mathbb{R}$ is the objective function and each $c_j:\mathbb{R}^{d_x}\to \mathbb{R},\forall j\in\mathcal{J}$ is a constraint function. In our problem, we have the variable dimension $d=d_x$. Both $h(x)$ and $c_j(x)$ do not have analytical expressions and are treated as black boxes, i.e., only the input $x$ and the corresponding deterministic function outputs $h(x)$ or $c_j(x)$ are observable. Neither $h$ nor $c_j,\forall j\in\mathcal{J}$ is necessarily convex.

Problems of the form (\ref{eq:constrained_problem}) arise in many domains, such as power systems \citep{hu2023gradient,zhou2025zeroth}, simulation optimization \citep{park2015penalty}, and machine learning \citep{nguyen2023stochastic}. However, traditional model-based or gradient-based algorithms are generally not applicable to problem (\ref{eq:constrained_problem}), as they rely on first-order or second-order information (e.g., gradients or Hessians) of $h(x)$ and $c_j(x)$ that is not available.
Zeroth-order optimization (ZO), a representative method in derivative-free optimization, offers a promising approach to this type of optimization problem, and has been broadly applied \citep{fu2015handbook,liu2020primer,malladi2023fine,lam2025distributionally}. The fundamental idea behind ZO is to construct estimators of first-order information using zeroth-order data \citep{berahas2022theoretical}, and integrate these estimators into gradient-based algorithms, such as gradient descent, to seek optimal or high-quality solutions.

Under the iterative ZO framework, the efficiency of single-step gradient estimation and overall query complexity jointly determine the practical applicability of ZO. These two metrics refer to the number of function values required to generate a gradient estimator and a final solution, respectively. The traditional \textit{coordinate-wise gradient estimation} (CGE) requires estimating partial gradients along all dimensions separately based on finite differences of function values \citep{kiefer1952stochastic}. CGE-based algorithms can generally enjoy state-of-the-art overall query complexity bounds due to the controllable bias and variance of CGEs \citep{xu2024derivative}. However, each CGE costs $\mathcal{O}(d)$ function evaluations ($d$ is the dimension of the variable space), which becomes inefficient in high-dimensional problems, even if parallel computing techniques are available \citep{scheinberg2022finite,hu2025convergence}. In contrast, the prevalent \textit{randomized gradient estimation} (RGE) only requires one or two function values to construct a gradient estimator along a random direction \citep{flaxman2005online,nesterov2017random}. RGE-based algorithms have demonstrated excellent performance in unconstrained problems. However, they suffer from slow convergence when applied to constrained cases (such as (\ref{eq:constrained_problem})) due to the large variance of gradient estimations \citep{liu2020primer}. Existing methods must trade off low single-step efficiency against good overall complexity, and none of these methods is query-efficient in both for high-dimensional constrained problems.

In view of this challenge, a fundamental question arises:
\emph{To solve (\ref{eq:constrained_problem}),  is it possible to achieve the best-known query complexity bound while improving the single-step efficiency?} Similar challenges have also been recognized in \cite{scheinberg2022finite} and \cite{hu2025convergence}. In this study, we provide a positive answer to this question by utilizing the framework of random block updates to design query-efficient ZO algorithms for solving (\ref{eq:constrained_problem}). We show that our method can enjoy improved single-step efficiency and the best-known overall query complexity bound.

\subsection{Main Contributions}
We assume an oracle that returns $h(x)$ and $c_j(x),\forall j\in \mathcal{J}$ (i.e., we can observe all the function evaluations of $h(x)$ and $c_j(x),\forall j\in \mathcal{J}$ simultaneously via querying a $x$) but no gradient information. To handle the black-box constraints in (\ref{eq:constrained_problem}), we adopt a primal-dual framework by reformulating it as a deterministic min-max problem:
\begin{align}\label{eq:general_minmax}
 \min_{x\in \mathbb{R}^{d_x}}\max_{y\in \mathcal{Y}} f(x,y),
\end{align}
where $f(x,y)=h(x)+y^\text{T} c(x)$ is the Lagrange function of problem (\ref{eq:constrained_problem}). Here, $c(x)=(c_1(x),\cdots,c_{d_y}(x))^\text{T}$ with $d_y=|\mathcal{J}|$; $\mathcal{Y}$ is the feasible set of Lagrange multipliers. Clearly, $f(x,y)$ is \textit{nonconvex-concave}, i.e., nonconvex in $x$ and concave in $y$, when $h(x)$ and $c_j(x),\forall j\in\mathcal{J}$ are not assumed convex. Then, solving problem (\ref{eq:general_minmax}) can provide high-quality solutions to problem (\ref{eq:constrained_problem}) \citep{nesterov2018lectures}. The detailed contributions of this work are summarized as follows.

\begin{table}[htbp]
\caption{Comparison of single-step and overall query complexity bounds}
\label{tab:comparison}
\centering
% \small  % 或 \footnotesize
\renewcommand{\arraystretch}{1.1}
\begin{tabular}{p{4cm}ccc}
\hline
Algorithms & Gradient Estimator & Queries per Step & Overall Queries \\ \hline
SZO-ConEx \citep{nguyen2023stochastic} & RGE & $\mathcal{O}(1)$ & $\mathcal{O}(d/\epsilon^6)$ \\
ZOAGP \citep{xu2024derivative} & CGE & $\mathcal{O}(d)$ & $\mathcal{O}(d/\epsilon^4)$ \\
ZOB-GDA (Ours) & BCGE & $\mathcal{O}(b)$ & $\mathcal{O}(d/\epsilon^6)$ \\
ZOB-SGDA (Ours) & BCGE & $\mathcal{O}(b)$ & $\mathcal{O}(d/\epsilon^4)$ \\
\hline
\end{tabular}

\vspace{0.5ex}
\footnotesize
\noindent
Note: In our algorithms, $d=d_x$. $b$ is the block size that can be chosen from $\{1,2,\ldots,d\}$. Overall queries refer to the theoretical upper bound on the number of oracle queries required to achieve an $\epsilon$-stationary point.
\end{table}

\paragraph{Leveraging Block Updates with Zeroth-Order Algorithms to Solve (\ref{eq:constrained_problem}).} In this research, we adopt the widely-used gradient descent ascent (GDA) framework to solve problem (\ref{eq:constrained_problem}). However, directly applying RGE or CGE in GDA cannot exhibit satisfactory performance in both single-step and overall complexity \citep{hu2025convergence}. To address this, we combine the framework of random block updates with GDA and smoothed GDA to develop two novel algorithms, called \textit{zeroth-order block gradient descent ascent} (ZOB-GDA) and \textit{zeroth-order block smoothed gradient descent ascent} (ZOB-SGDA). Rather than estimating a full gradient at each step, they randomly select a block of coordinates and update the variables using \textit{block coordinate-wise gradient estimations} (BCGEs). The adoption of the BCGEs effectively controls the bias and variance of gradient estimations to be negligible and thereby accelerates convergence. Moreover, the block size is adjustable to control the number of queries required to construct a single-step gradient.

\paragraph{Best-Known Query Complexity Bounds with Controllable Single-Step Efficiency.} We establish finite-sample guarantees for the proposed algorithms by analyzing the min-max problems (\ref{eq:general_minmax}) in nonconvex-concave settings. The query complexity bounds are summarized in Table \ref{tab:comparison} and compared with two representative algorithms. Specifically, ZOB-GDA can find an $\epsilon$-stationary point of $f(x,y)$ with a query complexity bound $\mathcal{O}(d/\epsilon^6)$, which differs from the bound for first-order GDA only by a factor of $d$. Moreover, ZOB-SGDA is shown to have the query complexity bound $\mathcal{O}(d/\epsilon^4)$, which is consistent with the best available results in the literature for solving deterministic nonconvex-concave problems. Different from \cite{xu2024derivative}, our algorithms benefit from controllable efficiency in single-step gradient estimation, which makes them query-efficient for both single-step and overall complexities. The numerical results demonstrate that our algorithms require significantly fewer queries for both a single step and overall convergence compared to existing methods.

\subsection{Related Work}
Here, we review the related work on ZO, zeroth-order GDA, and zeroth-order coordinate/block updates.
\paragraph{Zeroth-Order Optimization.} ZO has emerged as a prevalent tool to solve black-box problems and found application across machine learning \citep{liu2020primer, nguyen2023stochastic}, power systems \citep{hu2023gradient,jin2026zeroth}, simulation optimization \citep{xu2023gradient,lam2025distributionally}, large language models \citep{malladi2023fine,pmlr-v235-zhang24ad}, etc. ZO originates from the stochastic approximation method in \cite{kiefer1952stochastic}, where CGE is applied to estimate partial gradients along all dimensions via finite differences of function values. This is inefficient for high-dimensional problems, even if parallel techniques can be applied \citep{scheinberg2022finite}.  To improve single-step efficiency, one-point and two-point RGEs have been developed by estimating gradients along randomized directions \citep{flaxman2005online,nesterov2017random, lam2025distributionally}. Generally, RGE-based algorithms can achieve the same oracle complexities as their first-order counterparts in unconstrained problems, differing by a dimension-dependent factor \citep{liu2020primer}. However, RGEs suffer from large variance in gradient estimation in constrained problems (see Section \ref{subsec:zoge} for a detailed discussion). Moreover, most literature considers simple constraints on the input $x$ that can be dealt with by projection operations \citep{duchi2015optimal,yuan2015zeroth,jin2026zeroth,he2024parallel}, which cannot be used to solve problem (\ref{eq:constrained_problem}).

\paragraph{Zeroth-Order GDA.} GDA is a classical framework for solving min-max problems and has been extensively studied \citep{nemirovski2004prox, nedic2009subgradient,lin2020gradient,xu2023unified,zhang2020single}.
It is also well-established and widely applied to solve zeroth-order min-max problems of the form (\ref{eq:general_minmax}) \citep{hu2023gradient,zhou2025zeroth}. The authors of \cite{liu2018zeroth} applied the two-point RGE to solve a composite optimization problem. Then, the standard zeroth-order GDA was applied to the general min-max problems \citep{liu2020min,wang2023zeroth}, while only the query complexity bound of nonconvex-strongly concave cases was established. Several variants of zeroth-order GDA have been developed for convex-concave settings, such as zeroth-order OGDA-RR \citep{maheshwari2022zeroth} and zeroth-order extra-gradient \citep{zhou2025zeroth}, which can achieve the query complexity bounds of $\mathcal{O}(d^4/\epsilon^2)$ and $\mathcal{O}(d/\epsilon^2)$ to derive an $\epsilon$-optimal solution, respectively. For nonconvex-concave problems, \cite{nguyen2023stochastic} and \cite{an2024robust} applied RGEs to design algorithms that achieve the query complexity bound of $\mathcal{O}(d/\epsilon^6)$ to derive an $\epsilon$-stationary/KKT point. In addition, \cite{xu2024derivative} proposed combining alternating gradient projection with CGEs to solve a min-max problem with a complexity bound of $\mathcal{O}(d/\epsilon^4)$ (matching the best available bound) to obtain an $\epsilon$-stationary point.

\paragraph{Zeroth-Order Coordinate/Block Updates.} 
The framework of coordinate/block updates is widely adopted in first-order optimization \citep{nesterov2017efficiency, latafat2019new}. The core idea is to apply the partial gradients along a subset of full dimensions to update the iterates. The applications of coordinate/block updates in ZO mainly lie in unconstrained problems \citep{lian2016comprehensive,cai2021zeroth}, where only the coordinate/block gradients along a subset are estimated using coordinate/block CGEs or RGEs at each step. Their extension to constrained problems, however, remains relatively underexplored. In \cite{shanbhag2021zeroth}, the RGE was combined with zeroth-order block updates and projected gradient descent to solve a stochastic constrained problem. Moreover, in \cite{he2024parallel} and \cite{jin2026zeroth}, a cyclic zeroth-order block coordinate descent method and a randomized zeroth-order coordinate descent method were proposed, respectively, to solve the deterministic constrained problems and achieve complexity bounds proportional to \( \epsilon^{-2} \) for nonconvex optimization. However, all these methods require the constraint set to be projection-friendly and coordinate/block-structured, which is usually too restrictive in practical problems and inapplicable to non-analytical constraint sets (such as in our problem (\ref{eq:constrained_problem})).

\section{Preliminaries}
For a positive integer $n$, we denote $[n]:=\{1,2,\cdots,n\}$. For a vector $x\in\mathbb{R}^{d_x}$, denote $(x)_i$ as its $i$th entry. For a differentiable function $h(x):\mathbb{R}^{d_x}\to \mathbb{R}$, denote $\nabla h(x)$ as its gradient at $x$ and $\nabla_i h(x),i\in [d_x]$ as the partial gradient along the $i$th dimension. Similarly, for a differentiable function $f(x,y):\mathbb{R}^{d_x}\times \mathbb{R}^{d_y}\to\mathbb{R}$, denote the partial gradient w.r.t. $x$ (and $y$) by $\nabla_{x}f(x,y)$ (and $\nabla_{y}f(x,y)$). Without further specification, $\|\cdot\|$ denotes the $\ell_2$-norm in Euclidean space. The Euclidean projection operator onto a closed convex set $\mathcal{X}$ is denoted by $\mathcal{P}_\mathcal{X}[\cdot]$. 

\subsection{Assumptions and Stationarity Measure}
Below, we present the key assumptions for our analysis and introduce the definition of stationarity measure for evaluating our proposed algorithms.

\begin{assumption}\label{assump:boundedness}
    The set $\mathcal{Y}$ is compact, i.e., $\mathcal{Y}:=\{y\in \mathbb{R}^{d_y}| 0 \leq y\leq\overline{y}\}$ for some bounded $\overline{y}\in\mathbb{R}^{d_y}$. Moreover, $\Phi (x)=\max_{y\in\mathcal{Y}}f(x,y)$ is lower bounded by some finite constant $\underline{f}$.
\end{assumption}
\begin{remark}
A sufficient condition for the lower boundedness of $\Phi(x)$ is that $h(x)$ is lower bounded for any $x\in\mathbb{R}^{d_x}$. In problem (\ref{eq:general_minmax}), the natural domain of Lagrange multipliers is $\mathbb{R}^{d_y}_+$. Nevertheless, restricting the dual iterates to a compact set $\mathcal{Y}$ that contains at least one optimal multiplier is standard in primal–dual analysis. Under Slater condition, the boundedness of the optimal dual set has been justified in \cite{nedic2009subgradient}. This assumption is commonly imposed in the analysis of GDA-type algorithms and is essential for establishing finite-time convergence guarantees \citep{liu2020min,xu2023unified}.

% while $\mathcal{Y}$ serves as the feasible set of Lagrange multipliers that are generally unbounded, we restrict the iterates to a compact set to follow standard primal–dual analysis.
% Moreover, the boundedness of the optimal dual set has been justified in \cite{nedic2009subgradient}, and we can construct such a bounded set containing the optimal dual variables under the Slater condition. Therefore, this assumption is commonly imposed in existing work \citep{liu2020min,xu2023unified}.
\end{remark}

\begin{assumption}\label{assump:lipschitz}
    $f(x,y)$ is differentiable and Lipschitz continuous, i.e., $\forall (x_1,y_1), (x_2,y_2)\in\mathbb{R}^{d_x}\times\mathcal{Y}$, we have $| f(x_1,y_1)-f(x_2,y_2)| \leq \Lambda\|(x_1,y_1)-(x_2,y_2)\|$ for some $\Lambda>0$.
\end{assumption}
\begin{assumption}\label{assump:smoothness}
$f(x,y)$ is $L$-smooth in $x$ and $y$, i.e., there exist some $L> 0$ satisfying $\| \nabla f(x_1,y_1)-\nabla f(x_2,y_2) \|\leq L\|(x_1,y_1)-(x_2,y_2)\|$ for any $x_1,x_2\in\mathbb{R}^{d_x}$, and $y_1,y_2\in\mathcal{Y}$.
\end{assumption}
Assumptions \ref{assump:lipschitz}-\ref{assump:smoothness} impose the Lipschitz continuity on $f(x,y)$ and its gradients, which are standard in the literature of both first-order and zeroth-order optimization \citep{nedic2009subgradient,ghadimi2013stochastic,zhou2025zeroth}. Similarly, we can also equivalently impose Lipschitz continuity on $h(x),\;c_j(x),\forall j\in \mathcal{J}$ and their gradients to replace Assumptions \ref{assump:lipschitz} and \ref{assump:smoothness}.

For min-max problems, a widely adopted stationarity measure is the proximal gradient for first/zeroth-order nonconvex optimization \citep{lin2020gradient,liu2020min, xu2023unified}:
% \begin{align*}
$$
\mathfrak{g}(x,y)=
\begin{pmatrix}
    \mathfrak{g}_x(x,y) \\
    \mathfrak{g}_y(x,y)
\end{pmatrix}=
\begin{pmatrix}
    \nabla_x f(x,y) \\
    (1/\beta)\left(y-\mathcal{P}_\mathcal{Y} \left[y+\beta \nabla_y f(x,y)\right]\right)
\end{pmatrix},
$$
% \end{align*}
where $\beta$ is the step size for dual updates.
A point $(x,y)\in \mathbb{R}^{d_x}\times \mathcal{Y}$ is a first-order stationary point of (\ref{eq:general_minmax}) if $\|\mathfrak{g}(x,y)\|=0$.

We also introduce another notion of stationarity measure. The problem (\ref{eq:general_minmax}) is equivalent to minimizing the function $\Phi(x) = \max_{y \in \mathcal{Y}} f(x, y)$ over $\mathbb{R}^{d_x}$. The norm of $\nabla \Phi(x)$ is an appropriate stationarity measure for nonconvex optimization when $\Phi(x)$ is differentiable \citep{wang2023zeroth}. However, $\Phi(x)$ may fail to be differentiable even if $f(x, y)$ is concave in $y$, as the maximum may not be uniquely attained. Alternatively, we define the Moreau envelope of $\Phi(x)$ for any $\lambda>0$ as 
\[
\Phi_{\lambda}(x) = \min_{u \in \mathbb{R}^{d_x}} \big\{\Phi(u) + \frac{1}{2\lambda} \|u - x\|^2 \big\}.
\]
Under Assumption \ref{assump:smoothness}, the Moreau envelope $\Phi_{1/2L}(x)$ with parameter $\lambda=\frac{1}{2L}$ and $\nabla \Phi_{1/2L}(x)$ are both well-defined because $\Phi (u)+L\|u-x\|^2$ is strongly convex in $u$ given $x$. Furthermore, $\Phi_{1/2L}(x)$ is differentiable and smooth in $x$. A point $x\in \mathbb{R}^{d_x}$ is a stationary point of $\Phi$ if $\|\nabla \Phi_{1/2L}(x)\| =0$. This stationarity measure is also widely used in nonconvex-concave settings \citep{mahdavinia2022tight, davis2019stochastic}. As shown in \cite{lin2020gradient}, the computational overhead of transferring this notion to the one measured by $\|\mathfrak{g}(x,y)\|$ is negligible relative to the overall query complexity. 
% We will apply $\|\mathfrak{g}(x,y)\|$ or $\|\nabla \Phi_{1/2L}(x)\|$ as the stationarity measure for different algorithms according to the analysis techniques. 
With this minor distinction, we define our stationarity notion as follows.

\begin{definition}
% Let $M(x,y)=\min\left\{\|\mathfrak{g}(x,y)\|,\|\nabla \Phi_{1/2L}(x)\|\right\}$ for some $(x,y)\in \mathbb{R}^{d_x}\times \mathcal{Y}$. 
We say a point $(x,y)\in\mathbb{R}^{d_x}\times \mathcal{Y}$ is an $\epsilon$-stationary point of (\ref{eq:general_minmax}) if $\|\mathfrak{g}(x,y)\|\leq \epsilon$ or $\|\nabla \Phi_{1/2L}(x)\|\leq \epsilon$.
\end{definition}
In this study, we therefore adopt different metrics of stationarity depending on the analysis techniques, since both metrics are commonly applied.

\subsection{Zeroth-Order Gradient Estimation}\label{subsec:zoge}
Various gradient estimators for ZO have been proposed in the literature, where the two-point RGE is most widely applied. For a differentiable function $h:\mathbb{R}^{d_x}\to \mathbb{R}$, the two-point RGE is defined as
\begin{align}\label{eq:2_point_estimator}
    g(x;r,z)=\frac{h(x+rz)-h(x)}{r}\cdot z,
\end{align}
where $g(x;r,z)\in\mathbb{R}^{d_x}$ is the estimated gradient for $\nabla h(x)$. Here, $r>0$ is the smoothing radius and $z\in \mathbb{R}^{d_x}$ is a random perturbation vector typically sampled from the Gaussian distribution $\mathcal{N}(0,I_{d_x})$ or the uniform distribution on a sphere with radius $\sqrt{d_x}$ \citep{nesterov2017random,duchi2015optimal}. 
% When $h$ is smooth, this estimator enjoys a bias bounded by the smoothing radius \citep{malik2020derivative}. 
In contrast, CGE adds perturbation to each dimension separately and applies the finite difference of function values to construct a full gradient. The CGE is defined as
\begin{align}\label{eq:2_point_coordinate_estimator}
    g(x;r,\{e_i\}_{i=1}^{d_x})=\sum_{i\in [d_x]}\frac{h(x+re_i)-h(x)}{r}\cdot e_i,
\end{align}
where $g(x;r,\{e_i\}_{i=1}^{d_x})\in\mathbb{R}^{d_x}$ approximates $\nabla h(x)$, and $e_i\in \mathbb{R}^{d_x}$ is the unit vector with only the $i$th entry being 1. Let $g_i(x;r,e_i)$ denote the $i$th entry of $g(x;r,\{e_i\}_{i=1}^{d_x})$.
% Similarly, the bias of CGE is also negligible given a small smoothing radius \citep{berahas2022theoretical}.

\paragraph{Challenge of Trading off Single-Step and Overall Query Complexities.} It has been shown that the biases of both \eqref{eq:2_point_estimator} and \eqref{eq:2_point_coordinate_estimator} are bounded by $\mathcal{O}(\sqrt{d_x}r)$ \citep{berahas2022theoretical}, which means the biases are negligible when a small $r$ is applied. RGE in (\ref{eq:2_point_estimator}) is efficient for a single step and only requires two function values to construct a gradient. Its variance approximately takes the form $\mathcal{O}(d_x\|\nabla h(x)\|^2+ d_x^3r^2)$ \citep{liu2020primer}. In unconstrained problems, we have $\|\nabla h(x^*)\|=0$ for any optimal solution $x^*$. Therefore, the variance of RGE shrinks as the iterations approach the optimal solution, which allows RGE-based algorithms to mimic their first-order counterparts and achieve similar convergence results. However, in constrained problems, the above property does not hold, as the gradient $\nabla h(x^*)$ may not be zero. The large variance of RGE leads to worse overall query complexity bounds in constrained problems \citep{jin2026zeroth}. In contrast, the variance of CGE is controlled by the order of $\mathcal{O}(d_xr^2)$ and is negligible with small $r$. Therefore, CGE-based algorithms generally enjoy the state-of-the-art overall query complexity bounds \citep{xu2024derivative}. However, CGE requires $\mathcal{O}(d_x)$ function values to construct a full gradient, which is inefficient for a large $d_x$.

As a result, achieving efficiency in both aspects has not been well addressed. Below, we will show that BCGE is a promising answer to this open question when applied to solve \eqref{eq:constrained_problem}.

\section{Zeroth-Order Block Gradient Descent Ascent}
In this section, we leverage BCGEs and block updates to design a new algorithm with controllable single-step efficiency. Then, we establish its convergence guarantee and query complexity bound.

\subsection{Algorithm Design}
We propose the \textit{zeroth-order block gradient descent ascent} (ZOB-GDA) algorithm as presented in Algorithm \ref{algorithm:ZOB-GDA} for solving (\ref{eq:general_minmax}). Our algorithm follows the main steps of standard GDA, which perform the gradient descent in $x$ and gradient ascent in $y$. However, ZOB-GDA differs from the conventional zeroth-order GDA method introduced in \cite{liu2020min}, which employs the RGE in (\ref{eq:2_point_estimator}). Instead of estimating a full gradient, ZOB-GDA randomly selects a block of dimensions to estimate the partial gradients and performs block descent ascent at each step. Specifically, for the $k$th iterate $(x_k,y_k)$, we uniformly sample a block of dimensions, denoted by a set $\mathcal{I}_k\subseteq [d_x]$ with $|\mathcal{I}_k|=b$ ($b$ is the block size ranging from 1 to $d_x$). Then, we estimate the BCGE of $f(x_k,y_k)$ by:
$$G_x^{\mathcal{I}_k}(x_k,y_k)=\sum_{i\in \mathcal{I}_k}\frac{f(x_k+r_ke_i,y_k)-f(x_k,y_k)}{r_k}e_i,$$
where we denote $G_{x}^{\mathcal{I}_k}(x_k,y_k)\in \mathbb{R}^{d_x}$ as the vector with the entries of $\mathcal{I}_k$ being estimated and other entries being 0. We can apply different smoothing radii $r_k$ for each iteration. For simplicity, we denote $G_x(x_k,y_k)=G_x^{\mathcal{I}}(x_k,y_k)$ when $\mathcal{I}=[d_x]$. For the update of the dual variable $y$, we have the partial gradient
$\nabla_y f(x_k,y_k)=c(x_k)$. Therefore, no additional queries are required as $c(x_k)$ has been observed when computing $G_x^{\mathcal{I}_k}(x_k,y_k)$. Therefore, $b+1$ queries are required for each iteration.

Specifically, when the block size satisfies $b=1$, the primal update resembles a coordinate update. When $b=d_x$, the primal update uses a full gradient and resembles the primal update in traditional CGE-based algorithms \citep{xu2024derivative,zhou2025zeroth}. Moreover, we can control the single-step efficiency by adjusting the block size $b$. It will be shown that the choice of $b$ does not affect the theoretical query complexity bound of ZOB-GDA.

Unlike existing ZO literature that applies coordinate/block updates, we first generalize them in the GDA framework to address non-analytical constraints, whose dynamics and convergence analysis are significantly more complicated due to the coupling of primal and dual steps.

\begin{algorithm}[htbp]
	\caption{Zeroth-order block gradient descent ascent (ZOB-GDA)}
	\label{algorithm:ZOB-GDA} 
	\begin{algorithmic}[1]
		\STATE \textbf{Input:} Initial $(x_0,y_0)\in\mathbb{R}^{d_x}\times\mathcal{Y}$, maximum steps $K$, block size $b$, and the step sizes $\alpha, \beta$.
        \FOR{$k=0,1,2,\cdots,K-1$}
		\STATE Uniformly sample $\mathcal{I}_k\subseteq[d_x]$ with $|\mathcal{I}_k|=b$ and update $x_k$ by
        \begin{align}\label{eq:primal_update}
        x_{k+1}=x_k-\alpha\cdot G_x^{\mathcal{I}_k}(x_k,y_k).
        \end{align}

        \STATE Update $y_k$ by $y_{k+1}=\mathcal{P}_{\mathcal{Y}}\left[y_k+\beta\cdot \nabla_y f (x_k,y_k)\right].$
		\ENDFOR
        \STATE \textbf{Output:} $\{(x_k,y_k)\}_{k=0}^K$
	\end{algorithmic}
\end{algorithm}

\subsection{Convergence Results of ZOB-GDA}
In this subsection, we establish convergence guarantees and query complexity bounds for Algorithm~\ref{algorithm:ZOB-GDA} in nonconvex-concave cases. First, we provide the following lemma to bound the bias of coordinate gradient estimation.
\begin{lemma}\label{lemma:estimation_error}
For a $L$-smooth and differentiable function $h:\mathbb{R}^{d_x}\to\mathbb{R}$, i.e., $\|\nabla h(x)-\nabla h(x')\|\leq L\|x-x'\|, \forall x,x'\in\mathbb{R}^{d_x}$, we have $\left| \nabla_i h(x)-g_i(x;r,e_i) \right|\leq \frac{1}{2}Lr$.
\end{lemma}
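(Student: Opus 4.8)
The plan is to recognize that $g_i(x;r,e_i)$ is exactly the forward finite-difference quotient $\frac{h(x+re_i)-h(x)}{r}$ appearing in the definition of CGE in (\ref{eq:2_point_coordinate_estimator}), so that the claim is the standard first-order accuracy estimate for one-sided finite differences of an $L$-smooth function. First I would reduce the multivariate statement to a one-dimensional one by introducing the scalar function $\phi(t)=h(x+te_i)$, which is differentiable with $\phi'(t)=\nabla_i h(x+te_i)$, the partial derivative of $h$ evaluated along the segment in direction $e_i$.

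Next I would apply the fundamental theorem of calculus to write $h(x+re_i)-h(x)=\phi(r)-\phi(0)=\int_0^r \nabla_i h(x+te_i)\,dt$. Dividing by $r$ and subtracting the identity $\nabla_i h(x)=\frac{1}{r}\int_0^r \nabla_i h(x)\,dt$ gives
\[
\nabla_i h(x)-g_i(x;r,e_i)=\frac{1}{r}\int_0^r\big(\nabla_i h(x)-\nabla_i h(x+te_i)\big)\,dt,
\]
which isolates the error as an average of partial-derivative gaps along the perturbation segment.

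Then I would bound the integrand. The key step is to pass from the scalar partial-derivative gap to the full-gradient gap via $|\nabla_i h(x)-\nabla_i h(x+te_i)|\le \|\nabla h(x)-\nabla h(x+te_i)\|$, since a single coordinate is dominated by the Euclidean norm, and then invoke the $L$-smoothness hypothesis together with $\|te_i\|=t$ (recall $\|e_i\|=1$) to obtain $\|\nabla h(x)-\nabla h(x+te_i)\|\le Lt$. Substituting this estimate and integrating yields $\frac{1}{r}\int_0^r Lt\,dt=\frac{Lr}{2}$, which is precisely the desired bound.

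I do not anticipate a genuine obstacle here, as the argument is a routine FTC/Taylor estimate; the only point requiring care is the reduction from the scalar coordinate-wise error to the Euclidean gradient-Lipschitz condition in the hypothesis, i.e., controlling one component of a vector by its norm before applying $L$-smoothness. An equivalent route would be the integral form of Taylor's theorem, but the fundamental-theorem-of-calculus argument above is the cleanest and requires no differentiability beyond $\nabla h$ being $L$-Lipschitz.
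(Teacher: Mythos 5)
Your proof is correct. Note that the paper itself does not supply a proof of this lemma---it explicitly omits it, deferring to prior work such as \cite{lian2016comprehensive} and \cite{berahas2022theoretical}---and your argument (reduce to the scalar function $\phi(t)=h(x+te_i)$, write the error as $\frac{1}{r}\int_0^r\big(\nabla_i h(x)-\nabla_i h(x+te_i)\big)\,dt$ via the fundamental theorem of calculus, dominate the coordinate gap by the Euclidean gradient gap, and integrate the Lipschitz bound $Lt$ to get $Lr/2$) is precisely the standard derivation used in that literature, with every step valid.
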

Lemma \ref{lemma:estimation_error} and its extended versions have appeared in existing literature \citep{lian2016comprehensive, berahas2022theoretical,jin2026zeroth}, thus, we omit its proof here. Lemma \ref{lemma:estimation_error} also implies that $\left| \nabla_i h(x)-g_i(x;r,e_i) \right|^2\leq \frac{1}{4}L^2r^2$. It demonstrates that $g_i(x;r,e_i)$ is a good partial gradient estimator, in the sense that both its bias and variance can be effectively controlled by the smoothing radius. This error bound plays a fundamental role and will be frequently used in our theoretical analysis.

Let $N=\frac{d_x}{b}$, $D_y=\|\overline{y}\|$ and $\Lambda_0=\Lambda+\frac{\sqrt{b}L}{2}\cdot\sup_k \{r_k\}$. Then, we present the following Theorem to characterize the convergence of ZOB-GDA in the sense of $\left\| \nabla \Phi_{1/2L}(x_k)\right\|$.

\begin{theorem}\label{thm:ZOB-GDA}
Suppose Assumptions \ref{assump:boundedness}-\ref{assump:smoothness} hold. The sequence $\{(x_k,y_k)\}_{k=0}^{K}$ is generated by ZOB-GDA. The step sizes satisfy $0<\alpha, \beta\leq 1/L$, and the sequence of smoothing radii satisfies $\sum_{k=0}^K r_k^2\leq \frac{1}{b}$. Then, we have $$\min_{k\leq K-1}\mathbb{E}\left[\left\| \nabla \Phi_{1/2L}(x_k)\right\|\right]\leq \mathcal{O}\left(\sqrt{\frac{N}{\alpha K}}\right)+\epsilon_c,$$
where $\epsilon_c=\big(16 L\Lambda_0 D_y\sqrt{2\alpha/\beta}+32\alpha L\Lambda_0^2\big)^{1/2}$.
\end{theorem}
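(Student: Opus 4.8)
The plan is to use the Moreau envelope $\Phi_{1/2L}$ as a potential function, track its expected decrease along the iterates, and treat the dual suboptimality gap $\delta_k := \Phi(x_k) - f(x_k,y_k)$ as an error term that is controlled separately through the dual ascent step. Throughout, I write $\hat{x}_k := \arg\min_{u}\{\Phi(u)+L\|u-x_k\|^2\}$ for the proximal point, so that $\nabla\Phi_{1/2L}(x_k) = 2L(x_k-\hat{x}_k)$ and $\|x_k-\hat{x}_k\| = \frac{1}{2L}\|\nabla\Phi_{1/2L}(x_k)\|$; these identities and the smoothness of $\Phi_{1/2L}$ hold because $\Phi$ is $L$-weakly convex, a consequence of Assumption~\ref{assump:smoothness} as noted in the preliminaries.

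First I would establish a one-step primal descent inequality. Using $\Phi_{1/2L}(x_{k+1}) \le \Phi(\hat{x}_k)+L\|x_{k+1}-\hat{x}_k\|^2$ together with the update $x_{k+1}=x_k-\alpha G_x^{\mathcal{I}_k}(x_k,y_k)$ and the identity $\Phi_{1/2L}(x_k)=\Phi(\hat{x}_k)+L\|x_k-\hat{x}_k\|^2$, expanding the square gives $\Phi_{1/2L}(x_{k+1}) \le \Phi_{1/2L}(x_k) - 2\alpha L\langle G_x^{\mathcal{I}_k}, x_k-\hat{x}_k\rangle + \alpha^2 L\|G_x^{\mathcal{I}_k}\|^2$. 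Taking the expectation over the random block (each coordinate is selected with probability $b/d_x=1/N$) yields $\mathbb{E}_{\mathcal{I}_k}[G_x^{\mathcal{I}_k}]=\tfrac1N g(x_k;r_k,\{e_i\})$, the scaled full CGE, and $\mathbb{E}_{\mathcal{I}_k}\|G_x^{\mathcal{I}_k}\|^2=\tfrac1N\|g(x_k;r_k,\{e_i\})\|^2$, whose magnitude is controlled through $\Lambda$, $L$, and $r_k$ by Lemma~\ref{lemma:estimation_error}. The central estimate is a lower bound on $\langle\nabla_x f(x_k,y_k),x_k-\hat{x}_k\rangle$: applying $L$-smoothness of $f(\cdot,y_k)$ between $x_k$ and $\hat{x}_k$, and using $f(x_k,y_k)=\Phi(x_k)-\delta_k\ge\Phi_{1/2L}(x_k)-\delta_k$ together with $f(\hat{x}_k,y_k)\le\Phi(\hat{x}_k)=\Phi_{1/2L}(x_k)-L\|x_k-\hat{x}_k\|^2$, I obtain $\langle\nabla_x f(x_k,y_k),x_k-\hat{x}_k\rangle \ge \frac{L}{2}\|x_k-\hat{x}_k\|^2-\delta_k=\frac{1}{8L}\|\nabla\Phi_{1/2L}(x_k)\|^2-\delta_k$. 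Substituting this, handling the CGE bias in the inner product via Lemma~\ref{lemma:estimation_error} and Cauchy--Schwarz, leads to a recursion of the schematic form $\mathbb{E}[\Phi_{1/2L}(x_{k+1})] \le \mathbb{E}[\Phi_{1/2L}(x_k)] - \tfrac{\alpha}{4N}\mathbb{E}\|\nabla\Phi_{1/2L}(x_k)\|^2 + \tfrac{2\alpha L}{N}\mathbb{E}[\delta_k] + \mathcal{O}(\alpha^2 L\Lambda_0^2/N) + \mathcal{O}(\alpha L^2 b\, r_k^2)$.

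The main obstacle is controlling the averaged dual gap $\frac1K\sum_k\mathbb{E}[\delta_k]$. Because $f$ is only concave (not strongly concave) in $y$, the maximizer $y^*(x)$ need not be unique or Lipschitz in $x$, so one cannot simply telescope $\|y_k-y^*(x_k)\|^2$ against a moving comparator. I would instead combine a fixed-comparator online-gradient-ascent regret bound for the projected dual step, which gives $\sum_k[f(x_k,u)-f(x_k,y_k)]\le \frac{R_y^2}{2\beta}+\frac{\beta K\Lambda^2}{2}$ for every fixed $u\in\mathcal{Y}$, with the slow drift of the envelope: since $\Phi$ is $\Lambda$-Lipschitz and $\|x_{k+1}-x_k\|\le\alpha\Lambda_0$, the moving target $\Phi(x_k)$ changes by at most $\mathcal{O}(\alpha\Lambda_0)$ per step, so the fixed-comparator regret can be converted to a bound against the trajectory of maximizers at the price of this drift. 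Balancing the dual catch-up rate $\beta$ against the primal drift rate $\alpha$ is precisely the two-timescale mechanism, and choosing $\alpha/\beta$ small produces the residual floor $\frac1K\sum_k\mathbb{E}[\delta_k]=\mathcal{O}\!\big(\Lambda_0 R_y\sqrt{\alpha/\beta}\big)$; this is the source of the $\sqrt{2\alpha/\beta}$ term in $\epsilon_c$ and the hardest part to make rigorous.

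Finally I would telescope the primal recursion from $k=0$ to $K-1$, invoke Assumption~\ref{assump:boundedness} to lower bound $\Phi_{1/2L}(x_K)\ge\underline{f}$, divide through by $\tfrac{\alpha K}{4N}$, and substitute the dual-gap bound (whose coefficient $\tfrac{2\alpha L}{N}$ becomes $8L$ after the division, matching $16L\Lambda_0 R_y\sqrt{2\alpha/\beta}$). Here the condition $\sum_k r_k^2<\tfrac1b$ is exactly what makes the accumulated smoothing-radius contributions (both the inner-product bias and the $r_k$-dependent part of the variance, which scale like $L^2 b\,r_k^2$) sum to a $K$-independent constant, so that after averaging they vanish and only the $\mathcal{O}(\alpha L\Lambda_0^2)$ floor survives. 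This yields $\min_{k\le K-1}\mathbb{E}\|\nabla\Phi_{1/2L}(x_k)\|^2 \le \frac1K\sum_k\mathbb{E}\|\nabla\Phi_{1/2L}(x_k)\|^2 \le \mathcal{O}(N/(\alpha K)) + 16L\Lambda_0 R_y\sqrt{2\alpha/\beta}+48\alpha L\Lambda_0^2$, and applying Jensen's inequality $\mathbb{E}\|\cdot\|\le\sqrt{\mathbb{E}\|\cdot\|^2}$ together with $\sqrt{a+b}\le\sqrt a+\sqrt b$ gives the stated bound with $\epsilon_c=(16L\Lambda_0 R_y\sqrt{2\alpha/\beta}+48\alpha L\Lambda_0^2)^{1/2}$.
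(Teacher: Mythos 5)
Your Step 1 and Step 3 coincide with the paper's own proof: the one-step Moreau-envelope recursion you derive is exactly the paper's Lemma~\ref{lemma:lemma1} (same prox point $\hat{x}_k$, same conditional expectation $\mathbb{E}[G_x^{\mathcal{I}_k}\mid\mathcal{F}_k]=\tfrac{1}{N}G_x(x_k,y_k)$, same use of smoothness together with $f(\hat{x}_k,y_k)\le\Phi(\hat{x}_k)\le\Phi(x_k)-L\|x_k-\hat{x}_k\|^2$ and the identity $\|x_k-\hat{x}_k\|=\tfrac{1}{2L}\|\nabla\Phi_{1/2L}(x_k)\|$), and the telescoping/Jensen finish is identical. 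The genuine gap is in Step 2, the dual-gap control, and it is twofold. First, as literally written the argument fails: with a single fixed comparator $u=y^*(x_0)$ over the whole horizon, the staleness $f(x_k,y^*(x_k))-f(x_k,u)$ can only be bounded by $\mathcal{O}(\Lambda\|x_k-x_0\|)=\mathcal{O}(\alpha\Lambda_0^2 k)$, so after averaging over $k\le K$ you pick up a term of order $\alpha\Lambda_0^2 K$ that grows with $K$; no $\sqrt{\alpha/\beta}$ floor emerges from the bounds you write down. The missing idea---the entire content of the paper's Lemmas~\ref{lemma:lemma3} and~\ref{lemma:lemma2}---is to partition $\{0,\dots,K-1\}$ into $K/B$ windows of length $B$ and \emph{restart} the comparator at $y^*(x_{jB})$ at the start of each window: the averaged drift cost is then $\mathcal{O}(\alpha\Lambda_0^2 B)$ and the averaged restart cost is $R_y^2/(2\beta B)$, and it is the optimization over the window length, $B=\tfrac{R_y}{\Lambda_0}\sqrt{1/(2\alpha\beta)}$, that creates the square root in $\epsilon_c$. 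Your ``balancing'' sentence asserts the correct answer, but the windowed restart is the step that makes it true, and you neither state it nor supply a substitute.

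Second, even granting the windowing, the regret inequality you invoke, $\sum_k[f(x_k,u)-f(x_k,y_k)]\le \tfrac{R_y^2}{2\beta}+\tfrac{\beta K\Lambda^2}{2}$, is too weak: its gradient-norm term leaves a non-vanishing residual $\mathcal{O}(\beta\Lambda^2)$ in the averaged dual gap, hence an extra $\mathcal{O}(L\beta\Lambda^2)$ inside $\epsilon_c^2$ that is absent from the theorem and would break Corollary~\ref{coro:ZOB-GDA_convergence}, where $\beta$ stays of order $1/L$ while $\alpha\to 0$. The paper's Lemma~\ref{lemma:lemma2} avoids this by exploiting smoothness in the dual step (this is where $\beta\le 1/L$ is used): the projection inequality is combined with the descent lemma in $y$, so the resulting bound is stated against $f(x_k,y_{k+1})$ rather than $f(x_k,y_k)$, the differences $f(x_{k+1},y_{k+1})-f(x_k,y_k)$ telescope within each window, and the only per-step price is $\mathcal{O}(\alpha\Lambda_0^2)$, proportional to the primal movement rather than to $\beta\Lambda^2$. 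In short, your outline has the right architecture---the same as the paper's---but the dual-gap step needs both the windowed comparator restart and the smooth, gradient-norm-free dual regret bound in order to yield the stated $\epsilon_c$.
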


The results in Theorem \ref{thm:ZOB-GDA} imply that ZOB-GDA can converge to an $\epsilon_c$-stationary point at a convergence rate of $\mathcal{O}(\sqrt{N/\alpha K})$. The fixed error $\epsilon_c$ results from the use of constant step sizes for both primal and dual steps, which is also observed for the first-order GDA \citep{nedic2009subgradient}. We provide the proof of Theorem \ref{thm:ZOB-GDA} in Appendix \ref{app:proof_of_thm_ZOBGDA}.

Given the expression of $\epsilon_c$, one could adopt diminishing or small step sizes for $\alpha$ to eliminate the fixed term $\epsilon_c$. The following corollary characterizes the convergence guarantee of ZOB-GDA.

\begin{corollary}\label{coro:ZOB-GDA_convergence}
Suppose that the conditions in Theorem \ref{thm:ZOB-GDA} hold. Further set the step size $\alpha= \mathcal{O}\big((N/K)^{\frac{2}{3}}\big)$. Then, we have $\min_{k\leq K-1}\mathbb{E}\left[\left\| \nabla \Phi_{1/2L}(x_k)\right\|\right]\leq \mathcal{O}\big((N/K)^{\frac{1}{6}}\big)$.
\end{corollary}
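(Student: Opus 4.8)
The plan is to treat this corollary as a step-size balancing argument applied directly to the bound from Theorem \ref{thm:ZOB-GDA}. Reading that bound as a function of $\alpha$ alone (with $\beta$, $L$, $\Lambda_0$, $R_y$ all held as constants — note that $\Lambda_0 \leq \Lambda + L/2$ is uniformly bounded since $\sup_k r_k^2 \leq \sum_k r_k^2 < 1/b$), the right-hand side $\mathcal{O}(\sqrt{N/(\alpha K)}) + \epsilon_c$ consists of two competing pieces: the optimization term $\sqrt{N/(\alpha K)}$, which scales as $\alpha^{-1/2}$ and decreases in $\alpha$, and the bias floor $\epsilon_c$, which increases in $\alpha$. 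The whole argument is a one-variable optimization trading these off.

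First I would make the $\alpha$-dependence of $\epsilon_c$ explicit by expanding $\epsilon_c^2 = 16L\Lambda_0 R_y\sqrt{2\alpha/\beta} + 48\alpha L\Lambda_0^2$. The first summand scales as $\alpha^{1/2}$ and the second as $\alpha^{1}$; for the small step sizes we will ultimately take (namely $\alpha \to 0$ as $K \to \infty$), the $\alpha^{1/2}$ term dominates, so $\epsilon_c^2 = \mathcal{O}(\alpha^{1/2})$ and hence $\epsilon_c = \mathcal{O}(\alpha^{1/4})$. The bound from Theorem \ref{thm:ZOB-GDA} then reduces to
\[
\min_{k\leq K-1}\mathbb{E}\big[\|\nabla\Phi_{1/2L}(x_k)\|\big] \leq \mathcal{O}\big(\alpha^{-1/2}\sqrt{N/K}\big) + \mathcal{O}\big(\alpha^{1/4}\big).
\]

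Next I would balance the two terms. Setting $\alpha^{-1/2}\sqrt{N/K} = \alpha^{1/4}$ gives $\alpha^{3/4} = (N/K)^{1/2}$, which is exactly the prescribed choice $\alpha = \mathcal{O}\big((N/K)^{2/3}\big)$ (and one checks the same order arises from zeroing the derivative $-\tfrac{1}{2}A\alpha^{-3/2} + \tfrac{1}{4}B\alpha^{-3/4}$ with $A=\sqrt{N/K}$). Substituting this $\alpha$ back, the optimization term becomes $(N/K)^{-1/3}(N/K)^{1/2} = (N/K)^{1/6}$ and the bias term becomes $\big((N/K)^{2/3}\big)^{1/4} = (N/K)^{1/6}$, so both collapse to the same order and the combined bound is $\mathcal{O}\big((N/K)^{1/6}\big)$, as claimed.

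Finally, I would verify admissibility under the hypotheses of Theorem \ref{thm:ZOB-GDA}. Since $N = d_x/b$ is fixed, $\alpha = \mathcal{O}\big((N/K)^{2/3}\big)$ satisfies the requirement $\alpha \leq 1/L$ for all sufficiently large $K$ (concretely once $K \gtrsim N L^{3/2}$), while the constraint $\sum_k r_k^2 < 1/b$ on the smoothing radii is independent of $\alpha$ and is retained unchanged. The argument is essentially routine calculus, so the only genuine subtlety is correctly identifying the asymptotically dominant summand of $\epsilon_c$: mistaking the $\alpha$ term for the dominant one would yield the exponent $\alpha^{1/2}$, a different balance point, and a worse final rate. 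Once that is settled, the remaining work is pure substitution.
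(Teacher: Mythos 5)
Your proposal is correct and matches the paper's approach: the paper's own proof is precisely the one-line observation that substituting $\alpha = \mathcal{O}\big((N/K)^{2/3}\big)$ into the bound of Theorem \ref{thm:ZOB-GDA} makes both the optimization term $\sqrt{N/(\alpha K)}$ and the dominant $\sqrt{\alpha}$-piece of $\epsilon_c^2$ collapse to order $(N/K)^{1/6}$. Your additional work — deriving the balance point, noting $\Lambda_0 \leq \Lambda + L/2$ is uniformly bounded, and checking $\alpha \leq 1/L$ holds for large $K$ — simply fills in details the paper leaves implicit.
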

The derivation of Corollary \ref{coro:ZOB-GDA_convergence} is straightforward by substituting the step size into the result in Theorem \ref{thm:ZOB-GDA}. Since the step size $\alpha$ is dependent on the maximum steps $K$, we can apply diminishing settings for $\alpha$ in practice.
Corollary \ref{coro:ZOB-GDA_convergence} shows that using two-time-scale step sizes for the updates of $x$ and $y$ can effectively eliminate the fixed error term $\epsilon_c$. 
% This phenomenon also aligns with the first-order GDA in nonconvex-concave settings \citep{lin2020gradient}. 
Building upon Corollary \ref{coro:ZOB-GDA_convergence}, we obtain the following corollary to establish the overall query complexity bound of ZOB-GDA.

\begin{corollary}\label{coro:ZOB-GDA_complexity}
Suppose the conditions in Theorem \ref{thm:ZOB-GDA} hold. Set $\alpha= \mathcal{O}\left(\epsilon^4\right)$ for any sufficiently small $\epsilon>0$. Then, the query complexity to achieve $\min_{k\leq K-1}\mathbb{E}\left[\left\| \nabla \Phi_{1/2L}(x_k)\right\|\right]\leq\epsilon$ is $\mathcal{O}\left(\frac{d_x}{\epsilon^6}\right)$.
\end{corollary}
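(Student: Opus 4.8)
The plan is to treat the query complexity as a pure bookkeeping consequence of the iteration-complexity bound already established in Corollary~\ref{coro:ZOB-GDA_convergence}, combined with a count of how many function evaluations each iteration consumes. No new analytic inequality is needed; the only genuine content is verifying that the block size $b$ cancels in the final tally.

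First I would invoke Corollary~\ref{coro:ZOB-GDA_convergence}, which fixes the two-time-scale step size $\alpha=\mathcal{O}\big((N/K)^{2/3}\big)$ and yields $\min_{k\leq K-1}\mathbb{E}[\|\nabla\Phi_{1/2L}(x_k)\|]\leq \mathcal{O}\big((N/K)^{1/6}\big)$. To force the right-hand side below a prescribed tolerance $\epsilon$, I would impose $(N/K)^{1/6}\leq \epsilon$, i.e.\ $N/K \leq \epsilon^6$, which requires $K=\Omega(N/\epsilon^6)=\Omega\big(d_x/(b\epsilon^6)\big)$ since $N=d_x/b$. Substituting $N/K=\Theta(\epsilon^6)$ back into the prescribed step size confirms internal consistency, because $\alpha=\mathcal{O}\big((\epsilon^6)^{2/3}\big)=\mathcal{O}(\epsilon^4)$ reproduces exactly the choice stated in the corollary; for sufficiently small $\epsilon$ this $\alpha$, together with a fixed dual step $\beta=\Theta(1)$, satisfies the admissibility conditions $0<\alpha,\beta\leq 1/L$ and $\sum_k r_k^2<1/b$ demanded by Theorem~\ref{thm:ZOB-GDA}.

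Next I would count the per-iteration cost. Each step forms the block estimator $G_x^{\mathcal{I}_k}(x_k,y_k)$, which evaluates $f(x_k+r_ke_i,y_k)$ for the $b$ indices $i\in\mathcal{I}_k$ plus the single anchor value $f(x_k,y_k)$, for a total of $b+1=\mathcal{O}(b)$ queries; the dual ascent step uses $\nabla_y f(x_k,y_k)=c(x_k)$, which is already revealed by the query at $x_k$ and hence costs nothing extra. Multiplying the iteration count by the per-step cost gives $K\cdot\mathcal{O}(b)=\Omega\big(d_x/(b\epsilon^6)\big)\cdot\mathcal{O}(b)=\mathcal{O}(d_x/\epsilon^6)$, in which the factor $b$ cancels, delivering the claimed $\mathcal{O}(d_x/\epsilon^6)$ bound.

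The main obstacle is conceptual rather than technical: one must confirm that enlarging the block $b$ decreases the required iteration count (through $N=d_x/b$) while inflating the per-iteration cost by the very same factor, so that $b$ drops out of the overall budget. This exact cancellation is what makes the single-step efficiency of ZOB-GDA ``free.'' I would therefore be careful to check that no residual $b$-dependence hides inside the constants absorbed by the $\mathcal{O}(\cdot)$ notation; in particular $\Lambda_0=\Lambda+\frac{\sqrt{b}L}{2}\sup_k r_k$ remains bounded uniformly in $b$, since $\sum_k r_k^2<1/b$ forces $\sqrt{b}\,r_k<1$ and hence $\Lambda_0<\Lambda+L/2$.
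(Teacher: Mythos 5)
Your proposal is correct and follows exactly the route the paper intends: Corollary \ref{coro:ZOB-GDA_complexity} is obtained from Corollary \ref{coro:ZOB-GDA_convergence} by setting $N/K = \Theta(\epsilon^6)$ (hence $\alpha = \mathcal{O}(\epsilon^4)$ and $K = \Omega(d_x/(b\epsilon^6))$) and multiplying by the $b+1$ queries per iteration, so that $b$ cancels. Your additional check that $\Lambda_0 \leq \Lambda + L/2$ uniformly in $b$ (via $\sum_k r_k^2 < 1/b$) is a sound verification that no hidden $b$-dependence survives in the constants, consistent with the paper's analysis.
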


To the best of our knowledge, Corollary \ref{coro:ZOB-GDA_complexity} establishes the first query complexity bound for zeroth-order algorithms in the standard GDA framework for nonconvex-concave settings. Notably, this complexity bound differs from the first-order GDA by an additional factor $d_x$ \citep{lin2020gradient}, which is inherent to zeroth-order gradient estimation and occurs commonly in zeroth-order optimization \citep{liu2020primer}. Compared to the ZOAGP algorithm \citep{xu2024derivative} with the query complexity bound $\mathcal{O}(d/\epsilon^4)$, ZOB-GDA's complexity bound seems worse due to the limitation of standard GDA frameworks. However, the single-step gradient estimation can be significantly more efficient for ZOB-GDA by using a small block size. In the next section, we will leverage block updates with a variant of the GDA framework to design a new algorithm that achieves both the best-known overall query complexity bound and adjustable single-step efficiency.

\section{Zeroth-Order Block Smoothed Gradient Descent Ascent}
In this section, we leverage block updates with a variant of GDA, smoothed GDA, to design a new algorithm, and show its convergence result.

\subsection{Algorithm Design}
Before presenting our algorithm, we define the smoothed function of $f(x,y)$ as
$$H(x,y;z)=f(x,y)+\frac{p}{2}\|x-z\|^2,$$
for some auxiliary variable $z\in\mathbb{R}^{d_x}$. The squared term can introduce strong convexity and further smoothness in $x$ with a proper $p$. Then, we will perform GDA on the smoothed function $H(x,y;z)$, which is inspired by the first-order smoothed GDA (SGDA) in \cite{zhang2020single}. The \textit{zeroth-order block smoothed gradient descent ascent} (ZOB-SGDA) algorithm is proposed as shown in Algorithm \ref{alg:zoscgda}. Similarly to ZOB-GDA, we uniformly sample a block of dimensions $\mathcal{I}_k$ from $[d_x]$ for primal variables and only update the selected dimensions using BCGEs at each step. We denote the partial gradients along the dimensions $\mathcal{I}_k$ as
$$G_x^{\mathcal{I}_k}(x_k,y_k;z_k)=\sum_{i\in \mathcal{I}_k}\left( \frac{f(x_k+r_k e_i,y_k)-f(x_k,y_k)}{r_k} e_i + p(x_k-z_k)\odot e_i\right),$$
where $\odot$ denotes the Hadamard (element-wise) product. We also denote $G_x(x_k,y_k;z_k)=G_x^{\mathcal{I}}(x_k,y_k;z_k)$ when $\mathcal{I}=[d_x]$.
The update of $y_k$ follows the same way in ZOB-GDA. Additionally, an extra update for $z_k$ is introduced by an averaging step to stabilize the primal dynamics, which is crucial for improving ZOB-GDA. When $\gamma=1$, it is obvious that ZOB-SGDA reduces to ZOB-GDA.

\begin{algorithm}[htbp]
\caption{Zeroth-Order Block Smoothed Gradient Descent Ascent (ZOB-SGDA)}
\label{alg:zoscgda} 
\begin{algorithmic}[1]
    \STATE \textbf{Input:} Initial $(x_0,y_0)\in\mathbb{R}^{d_x}\times\mathcal{Y}, z_0=x_0$, maximum steps $K$, block size $b$, and the step sizes $\alpha, \beta, \gamma$.
    \FOR{$k=0,1,2,\cdots,K-1$}
    \STATE Uniformly sample $\mathcal{I}_k\subseteq [d_x]$ with $|\mathcal{I}_k|=b$ and update $x_k$ by 
    \begin{align}\label{eq:smoothed_primal_update}
    x_{k+1}=x_k-\alpha\cdot G^{\mathcal{I}_k}_x(x_k,y_k;z_k).
    \end{align}

    \STATE Update $y_k$ by $y_{k+1}=\mathcal{P}_{\mathcal{Y}}\left[y_k+\beta\cdot \nabla_y H(x_{k},y_k;z_k)\right].$
    \STATE Update $z_k$ by $z_{k+1}=\gamma x_{k+1}+(1-\gamma)z_k$.
    \ENDFOR
    \STATE \textbf{Output:} $\{(x_k,y_k)\}_{k=0}^K$
\end{algorithmic}
\end{algorithm}

\subsection{Convergence Analysis of ZOB-SGDA}
By properly setting the parameters for ZOB-SGDA, we can establish its convergence result in the sense of $\|\mathfrak{g}(x_k,y_k)\|$ as summarized in Theorem \ref{thm:zobsgda}.
\begin{theorem}\label{thm:zobsgda}
Suppose Assumptions \ref{assump:boundedness} and \ref{assump:smoothness} hold. The sequence $\{(x_k,y_k)\}_{k=0}^K$ is derived from ZOB-SGDA. Set the parameters $p\geq 3L$, $\sum_{k=0}^{K}r_k^2\leq\frac{1}{b}$, and $\alpha\leq \frac{1}{p+10L+1}$. Furthermore, let
$\beta\leq \min\big\{ \frac{1}{12L},\frac{\alpha^2(p-L)^2}{4L(\sqrt{N}+\alpha(p-L))^2} \big\}$, and $\gamma \leq \min\big\{ \sqrt{\frac{1}{KN}}, \frac{1}{36},\frac{1}{768p\beta} \big\}.$
Then, we have
$$\min_{k\leq K-1} \mathbb{E}\left[\|\mathfrak{g}(x_k,y_k)\|\right]\leq  \mathcal{O}\bigg(\left(\frac{N}{K}\right)^{1/4} \,\bigg).$$
\end{theorem}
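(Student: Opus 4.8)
The plan is to adapt the single-loop smoothed-GDA Lyapunov analysis of \cite{zhang2020single} to the zeroth-order block-coordinate setting, where the two genuinely new difficulties are (i) the random block selection $\mathcal{I}_k$, which replaces the full primal gradient by an estimator that is unbiased up to the CGE bias and scaled by $b/d_x=1/N$, and (ii) the CGE bias, which Lemma~\ref{lemma:estimation_error} controls through the smoothing-radius condition $\sum_k r_k^2\leq 1/b$. I first set up the objects driving the analysis: the inner maximum $\bar K(x,z)=\max_{y\in\mathcal Y}K(x,y;z)=\Phi(x)+\tfrac{p}{2}\|x-z\|^2$, its minimizer $\hat x(z)=\arg\min_x\bar K(x,z)$, and the value $P(z)=\min_x\bar K(x,z)$. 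Since $p\geq 3L$ makes $\bar K(\cdot,z)$ strongly convex, $\hat x(z)$ is unique, $P(z)$ is exactly the Moreau envelope $\Phi_{1/p}(z)$, and $\nabla P(z)=p(z-\hat x(z))$ is Lipschitz with some constant $L_P$. The proof then tracks three quantities: the envelope value $P(z_k)$ (the descent quantity), the primal tracking error $\|x_k-\hat x(z_k)\|$, and the dual gap $\bar K(x_k,z_k)-K(x_k,y_k;z_k)\geq 0$.

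The three per-step estimates are as follows. For the primal descent, applying $(L+p)$-smoothness of $K$ in $x$ and taking expectation over $\mathcal{I}_k$, the update $x_{k+1}=x_k-\alpha G_x^{\mathcal{I}_k}$ satisfies exactly $\mathbb{E}[x_{k+1}-x_k\mid\mathcal F_k]=-\tfrac{\alpha}{N}G_x$ and $\mathbb{E}[\|x_{k+1}-x_k\|^2\mid\mathcal F_k]=\tfrac{\alpha^2}{N}\|G_x\|^2$, so that for $\alpha\leq 1/(p+10L+1)$ one obtains $\mathbb{E}[K(x_{k+1},y_k;z_k)\mid\mathcal F_k]\leq K(x_k,y_k;z_k)-\tfrac{\alpha}{2N}\|\nabla_x K(x_k,y_k;z_k)\|^2+\mathcal{O}(\alpha b L^2 r_k^2)$, the last term being the CGE bias bounded via Lemma~\ref{lemma:estimation_error}. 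For the dual ascent, since $f(x,y)=h(x)+y^{\mathrm T}c(x)$ is linear in $y$ with $\nabla_y K=c(x_k)$, nonexpansiveness of $\mathcal P_{\mathcal Y}$ gives, for every $y\in\mathcal Y$, $\|y_{k+1}-y\|^2\leq\|y_k-y\|^2-2\beta[K(x_k,y;z_k)-K(x_k,y_k;z_k)]+\beta^2\|c(x_k)\|^2$, converting dual progress into the dual gap. For the envelope descent, using $\nabla P(z_k)=p(z_k-\hat x(z_k))$, the $L_P$-smoothness of $P$, and $z_{k+1}-z_k=\gamma(x_{k+1}-z_k)$, I expand $P(z_{k+1})\leq P(z_k)+\gamma p\langle z_k-\hat x(z_k),x_{k+1}-z_k\rangle+\tfrac{L_P\gamma^2}{2}\|x_{k+1}-z_k\|^2$, and writing $x_{k+1}-z_k=(x_{k+1}-\hat x(z_k))-(z_k-\hat x(z_k))$ extracts the dominant term $-\gamma p\|z_k-\hat x(z_k)\|^2=-\tfrac{\gamma}{p}\|\nabla P(z_k)\|^2$ plus a cross term controlled by the primal tracking error.

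I then combine these into a single Lyapunov function, schematically $V_k=P(z_k)+\lambda_1(\bar K(x_k,z_k)-P(z_k))+\lambda_2(\bar K(x_k,z_k)-K(x_k,y_k;z_k))$ (possibly augmented by a $\|z_k-z_{k-1}\|^2$ tracking term), with weights $\lambda_1,\lambda_2$ and the step-size relations of the statement chosen so that the residuals of primal descent and dual ascent are dominated by the contraction of the tracking-error and dual-gap terms. The hierarchy $\gamma\ll\beta\ll\alpha^2$ enforced by $\beta\leq\tfrac{\alpha^2(p-L)^2}{4L(\sqrt N+\alpha(p-L))^2}$ and $\gamma\leq\min\{\sqrt{1/(KN)},1/36,1/(768p\beta)\}$ is precisely what makes $x$ and $y$ track the inner saddle of $K(\cdot,\cdot;z_k)$ faster than $z$ drifts, keeping $\|x_k-\hat x(z_k)\|$ and the dual gap summable against the envelope decrease. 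Establishing the one-step inequality $\mathbb{E}[V_{k+1}]\leq\mathbb{E}[V_k]-c\gamma\,\mathbb{E}[\|\nabla P(z_k)\|^2]+\mathcal{O}(\alpha b L^2 r_k^2)$ and telescoping over $k=0,\dots,K-1$, together with $\sum_k b r_k^2\leq 1$ and $\gamma\asymp\sqrt{1/(KN)}$, yields $\tfrac{1}{K}\sum_k\mathbb{E}[\|\nabla P(z_k)\|^2]\leq\mathcal{O}(1/(\gamma K))=\mathcal{O}(\sqrt{N/K})$, hence $\min_k\mathbb{E}[\|\nabla P(z_k)\|]\leq\mathcal{O}((N/K)^{1/4})$ by Jensen.

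Finally, I translate the envelope bound into the proximal-gradient measure $\mathfrak g$. A small $\|\nabla P(z_k)\|=p\|z_k-\hat x(z_k)\|$ places $z_k$ near the prox point, while the summable tracking error and dual gap place $(x_k,y_k)$ near the inner saddle; the identity $\nabla_x K(x,y;z)=\nabla_x f(x,y)+p(x-z)$ together with the saddle optimality then bounds $\|\mathfrak g(x_k,y_k)\|$ by $\|\nabla P(z_k)\|$ plus the tracking error, the square root of the dual gap, and the CGE bias, all of the same order $\mathcal{O}((N/K)^{1/4})$. I expect the main obstacle to be the third step: fixing $\lambda_1,\lambda_2$ and verifying every sign in the one-step Lyapunov inequality while correctly propagating the block factor $N$ through all three couplings—specifically, showing that the envelope cross term and the $\beta^2\|c(x_k)\|^2$ dual overshoot are absorbed by the primal contraction (itself slowed by $1/N$) is exactly what forces the $\beta$–$\alpha$–$N$ and $\gamma$–$\beta$ balances in the hypotheses. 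The subsequent conversion to $\mathfrak g$ is comparatively routine once the tracking errors are known to vanish at rate $(N/K)^{1/4}$.
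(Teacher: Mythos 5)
Your setup (the smoothed function $K$, the envelope $P(z)=m(z)=\Phi_{1/p}(z)$, the block scaling $\mathbb{E}[\|x_{k+1}-x_k\|^2\mid\mathcal F_k]=\tfrac{\alpha^2}{N}\|G_x\|^2$, and the CGE bias handled via Lemma \ref{lemma:estimation_error} and $\sum_k r_k^2\leq 1/b$) matches the paper's, but the core of your plan --- the one-step Lyapunov inequality in your Step 3 --- has a genuine gap, and it is exactly the point where the paper's proof does something your outline does not. Your potential $V_k=P(z_k)+\lambda_1(\bar K(x_k,z_k)-P(z_k))+\lambda_2(\bar K(x_k,z_k)-K(x_k,y_k;z_k))$ measures dual progress by the naive gap $\max_{y\in\mathcal Y}K(x_k,y;z_k)-K(x_k,y_k;z_k)$ at the current $x_k$. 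Since $f$ is only concave (not strongly concave) in $y$, a single projected ascent step does not contract this gap at any geometric rate, and the gap can grow when $x_k$ and $z_k$ move; likewise, descent on your primal term $\bar K(x_k,z_k)-P(z_k)$ would require $y_k$ to be near the inner maximizer, which is the same unproven smallness of the dual gap --- the argument is circular. Consequently, no choice of fixed weights $\lambda_1,\lambda_2$ yields $\mathbb{E}[V_{k+1}]\leq\mathbb{E}[V_k]-c\gamma\,\mathbb{E}[\|\nabla P(z_k)\|^2]+\mathcal{O}(\alpha bL^2r_k^2)$. The paper (following Zhang et al., 2020) instead builds the potential $\phi=K-2d+2m$ out of the \emph{dual function} $d(y,z)=\min_x K(x,y;z)$, whose smoothness in $y$ (Lemma \ref{lemma:Lip_of_d}) is what turns one ascent step into measurable progress; and even then the drift bound (Lemma \ref{lemma:potential_one_step_drift}) is \emph{not} a clean descent: it contains an irreducible positive term $T_2=24p\gamma\|x^*(z_k)-x(y_+(z_k),z_k)\|^2$ that the negative terms $T_1$ cannot absorb.

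Handling $T_2$ requires two ingredients absent from your proposal: the dual error bound of Lemma \ref{lemma:opti_x_to_y}, namely $\beta(p-L)\|x^*(z_k)-x(y_+(z_k),z_k)\|^2\leq(1+\beta L+\beta L\sigma_2)D_y\|y_k-y_+(z_k)\|$ --- note the mismatched powers (square on the left, first power on the right) and the essential use of the compactness of $\mathcal Y$ --- together with a two-case analysis: when $\tfrac12 T_1\leq T_2$, the error bound forces $\|y_k-y_+(z_k)\|$, $\|x_{k+1}-x_k\|$, $\|z_{k+1}-z_k\|$ all to be $\mathcal O(\gamma)$-small, so the stationarity measure is bounded directly; otherwise the potential genuinely decreases and telescopes. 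This case split, not a monotone Lyapunov decrease, is what produces the $(N/K)^{1/4}$ rate. Relatedly, your final step converts $\|\nabla P(z_k)\|$ into $\|\mathfrak g(x_k,y_k)\|$ via ``tracking error plus dual gap,'' but smallness of the dual gap is never established anywhere in your argument; the paper sidesteps this entirely by bounding $\|\mathfrak g(x_k,y_k)\|^2$ purely by the movement quantities $\|x_{k+1}-x_k\|^2$, $\|y_k-y_+(z_k)\|^2$, $\|x_{k+1}-z_k\|^2$ (Lemma \ref{lemma:stationarity_bound}), which are exactly what $T_1$ and the case analysis control. So the difficulty you flagged as the ``main obstacle'' is not bookkeeping: without the dual-function-based potential and the error-bound/case-split mechanism, the plan as written does not go through.
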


The results in Theorem \ref{thm:zobsgda} show that ZOB-SGDA can converge to a stationary point at the rate of $\mathcal{O}\big(\big(\frac{N}{K}\big)^{\frac{1}{4}}\big)$, which corresponds to a KKT point under proper conditions (see Section \ref{sec:discussion} for further discussion).
Note that Theorem \ref{thm:zobsgda} does not require the Lipschitz continuity assumption for $f(x,y)$. We provide the proof of Theorem \ref{thm:zobsgda} in Appendix \ref{app:proof_of_thm_ZOBSGDA}.

Similarly, based on Theorem \ref{thm:zobsgda}, we have the following corollary to characterize the query complexity of ZOB-SGDA.

\begin{corollary}\label{coro:zobsgda}
Suppose that the conditions in Theorem \ref{thm:zobsgda} hold. For any sufficiently small $\epsilon>0$, set $\alpha, \beta, r_k$ as in Theorem \ref{thm:zobsgda} and $\gamma= \mathcal{O}(\epsilon^2/N)$. Then, the query complexity to achieve $\min_{k\leq K-1}\mathbb{E}\left[\left\| \mathfrak{g}(x_k,y_k)\right\|\right]\leq\epsilon$ is $\mathcal{O}\left(\frac{d_x}{\epsilon^4}\right)$.
\end{corollary}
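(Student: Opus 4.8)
The plan is to convert the convergence rate from Theorem \ref{thm:zobsgda} into an iteration count and then into a query count. From Theorem \ref{thm:zobsgda} we have $\min_{k\leq K-1}\mathbb{E}[\|\mathfrak{g}(x_k,y_k)\|]\leq \mathcal{O}((N/K)^{1/4})$ with $N=d_x/b$. To guarantee the left-hand side is at most $\epsilon$, I would force $(N/K)^{1/4}\leq\epsilon$, which requires $K=\mathcal{O}(N/\epsilon^4)$ outer iterations.

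Next, I need to check that the prescribed schedule $\gamma=\mathcal{O}(\epsilon^2/N)$ is consistent with the constraints in Theorem \ref{thm:zobsgda}. Substituting $K=\mathcal{O}(N/\epsilon^4)$ into the binding constraint $\gamma\leq\sqrt{1/(KN)}$ gives $\sqrt{1/(KN)}=\sqrt{\epsilon^4/N^2}=\epsilon^2/N$, so this choice of $\gamma$ exactly saturates the first constraint; the remaining constraints $\gamma\leq 1/36$ and $\gamma\leq 1/(768p\beta)$ hold for all sufficiently small $\epsilon$ because $p$ and $\beta$ are fixed by the theorem. Hence the hypotheses of Theorem \ref{thm:zobsgda} remain in force under these settings.

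Finally, I would count the per-iteration query cost and multiply. At each step, forming $G_x^{\mathcal{I}_k}(x_k,y_k;z_k)$ requires evaluating $f(x_k,y_k)$ once together with the $b$ perturbed values $f(x_k+r_ke_i,y_k)$, $i\in\mathcal{I}_k$; the dual step reuses $c(x_k)$ at no extra query cost, and the $z_k$ averaging is query-free, so each iteration costs $b+1=\mathcal{O}(b)$ queries. Multiplying, the total is $K\cdot\mathcal{O}(b)=\mathcal{O}(N/\epsilon^4)\cdot b=\mathcal{O}((d_x/b)\cdot b/\epsilon^4)=\mathcal{O}(d_x/\epsilon^4)$. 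The key observation is that $b$ cancels: a smaller block forces proportionally more iterations through $N=d_x/b$, but each iteration is proportionally cheaper, leaving the overall complexity invariant to $b$. I do not expect a genuine obstacle here, since all the analytical work resides in Theorem \ref{thm:zobsgda}; the only point requiring care is confirming that the $\gamma$ schedule is compatible with the theorem's parameter constraints, after which the bound follows by elementary substitution.
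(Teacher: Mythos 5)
Your proposal is correct and follows essentially the same route the paper intends: the corollary is a direct substitution of $K=\mathcal{O}(N/\epsilon^4)$ into the rate $\mathcal{O}((N/K)^{1/4})$ from Theorem \ref{thm:zobsgda}, a check that $\gamma=\mathcal{O}(\epsilon^2/N)$ saturates the constraint $\gamma\leq\sqrt{1/(KN)}$ (which matters because the theorem's proof in fact uses $\gamma=\Theta(1/\sqrt{KN})$, so saturation rather than mere inequality is the right reading), and the per-iteration count of $b+1$ queries so that $b$ cancels via $N=d_x/b$. Nothing is missing.
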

We can see from Corollary \ref{coro:zobsgda} that ZOB-SGDA has the \emph{best-known} query complexity bound $\mathcal{O}(d_x/\epsilon^4)$ \citep{xu2024derivative}, regardless of the choice of block sizes. To the best of our knowledge, this result also matches the best-known complexity bound for first-order single-loop algorithms, up to a factor of $d_x$ \citep{xu2023unified,zhang2020single}.
In other words, ZOB-SGDA can achieve the best-known overall query complexity bound while maintaining controllable single-step efficiency. Specifically, only two queries are required for each step when we set $b=1$, which is more efficient than CGE-based algorithms that require $\mathcal{O}(d)$ queries for a gradient estimation \citep{hu2025convergence}.

\section{Discussions}\label{sec:discussion}
In this section, we present the following discussions regarding our algorithms and theoretical results.

First, block size plays an important role in determining the performance of our algorithms. Our theory shows that different block sizes share the same overall query complexity bound. Notably, block size directly governs the trade-off between single-step efficiency and iteration complexity. Smaller blocks reduce the number of queries per iteration but typically require more iterations to achieve certain errors. However, as shown in our following numerical results, properly tuned block sizes can substantially reduce the total number of queries in practice. One may therefore treat $b$ as a hyperparameter and use a few short, early-stopping runs to select a small but effective block size that balances per-iteration cost and overall query complexity. Developing finer-grained complexity bounds that make the dependence on $b$ explicit is an interesting direction for future work.

Second, under suitable conditions, the stationary points of (\ref{eq:general_minmax}) can provide solutions to (\ref{eq:constrained_problem}). Our theoretical results establish convergence guarantees to stationary points of $f(x,y)$ for the proposed algorithms, while our goal is to establish the convergence guarantees to the solutions to problem (\ref{eq:constrained_problem}). Therefore, we define the KKT gap at a point $(x,y)\in\mathbb{R}^{d_x}\times\mathcal Y$ as
\begin{align}\label{eq:kkt_gap}
\mathcal{K}(x,y)\;:=\;\left\| \nabla_x f(x,y)\right\|
\;+\;\max_{j\in\mathcal J}[c_j(x)]_+
\;+\;\max_{j\in\mathcal{J}} (y)_j |c_j(x)|,
\end{align}
where $[\cdot]_+:=\max (\cdot,0)$. The first term in \eqref{eq:kkt_gap} matches the primal component of $\|\mathfrak g(x,y)\|$ and measures the stationarity; the latter two terms quantify primal infeasibility and complementarity gap. It is obvious that $(x,y)$ is a KKT point of problem \eqref{eq:constrained_problem} when $\mathcal{K}(x,y)=0$. We provide the following lemma to characterize the relationship between $\|\mathfrak{g}(x,y)\|$ and the KKT gap, whose proof is provided in Appendix \ref{app:proof_of_lemma_stk}.
\begin{lemma}\label{lemma:stationary_to_kkt}
Suppose that $(x,y)\in\mathbb{R}^{d_x}\times\mathcal{Y}$ is any point satisfying $y< \overline{y}$ component-wise. Then, under Assumption \ref{assump:lipschitz}, there exists some constant $C>0$ such that $\mathcal{K}(x,y)\leq C\|\mathfrak{g}(x,y)\|$.
\end{lemma}

Lemma \ref{lemma:stationary_to_kkt} implies that the KKT gaps of our algorithms share the same convergence rate as the stationarity measure $\|\mathfrak{g}(x,y)\|$ under mild conditions.
The condition $y< \overline{y}$, in fact, requires that the iterates strictly stay below the upper bound of $\mathcal{Y}$. This is a common and fundamental limitation in the analysis of GDA-type algorithms \citep{nedic2009subgradient,liu2020min,xu2023unified}, which we believe is an open problem to be addressed in future work.

Moreover, we can extend block updates to broader problem settings. In our problem (\ref{eq:constrained_problem}), we deal with all constraints in the general form $c_j(x)\leq 0$. If equality constraints $c_j(x)=0$ have to be considered, we can incorporate them by adding two inequalities $-c_j(x)\leq 0$ and $c_j(x)\leq 0$. Besides, we can also consider some simple constraints by constraining the feasible space directly:
$$    \min_{x\in \mathcal{X}}\  h(x)\qquad \text{s.t.}\; c_j(x)\leq 0,\; \forall j\in\mathcal{J},$$
and deal with $x\in\mathcal{X}$ by projection, i.e., $x_{k+1}=\mathcal{P}_{\mathcal{X}}[x_k-\alpha G_x^{\mathcal{I}_k}(x_k,y_k)]$. We can establish the same theoretical results in our analysis when $\mathcal{X}$ is \textit{convex} and \textit{decomposable}, i.e., $\mathcal{X}=\prod_{i\in [d_x]} \mathcal{X}_i$. This requirement originates from the fundamental limit of block/coordinate updates \citep{lian2016comprehensive, jin2026zeroth}. Note that we need to make the modifications: $\mathfrak{g}_x(x,y)=\frac{1}{\alpha}\left(x-\mathcal{P}_\mathcal{X}[x-\alpha\nabla_x f(x,y)]\right)$, and $\Phi_{1/2L}(x) = \min_{u \in \mathcal{X}} \left\{\Phi(u) + L \|u - x\|^2 \right\}$ for the stationarity measure.
The extended theoretical results are straightforward to establish based on our analysis and the non-expansiveness of projection operators; thus, we omit the detailed analysis in this study.

Our methods can also be applied to stochastic settings, where $h(x)=\mathbb{E}[h(x;\xi)]$ and $c_j(x)=\mathbb{E}[c_j(x;\xi)]$, i.e., $\xi$ is a random variable defined on a probability space. However, we should not expect stronger convergence guarantees than those achieved by RGE-based algorithms, since the randomness of $\xi$ introduces additional variance into BCGEs, which weakens the benefit of low variance offered by our methods.

\section{Numerical Simulations}
In this section, we validate our algorithms through numerical experiments on an energy management problem and a high-dimensional parameter optimization problem. The detailed problem formulations and experimental settings are provided in Appendix D.

\subsection{Energy Management Problem}
We consider an energy management problem in a 141-bus distribution network with $d_x=168$ \citep{khodr2008maximum,zhou2025zeroth}. In this problem, the goal is to adjust the load of multiple users within a distribution network to curtail a specific amount of load while minimizing the cost of load adjustment. The problem is formulated as
\begin{align*}
    \min_{x\in \mathcal{X}} h(x)\qquad \text{s.t.}\; p_c(x)\leq D,
\end{align*}
where $x\in\mathbb{R}^{168}$ includes the active and reactive power loads of all buses. $h(x)$ is the objective function concerning operation cost and system safety; $p_c(x)$ is the total power exchange with the main grid after curtailment; $D$ is the target power level. Both the objective and constraint functions can only be computed via observations without analytical models.

\begin{figure}[htb]
    \centering
    \subfloat[]{   \includegraphics[scale=0.3]{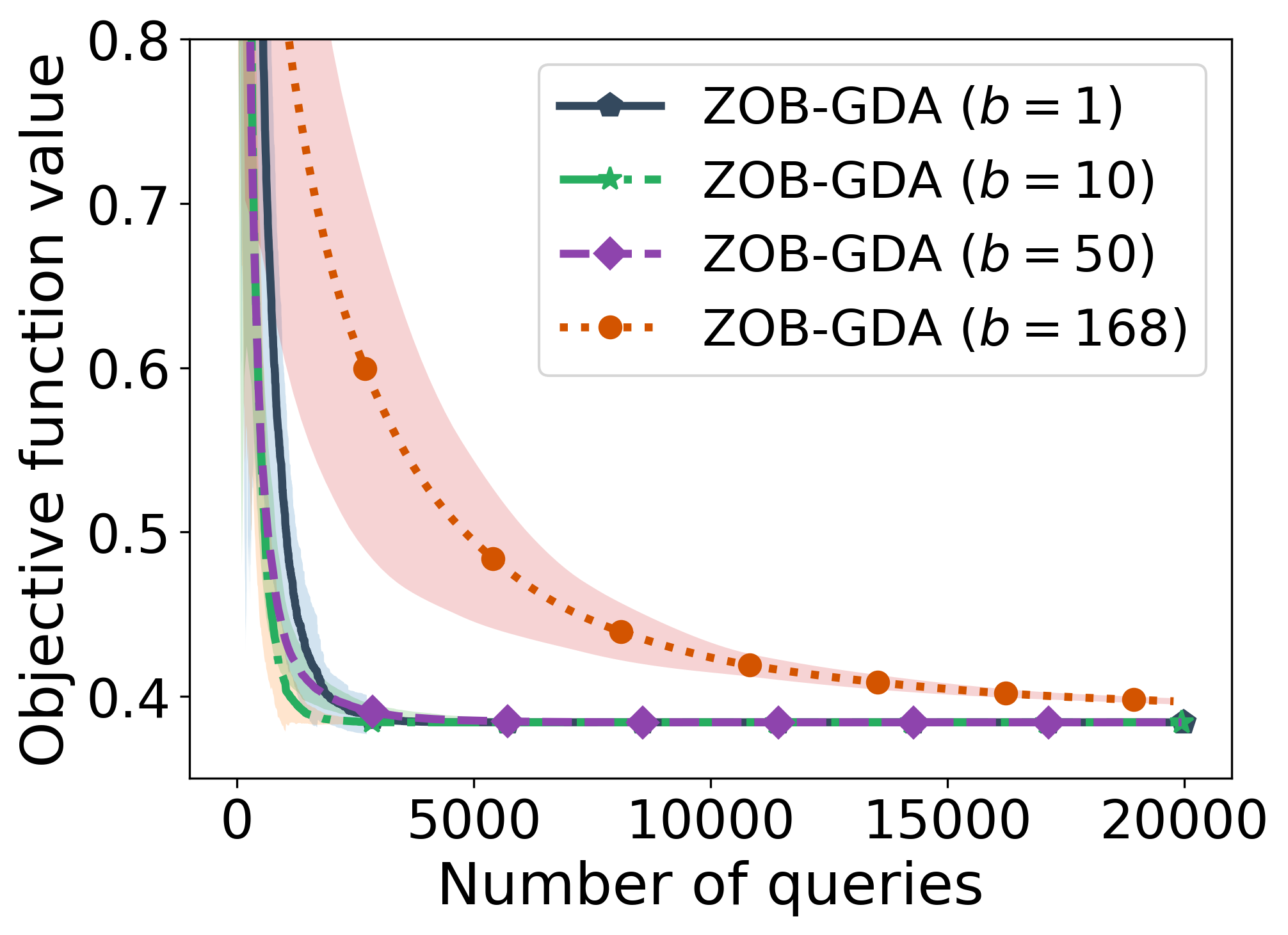}
          \label{figure nonconvex_obj_ZOBGDA}
	}
    \subfloat[]{   \includegraphics[scale=0.3]{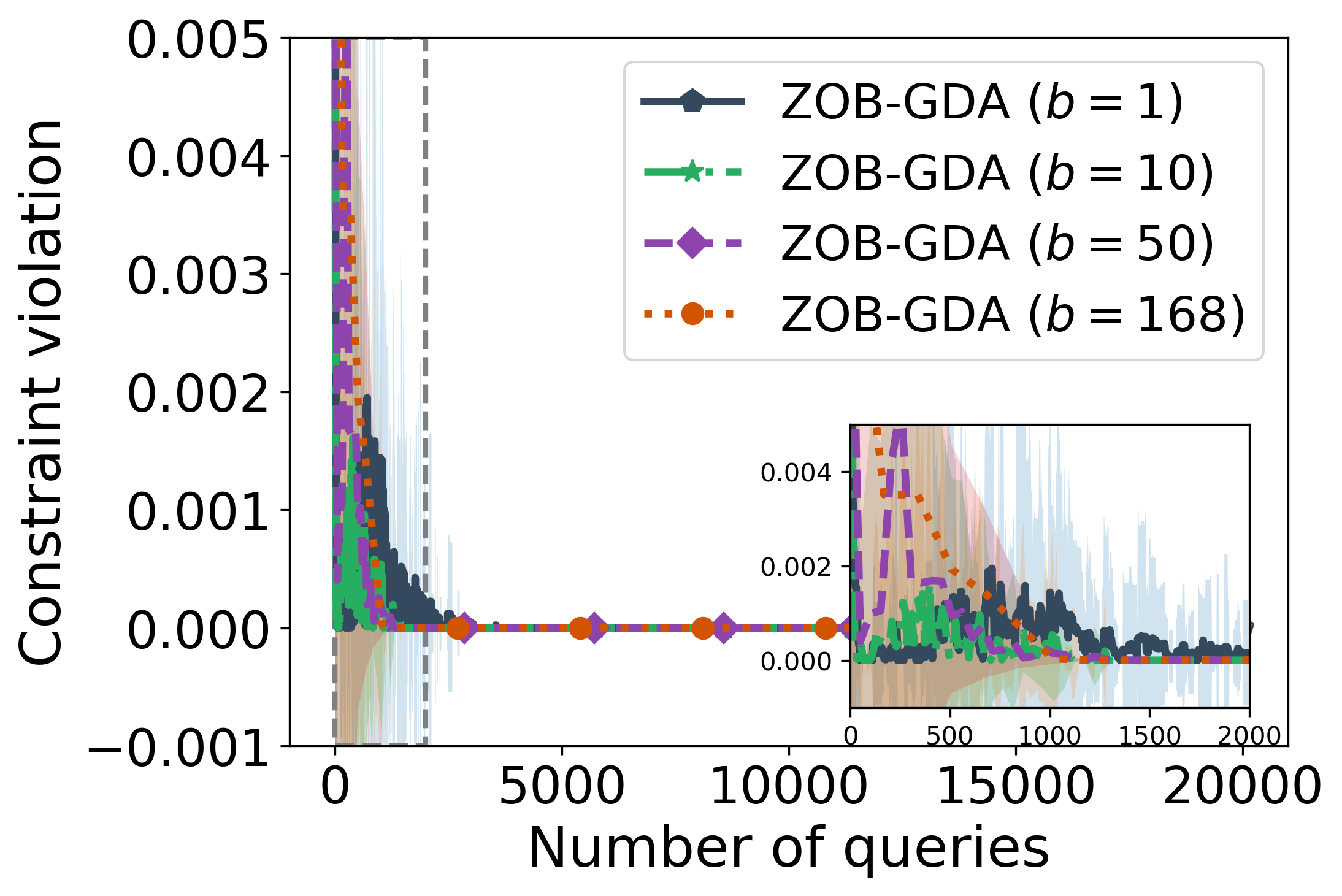}
          \label{figure nonconvex_vio_ZOBGDA}
	}
    \subfloat[]{   \includegraphics[scale=0.3]{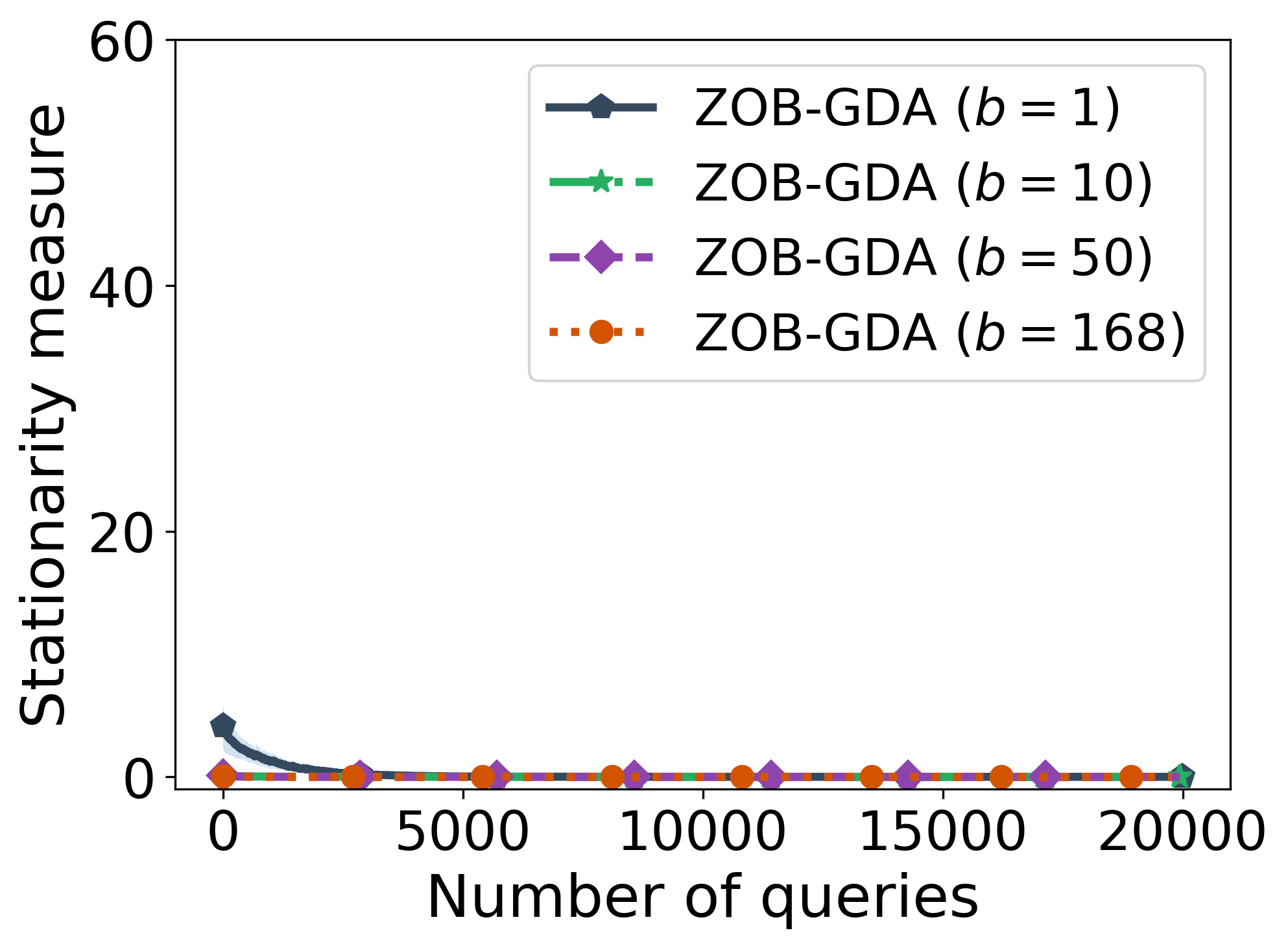}
          \label{figure nonconvex_sta_ZOBGDA}
	}
    \quad
    \subfloat[]{   \includegraphics[scale=0.3]{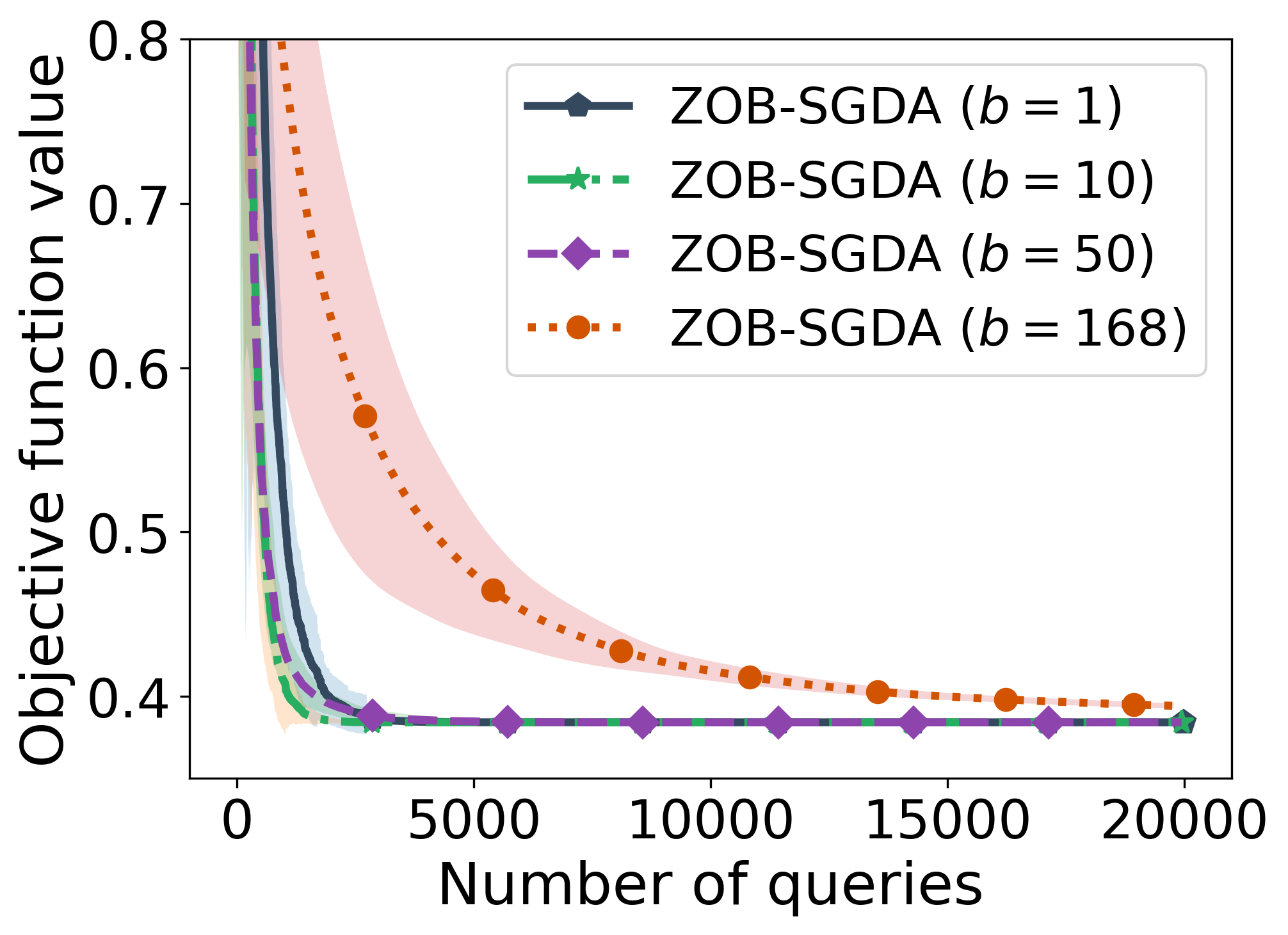}
          \label{figure nonconvex_obj_ZOB-SGDA}
	}
    \subfloat[]{   \includegraphics[scale=0.3]{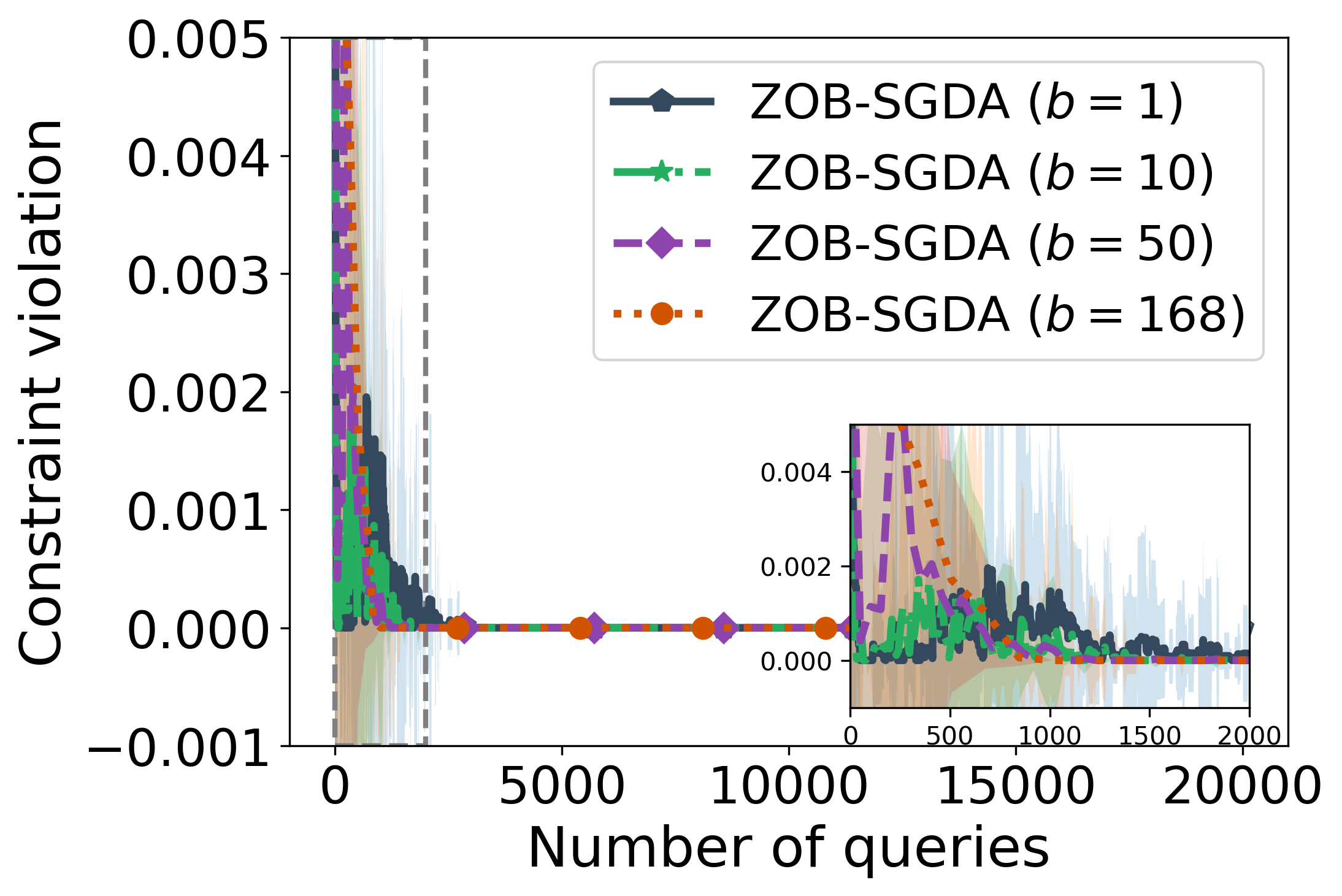}
          \label{figure nonconvex_vio_ZOB-SGDA}
	}
    \subfloat[]{   \includegraphics[scale=0.3]{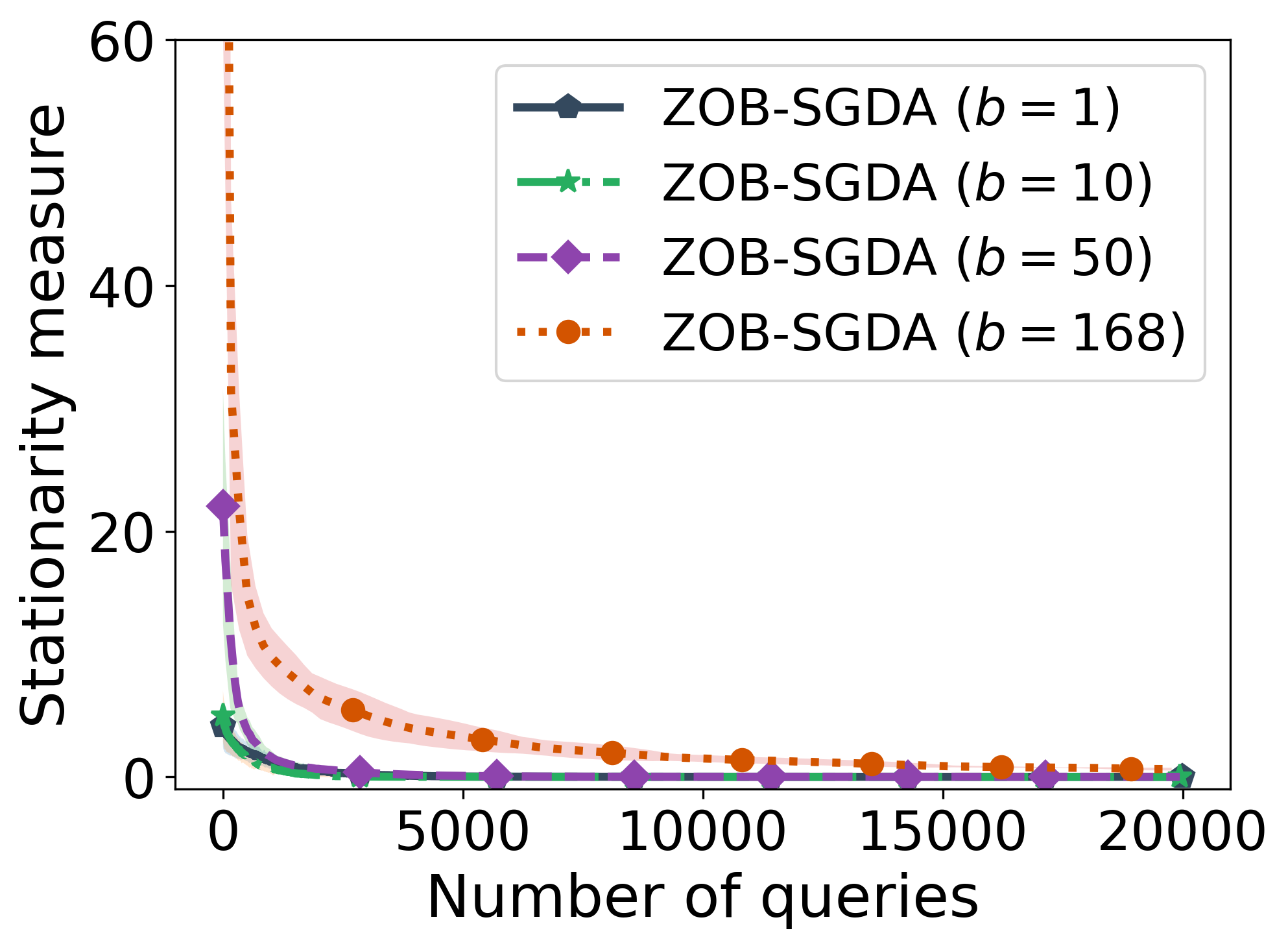}
          \label{figure nonconvex_sta_ZOB-SGDA}
	}
    \caption{Performance of ZOB-GDA and ZOB-SGDA in the energy management problem. (a), (b), and (c) present the objective function value, constraint violation, and stationarity measure of ZOB-GDA. (d), (e), and (f) present corresponding results for ZOB-SGDA}
    \label{fig:proposed}
\end{figure}

First, we apply ZOB-GDA and ZOB-SGDA to solve the problem using different block sizes. Their performance is averaged over 50 repeated runs with different initial points and is shown in Figure \ref{fig:proposed}. The dark curves represent the average performance, and the shaded areas represent the standard deviation. The results show that both ZOB-GDA and ZOB-SGDA can converge to the same objective function value with the constraint satisfied. Their stationarity measures can both converge to 0, which validates our theoretical guarantees. While different block sizes lead to convergence to the same objective function value, properly selecting the block sizes may substantially reduce the total number of queries required. Besides, we test ZOB-SGDA with/without parallel techniques (10 workers) for $b$ directional estimations to compare the wall-clock time to achieve different levels of relative errors (REs) and zero constraint violation (CV), as shown in Table \ref{tab:parallel_comparison}. The results indicate that parallelism only provides limited speedup when $b$ is large and, in many cases, even leads to worse wall-clock performance for ZOB-SGDA. It may stem from the fact that each function evaluation is inexpensive, and the additional time for parallel processing offsets the saved time due to parallel computing.
\begin{table}[htbp]
\centering
\setlength{\abovecaptionskip}{4pt}
\setlength{\belowcaptionskip}{4pt}
\caption{Computational Time of ZOB-SGDA under Parallel and Non-parallel Settings (Unit: seconds)}
\label{tab:parallel_comparison}
% \small 
\renewcommand{\arraystretch}{1.05}
\setlength{\tabcolsep}{4pt}

\begin{tabular}{c cc cc cc cc}
\toprule
& \multicolumn{2}{c}{$b=1$}
& \multicolumn{2}{c}{$b=10$}
& \multicolumn{2}{c}{$b=50$}
& \multicolumn{2}{c}{$b=168$} \\
\cmidrule(lr){2-3}\cmidrule(lr){4-5}\cmidrule(lr){6-7}\cmidrule(lr){8-9}
RE
& Parallel & Non-parallel
& Parallel & Non-parallel
& Parallel & Non-parallel
& Parallel & Non-parallel \\
\midrule
10\%  & 19.38 & 6.99  & 4.19 & 1.12          & 3.34 & \textbf{0.71} & 4.84  & 4.95 \\
1\%   & 31.88 & 11.82 & 5.87 & \textbf{1.75} & 6.06 & 1.94          & 17.04 & 17.28 \\
0.1\% & 42.57 & 15.89 & 6.93 & \textbf{2.22} & 7.43 & 2.66          & 22.34 & 24.92 \\
\bottomrule
\end{tabular}
\end{table}

\begin{figure}[htbp]
    \centering
    \subfloat[]{   \includegraphics[scale=0.3]{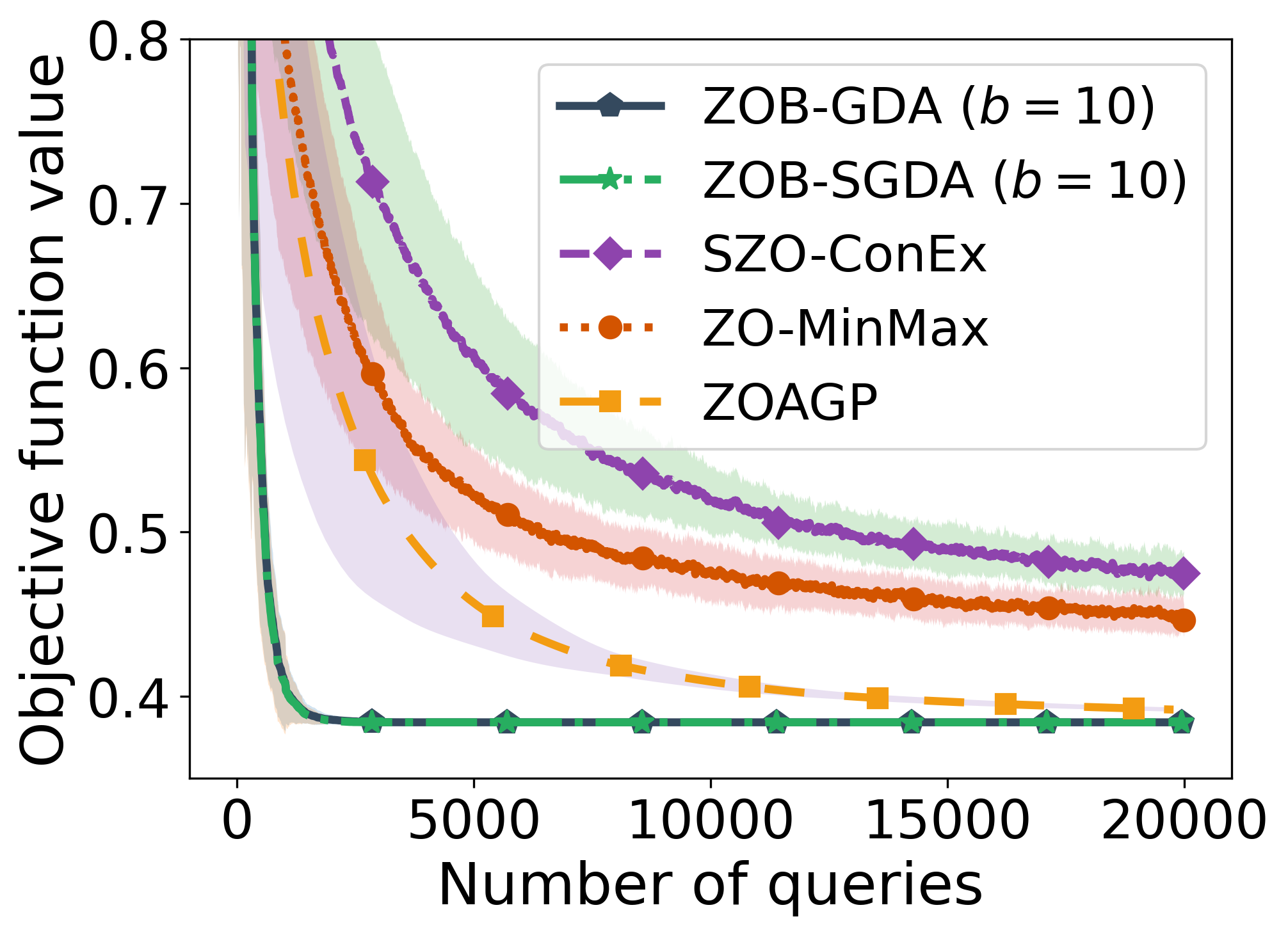}
          \label{figure nonconvex_obj_benchmark}
	}
    \subfloat[]{   \includegraphics[scale=0.3]{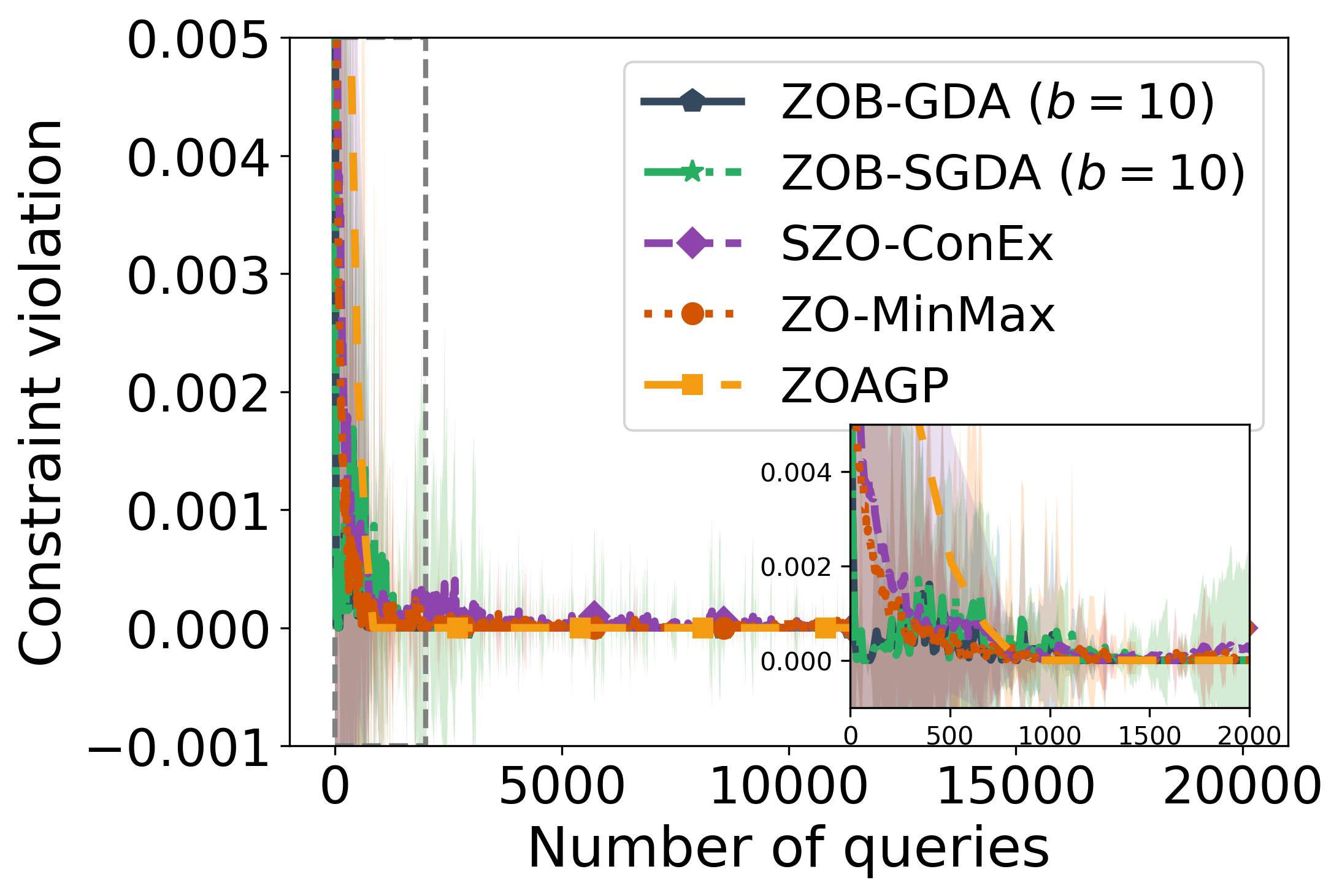}
          \label{figure nonconvex_vio_benchmark}
	}
    \subfloat[]{   \includegraphics[scale=0.3]{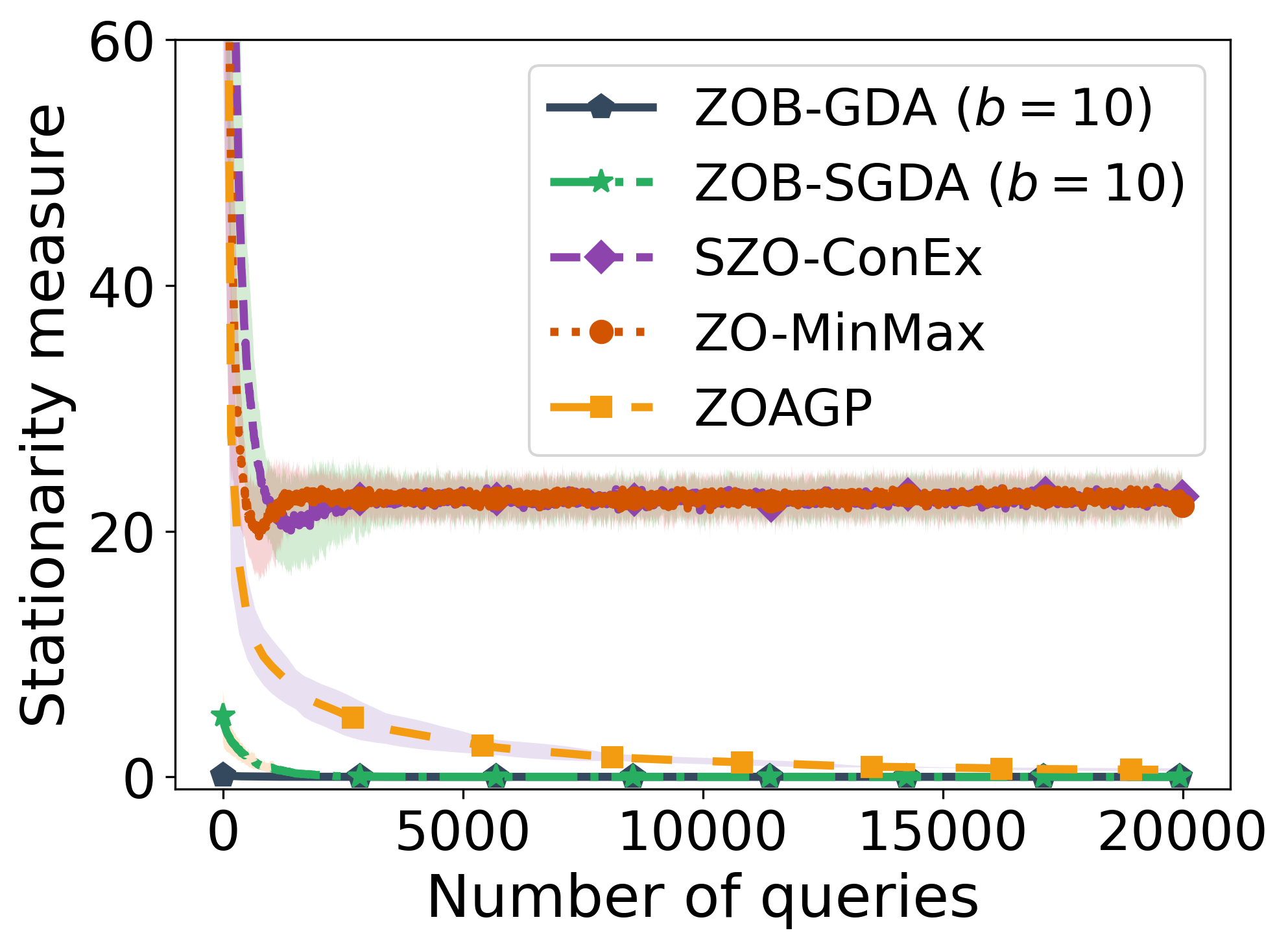}
          \label{figure nonconvex_sta_benchmark}
	}
    \caption{Performance comparison of different algorithms in the energy management problem}
    \label{fig:compare}
\end{figure}

\begin{table}[htbp]
  \centering
  \caption{Average numbers of iterations and queries required to generate certain solutions in the energy management problem (``NaN'' means no runs can achieve such a solution)}
  \label{tab:compare}

  % \small
  \setlength{\tabcolsep}{3pt}
  \renewcommand{\arraystretch}{1.05}

  \begin{tabular}{l c cc cc cc}
    \toprule
    \multicolumn{2}{c}{RE}
    & \multicolumn{2}{c}{10\%}
    & \multicolumn{2}{c}{1\%}
    & \multicolumn{2}{c}{0.1\%} \\
    \cmidrule(lr){3-4}\cmidrule(lr){5-6}\cmidrule(lr){7-8}
    \multicolumn{2}{c}{}
    & Iteration & Queries
    & Iteration & Queries
    & Iteration & Queries \\
    \midrule

    \multirow{4}{*}{ZOB-GDA}
      & $b=1$   & 722.06  & 1444.12  & 1213.48  & 2426.96  & 1584.84  & 3169.68 \\
      & $b=10$  & 75.00   & 825.00   & 138.08   & 1518.88  & 182.00   & 2002.00 \\
      & $b=50$  & 24.32 & 1240.32  & 68.02 & 3469.02  & 92.24 & 4704.24 \\
      & $b=168$ & 60.06   & 10150.14  & 216.68   & 36618.92 & 285.90   & 48317.10 \\
    \addlinespace[2pt]

    \multirow{4}{*}{ZOB-SGDA}
      & $b=1$   & 722.80  & 1445.60  & 1195.94  & 2391.88  & 1594.32  & 3188.64 \\
      & $b=10$  & 74.00   & \textbf{814.00}   & 131.90   & \textbf{1450.90}  & 169.68   & \textbf{1866.48}\\
      & $b=50$  & \textbf{22.18}  & 1131.18  & \textbf{58.26}    & 2971.26  & \textbf{77.76}    & 3965.76 \\
      & $b=168$ & 52.06   & 8798.14  & 187.42   & 31673.98 & 266.59   & 45054.15 \\
    \addlinespace[2pt]
    \midrule

    ZO-MinMax &  & 12535.10 & 25070.20 & 12771.33 & 25542.65 & NaN & NaN \\
    SZO-ConEx &  & 12817.62 & 51270.49 & NaN & NaN & NaN & NaN \\
    ZOAGP     &  & 45.14 & 7628.66 & 162.82 & 27516.58 & 255.96 & 43257.24 \\
    \bottomrule
  \end{tabular}
\end{table}

We also compare our algorithms (block size $b=10$) with three others, i.e., ZO-MinMax \citep{liu2020min}, SZO-ConEx \citep{nguyen2023stochastic}, and ZOAGP \citep{xu2024derivative}, which can be applied to solve problem (\ref{eq:constrained_problem}). Each algorithm is tested with 50 runs, and the average performance is presented in Figure \ref{fig:compare}. The results show that our algorithms can both converge to a solution significantly faster than others. To better compare their query complexities, we present the average number of queries required to generate solutions with zero CV and different REs, as shown in Table \ref{tab:compare}. ZOB-GDA and ZOB-SGDA exhibit highly similar performance under different block sizes, while the complexity bound of the latter is theoretically tighter. Notably, proper block sizes may lead to much improved query complexities (about 10 times better than existing methods). Moreover, SZO-ConEx and ZO-MinMax have much worse performance due to the large variance of RGEs.

Based on the numerical results, we highlight two key observations. First, although all choices of block size $b$ share the same query complexity bound, their empirical convergence behaviors can differ substantially. This is because our query complexity bounds provide only worst-case guarantees, whereas in practice, the algorithms may perform significantly better with an appropriately chosen $b$. We recommend starting with a small block size and tuning by early stopping runs until a good empirical trade-off between single-step cost and convergence speed is observed. Second, ZOB-GDA exhibits performance comparable to that of ZOB-SGDA in our experiments, despite the fact that ZOB-SGDA enjoys a strictly better query complexity bound. This observation again suggests that the theoretical results are conservative and may not fully capture the typical behavior encountered in practice. The situation is similar to the first-order setting, where GDA is theoretically dominated by its variants, such as extra-gradient methods, yet often performs competitively and can enjoy strong empirical performance except in certain pathological cases.

% In practice, observations from real systems are often noisy, which can influence the accuracy of gradient estimations. We further test our algorithms under noisy settings to validate their robustness. The test results are provided in Appendix D.3.

\subsection{High-Dimensional Parameter Optimization}
We further test our algorithms in a 1000-dimensional parameter optimization problem to validate the capability of our methods for scalable problems. We consider the following parameter optimization problem:
\begin{align*}
\min_{x\in\mathbb{R}^{1000}} &h(x)=0.5\cdot \|Bx\|^2+0.1\cdot\sum_{i}x^4(i)\\
\text{s.t.}\ & c(x)=(1+e^{-q^Tx})^{-1}-\Bar{c}\leq 0,
\end{align*}
where $B\in\mathbb{R}^{1000\times 1000}$ is a matrix, $q\in\mathbb{R}^{1000}$ is a vector, and $\Bar{c}$ is a scalar. The detailed parameter settings for the problem and algorithms are provided in Appendix D. By the early-stopping rule, we set the block size $b=30$ for our algorithms, which yields good performance.

\begin{figure}[htbp]
    \centering
    \subfloat[]{   \includegraphics[scale=0.3]{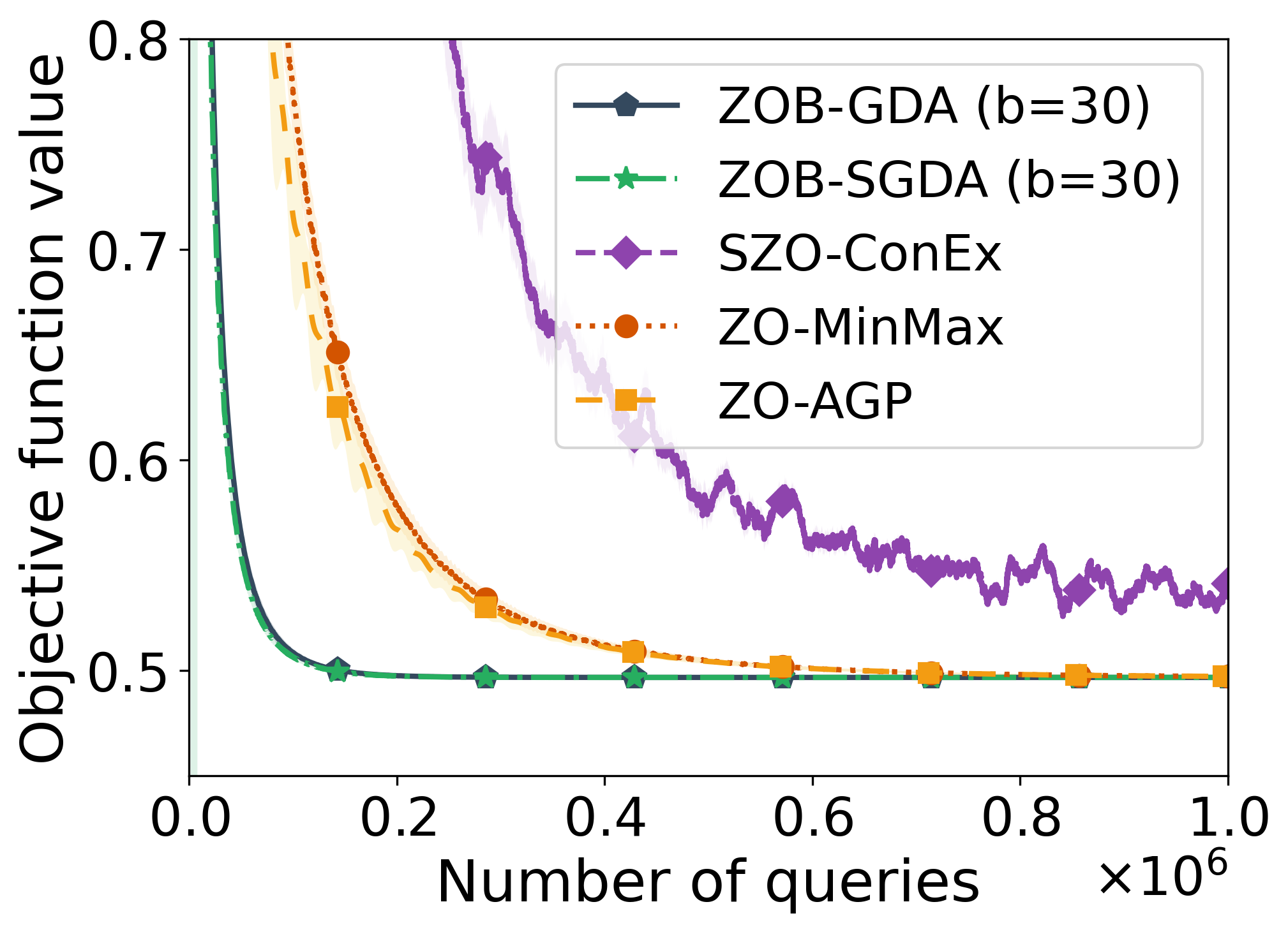}
          \label{figure nonconvex_obj_benchmark_high}
	}
    \subfloat[]{   \includegraphics[scale=0.3]{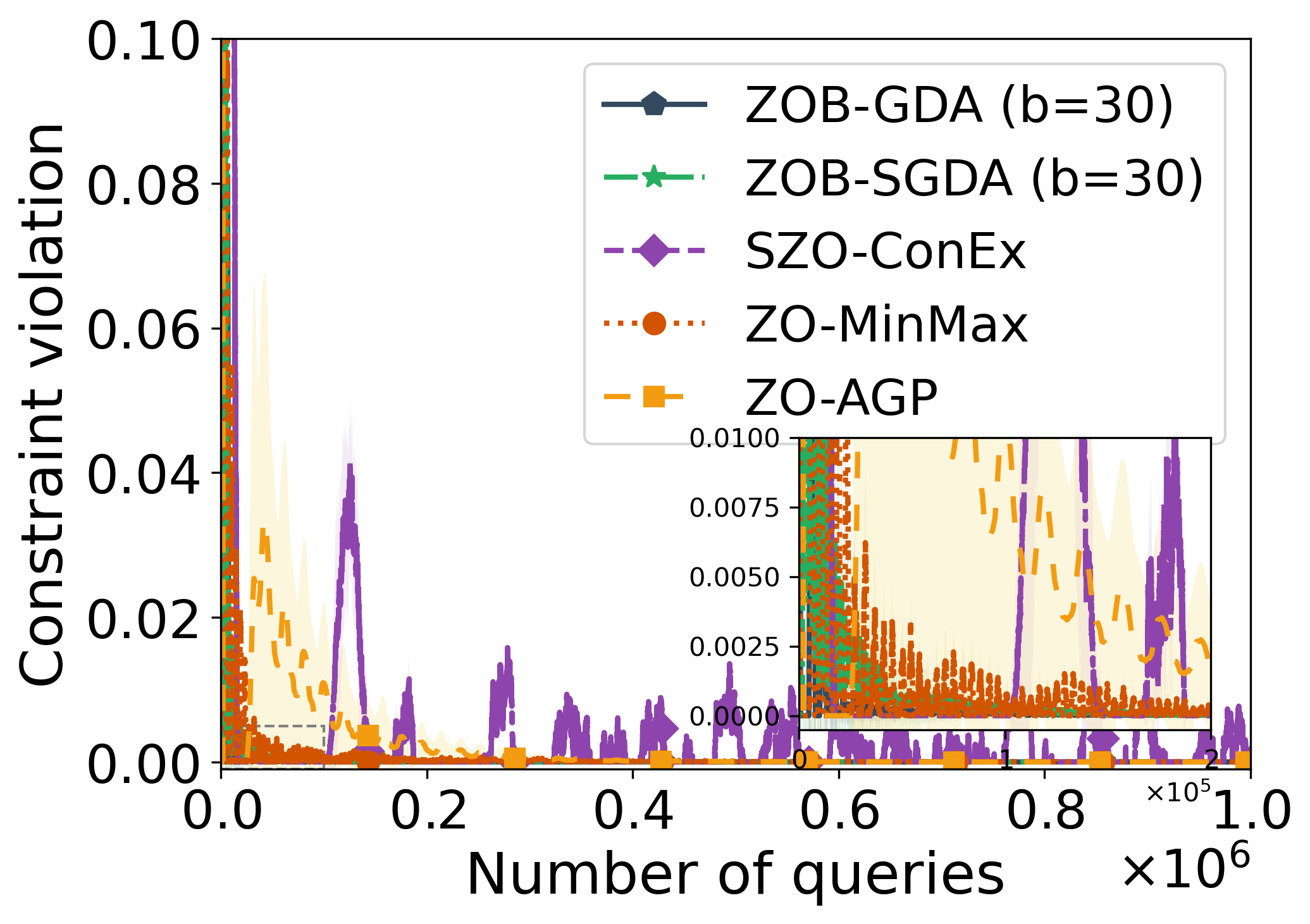}
          \label{figure nonconvex_vio_benchmark_high}
	}
    \subfloat[]{   \includegraphics[scale=0.3]{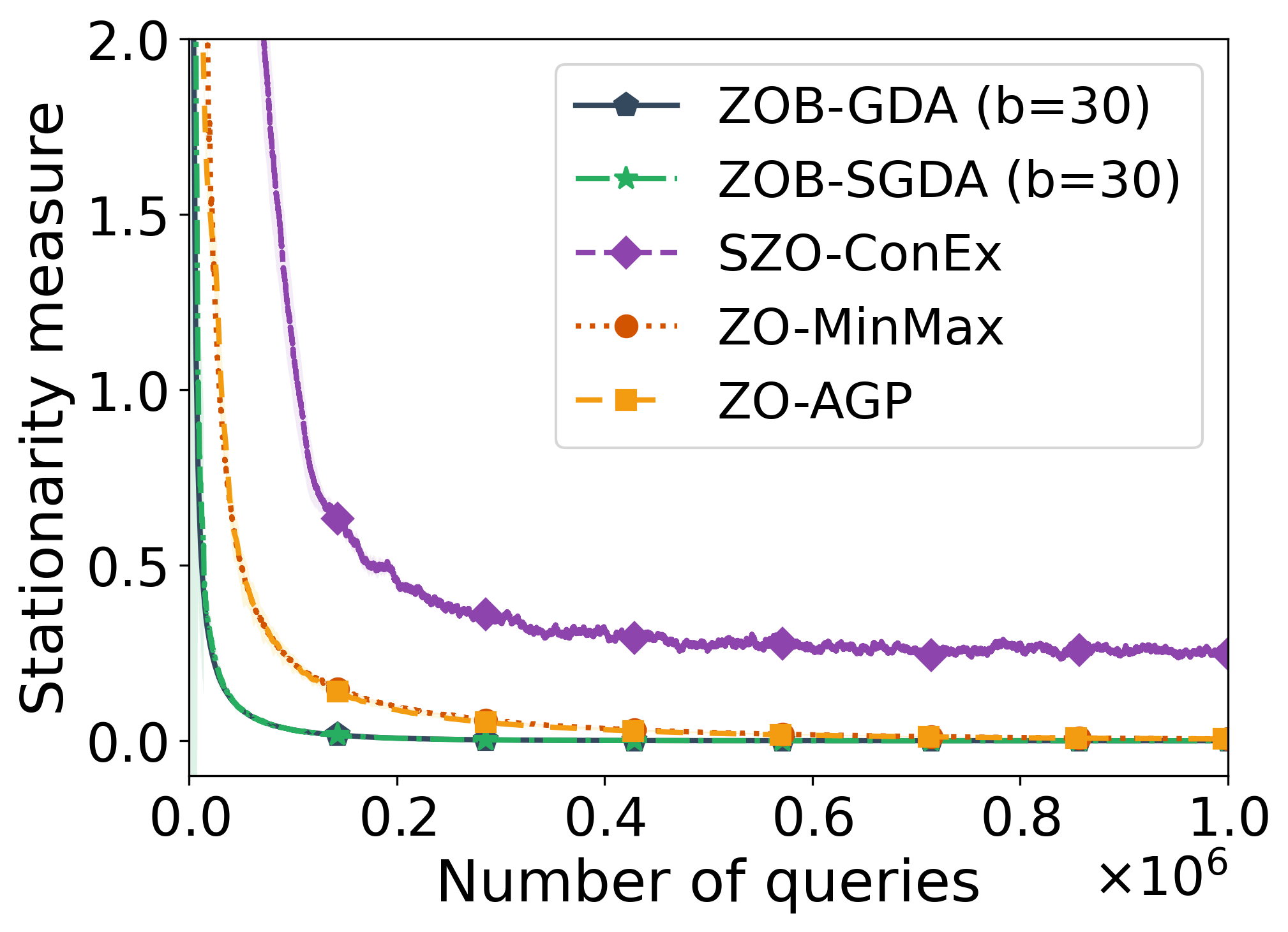}
          \label{figure nonconvex_sta_benchmark_high}
	}
    \caption{Performance comparison of different algorithms in the parameter optimization problem}
    \label{fig:high_compare}

\end{figure}

\begin{table}[htbp]
  \centering
  \caption{Average numbers of queries required to generate certain solutions in the parameter optimization problem (``NaN'' means no runs can achieve such a solution)}
  \label{tab:high_compare}

  % \small
  \setlength{\tabcolsep}{4pt}
  \renewcommand{\arraystretch}{1.05}

  \begin{tabular}{l cc cc cc}
    \toprule
    RE / CV
      & \multicolumn{2}{c}{10\% / 0.1}
      & \multicolumn{2}{c}{1\% / 0.01}
      & \multicolumn{2}{c}{0.1\% / 0.001} \\
    \cmidrule(lr){2-3}\cmidrule(lr){4-5}\cmidrule(lr){6-7}
      & Iteration & Queries
      & Iteration & Queries
      & Iteration & Queries \\
    \midrule
    ZOB-GDA ($b=30$)  & 1853.00  & 57443.00  & 4081.70  & 126532.70  & 6321.30 & 195960.30 \\
    ZOB-SGDA ($b=30$) & 1704.10  & \textbf{52827.10}  &  3795.55  & \textbf{117662.05}  & 5876.90 & \textbf{182183.90} \\
    \addlinespace[2pt]
    \midrule
    ZO-MinMax & 125127.85 &  250255.70  & 279401.10 & 558802.20 & 436177.55 & 872355.10  \\
    SZO-ConEx & 172631.15 & 690525.60 & NaN & NaN & NaN & NaN \\
    ZOAGP     & \textbf{234.00}     &  234234.00  & \textbf{556.45} & 557006.45 & \textbf{887.30} & 888187.30 \\
    \bottomrule
  \end{tabular}
\end{table}

% \begin{table}[htbp]
%   \centering
%   \caption{Average numbers of queries required to generate solutions with certain levels of relative errors and constraint violation (``NaN'' means no runs can achieve such a solution)}
%   \label{tab:high_compare}
%   \renewcommand{\arraystretch}{0.9}
%   % \resizebox{\textwidth}{!}{
%     \begin{tabular}{ccccccc}
%     \toprule
%     RE / CV & \multicolumn{2}{c}{10\% / 0.1} & \multicolumn{2}{c}{1\% / 0.01} & \multicolumn{2}{c}{0.1\% / 0.001} \\
% \cmidrule{2-7}          & Iteration & Complexity & Iteration & Complexity & Iteration & Complexity \\
%     \midrule
%     ZOB-GDA (b=30) & \textbf{26.03}  & \textbf{1587.60}  & \textbf{63.10}  & \textbf{3849.35}  & \textbf{103.25}  & \textbf{6298.35}  \\
%     ZOB-SGDA (b=30) & 30.94  & 1887.20  & 75.68  & 4616.65  & 124.73  & 7608.80  \\
%     \midrule
%     ZO-MinMax & 24901.45  & 49802.90  & 56478.83  & 112957.65  & 87149.80  & 174299.60  \\
%     \midrule
%     SZO-ConEx & 30046.76  & 120187.05  & NaN   & NaN   & NaN   & NaN \\
%     \midrule
%     ZOAGP & 0.16  & 313.10  & NaN   & NaN   & NaN   & NaN \\
%     \bottomrule

%     \end{tabular}
%     % }
% \end{table}

The results are summarized in Figure \ref{fig:high_compare} and Table \ref{tab:high_compare}. Except for SZO-ConEx, all algorithms can generate solutions that meet the prescribed levels of RE and CV, whereas our algorithms converge substantially faster in terms of query efficiency. Notably, the performance of ZOB-SGDA is slightly better than ZOB-GDA in number of queries. In contrast, SZO-ConEx fails to generate high-accuracy solutions, and its performance is significantly worse than that of our algorithms. Interestingly, although ZOAGP and ZO-MinMax apply CGEs and RGEs, respectively, they exhibit similar performance. This suggests that the CGE-based algorithm does not necessarily converge faster in high-dimensional cases, even though CGEs typically enjoy lower variance.

\section{Conclusion}\label{sec:Conclusion}
In this research, we study a general optimization problem with black-box constraints. We reformulate it as a min-max problem, and then apply zeroth-order optimization (ZO) methods to solve it using the input-output information. Specifically, by integrating block updates with gradient descent ascent (GDA), we develop two novel algorithms, called ZOB-GDA and ZOB-SGDA, which achieve efficiency in both single-step gradient estimation and the overall query complexity. Our theoretical results demonstrate that ZOB-GDA achieves the same query complexity bound as its first-order counterpart with an additional dimension-dependent factor, and ZOB-SGDA enjoys the best-known complexity bound. In addition, our numerical experiments validate their superior performance. However, our work on block updates in constrained ZO is just a beginning. There are still several open challenges. First, while the block update framework is a broadly applicable technique for improving single-step efficiency, its integration with other primal–dual algorithms requires more study. Second, although we anticipate that the benefits of block updates in stochastic constrained ZO will be more limited than in deterministic settings, rigorous validation requires further investigation.

\bibliographystyle{apalike}
\bibliography{reference}

@article{berahas2022theoretical,
  title={A theoretical and empirical comparison of gradient approximations in derivative-free optimization},
  author={Berahas, Albert S and Cao, Liyuan and Choromanski, Krzysztof and Scheinberg, Katya},
  journal={Foundations of Computational Mathematics},
  volume={22},
  number={2},
  pages={507--560},
  year={2022},
  publisher={Springer}
}

@InProceedings{liu2020min,
  title = 	 {Min-Max Optimization without Gradients: Convergence and Applications to Black-Box Evasion and Poisoning Attacks},
  author =       {Liu, Sijia and Lu, Songtao and Chen, Xiangyi and Feng, Yao and Xu, Kaidi and Al-Dujaili, Abdullah and Hong, Mingyi and O'Reilly, Una-May},
  booktitle = 	 {Proceedings of the 37th International Conference on Machine Learning},
  pages = 	 {6282--6293},
  year = 	 {2020},
  volume = 	 {119},
  publisher =    {PMLR}
}

@article{xu2024derivative,
  title={Derivative-free alternating projection algorithms for general nonconvex-concave minimax problems},
  author={Xu, Zi and Wang, Ziqi and Shen, Jingjing and Dai, Yuhong},
  journal={SIAM Journal on Optimization},
  volume={34},
  number={2},
  pages={1879--1908},
  year={2024},
  publisher={SIAM}
}

@article{wang2023zeroth,
  title={Zeroth-order algorithms for nonconvex--strongly-concave minimax problems with improved complexities},
  author={Wang, Zhongruo and Balasubramanian, Krishnakumar and Ma, Shiqian and Razaviyayn, Meisam},
  journal={Journal of Global Optimization},
  volume={87},
  number={2},
  pages={709--740},
  year={2023},
  publisher={Springer}
}

@article{davis2019stochastic,
  title={Stochastic model-based minimization of weakly convex functions},
  author={Davis, Damek and Drusvyatskiy, Dmitriy},
  journal={SIAM Journal on Optimization},
  volume={29},
  number={1},
  pages={207--239},
  year={2019},
  publisher={SIAM}
}

@article{nedic2009subgradient,
  title={Subgradient methods for saddle-point problems},
  author={Nedi{\'c}, Angelia and Ozdaglar, Asuman},
  journal={Journal of Optimization Theory and Applications},
  volume={142},
  pages={205--228},
  year={2009},
  publisher={Springer}
}

@book{fu2015handbook,
  title={Handbook of simulation optimization},
  author={Fu, Michael C and others},
  volume={216},
  year={2015},
  publisher={Springer}
}

@article{jin2026zeroth,
  title={Zeroth-Order Feedback-Based Optimization for Distributed Energy Management},
  author={Jin, Ruiyang and Tang, Yujie and Song, Jie},
  journal={European Journal of Operational Research},
  year={2026},
  publisher={Elsevier}
}

@article{liu2020primer,
  title={A primer on zeroth-order optimization in signal processing and machine learning: Principals, recent advances, and applications},
  author={Liu, Sijia and Chen, Pin-Yu and Kailkhura, Bhavya and Zhang, Gaoyuan and Hero III, Alfred O and Varshney, Pramod K},
  journal={IEEE Signal Processing Magazine},
  volume={37},
  number={5},
  pages={43--54},
  year={2020},
  publisher={IEEE}
}

@inproceedings{malladi2023fine,
  title={Fine-tuning language models with just forward passes},
  author={Malladi, Sadhika and Gao, Tianyu and Nichani, Eshaan and Damian, Alex and Lee, Jason D and Chen, Danqi and Arora, Sanjeev},
  booktitle={Proceedings of the 37th International Conference on Neural Information Processing Systems},
  pages={53038--53075},
 publisher = {Curran Associates, Inc.},
 volume = {36},
 year = {2023}
}

@book{nesterov2018lectures,
  title={Lectures on convex optimization},
  author={Nesterov, Yurii},
  volume={137},
  year={2018},
  publisher={Springer}
}

@article{nemirovski2004prox,
  title={Prox-method with rate of convergence O (1/t) for variational inequalities with Lipschitz continuous monotone operators and smooth convex-concave saddle point problems},
  author={Nemirovski, Arkadi},
  journal={SIAM Journal on Optimization},
  volume={15},
  number={1},
  pages={229--251},
  year={2004},
  publisher={SIAM}
}

@InProceedings{lin2020gradient,
  title = 	 {On Gradient Descent Ascent for Nonconvex-Concave Minimax Problems},
  author =       {Lin, Tianyi and Jin, Chi and Jordan, Michael},
  booktitle = 	 {Proceedings of the 37th International Conference on Machine Learning},
  pages = 	 {6083--6093},
  year = 	 {2020},
  volume = 	 {119},
  publisher =    {PMLR}
}

@article{xu2023unified,
  title={A unified single-loop alternating gradient projection algorithm for nonconvex--concave and convex--nonconcave minimax problems},
  author={Xu, Zi and Zhang, Huiling and Xu, Yang and Lan, Guanghui},
  journal={Mathematical Programming},
  volume={201},
  number={1},
  pages={635--706},
  year={2023},
  publisher={Springer}
}

@article{nguyen2023stochastic,
  title={Stochastic zeroth-order functional constrained optimization: Oracle complexity and applications},
  author={Nguyen, Anthony and Balasubramanian, Krishnakumar},
  journal={INFORMS Journal on Optimization},
  volume={5},
  number={3},
  pages={256--272},
  year={2023},
  publisher={INFORMS}
}

@article{latafat2019new,
  title={A new randomized block-coordinate primal-dual proximal algorithm for distributed optimization},
  author={Latafat, Puya and Freris, Nikolaos M and Patrinos, Panagiotis},
  journal={IEEE Transactions on Automatic Control},
  volume={64},
  number={10},
  pages={4050--4065},
  year={2019},
  publisher={IEEE}
}

@article{nesterov2017random,
  title={Random gradient-free minimization of convex functions},
  author={Nesterov, Yurii and Spokoiny, Vladimir},
  journal={Foundations of Computational Mathematics},
  volume={17},
  number={2},
  pages={527--566},
  year={2017},
  publisher={Springer}
}

@article{duchi2015optimal,
  title={Optimal rates for zero-order convex optimization: The power of two function evaluations},
  author={Duchi, John C and Jordan, Michael I and Wainwright, Martin J and Wibisono, Andre},
  journal={IEEE Transactions on Information Theory},
  volume={61},
  number={5},
  pages={2788--2806},
  year={2015},
  publisher={IEEE}
}

@InProceedings{pmlr-v235-zhang24ad,
  title = 	 {Revisiting Zeroth-Order Optimization for Memory-Efficient {LLM} Fine-Tuning: A Benchmark},
  author =       {Zhang, Yihua and Li, Pingzhi and Hong, Junyuan and Li, Jiaxiang and Zhang, Yimeng and Zheng, Wenqing and Chen, Pin-Yu and Lee, Jason D. and Yin, Wotao and Hong, Mingyi and Wang, Zhangyang and Liu, Sijia and Chen, Tianlong},
  booktitle = 	 {Proceedings of the 41st International Conference on Machine Learning},
  pages = 	 {59173--59190},
  year = 	 {2024},
  volume = 	 {235},
  publisher =    {PMLR}
}

@ARTICLE{hu2023gradient,
  author={Hu, Chuanhao and Zhang, Xuan and Wu, Qiuwei},
  journal={IEEE Transactions on Smart Grid}, 
  title={Gradient-Free Accelerated Event-Triggered Scheme for Constrained Network Optimization in Smart Grids}, 
  year={2024},
  volume={15},
  number={3},
  pages={2843-2855}
}

@article{lam2025distributionally,
  title={Distributionally constrained black-box stochastic gradient estimation and optimization},
  author={Lam, Henry and Zhang, Junhui},
  journal={Operations Research},
  volume={73},
  number={5},
  pages={2680--2694},
  year={2025},
  publisher={INFORMS}
}

@inproceedings{maheshwari2022zeroth,
  title={Zeroth-order methods for convex-concave min-max problems: Applications to decision-dependent risk minimization},
  author={Maheshwari, Chinmay and Chiu, Chih-Yuan and Mazumdar, Eric and Sastry, Shankar and Ratliff, Lillian},
  booktitle={Proceedings of the 25th International Conference on Artificial Intelligence and Statistics},
  pages={6702--6734},
  year={2022},
 volume={151},
  organization={PMLR}
}

@article{nesterov2017efficiency,
  title={Efficiency of the accelerated coordinate descent method on structured optimization problems},
  author={Nesterov, Yurii and Stich, Sebastian U},
  journal={SIAM Journal on Optimization},
  volume={27},
  number={1},
  pages={110--123},
  year={2017},
  publisher={SIAM}
}

@article{hu2025convergence,
  title={On the convergence rate of stochastic approximation for gradient-based stochastic optimization},
  author={Hu, Jiaqiao and Fu, Michael C},
  journal={Operations Research},
  volume={73},
  number={2},
  pages={1143--1150},
  year={2025},
  publisher={INFORMS}
}

@inproceedings{zhang2020single,
 author = {Zhang, Jiawei and Xiao, Peijun and Sun, Ruoyu and Luo, Zhiquan},
 booktitle = {Advances in Neural Information Processing Systems},
 pages = {7377--7389},
 publisher = {Curran Associates, Inc.},
 title = {A Single-Loop Smoothed Gradient Descent-Ascent Algorithm for Nonconvex-Concave Min-Max Problems},
 volume = {33},
 year = {2020}
}

@inproceedings{mahdavinia2022tight,
 author = {Mahdavinia, Pouria and Deng, Yuyang and Li, Haochuan and Mahdavi, Mehrdad},
 booktitle = {Proceedings of the 36th International Conference on Neural Information Processing Systems},
 pages = {31213--31225},
 publisher = {Curran Associates, Inc.},
 title = {Tight Analysis of Extra-gradient and Optimistic Gradient Methods For Nonconvex Minimax Problems},
 volume = {35},
 year = {2022}
}

@article{khodr2008maximum,
  title={Maximum savings approach for location and sizing of capacitors in distribution systems},
  author={Khodr, HM and Olsina, Fernando Gabriel and De Oliveira-De Jesus, Paulo M and Yusta, JM},
  journal={Electric Power Systems Research},
  volume={78},
  number={7},
  pages={1192--1203},
  year={2008},
  publisher={Elsevier}
}

@article{zhang2020proximal,
  title={A proximal alternating direction method of multiplier for linearly constrained nonconvex minimization},
  author={Zhang, Jiawei and Luo, Zhi-Quan},
  journal={SIAM Journal on Optimization},
  volume={30},
  number={3},
  pages={2272--2302},
  year={2020},
  publisher={SIAM}
}

@article{zhou2025zeroth,
  title={A Zeroth-Order Extra-Gradient Method For Black-Box Constrained Optimization},
  author={Zhou, Yuke and Jin, Ruiyang and Gao, Siyang and Wang, Jianxiao and Song, Jie},
  journal={arXiv preprint arXiv:2506.20546},
  year={2025}
}

@inproceedings{lian2016comprehensive,
  title={A comprehensive linear speedup analysis for asynchronous stochastic parallel optimization from zeroth-order to first-order},
  author={Lian, Xiangru and Zhang, Huan and Hsieh, Cho-Jui and Huang, Yijun and Liu, Ji},
  booktitle={Proceedings of the 30th International Conference on Neural Information Processing Systems},
  pages={3062--3070},
  year={2016}
}

@article{kiefer1952stochastic,
  title={Stochastic estimation of the maximum of a regression function},
  author={Kiefer, Jack and Wolfowitz, Jacob},
  journal={The Annals of Mathematical Statistics},
  pages={462--466},
  year={1952},
  publisher={JSTOR}
}

@inproceedings{flaxman2005online,
  title={Online convex optimization in the bandit setting: gradient descent without a gradient},
  author={Flaxman, Abraham D and Kalai, Adam Tauman and McMahan, H Brendan},
  booktitle={Proceedings of the 16th Annual ACM-SIAM Symposium on Discrete Algorithms},
  pages={385--394},
  year={2005}
}

@article{scheinberg2022finite,
  title={Finite difference gradient approximation: To randomize or not?},
  author={Scheinberg, Katya},
  journal={INFORMS Journal on Computing},
  volume={34},
  number={5},
  pages={2384--2388},
  year={2022},
  publisher={INFORMS}
}

@ARTICLE{he2024parallel,
  author={He, Pengcheng and Lu, Siyuan and Xu, Fan and Kang, Yibin and Yan, Qi and Shi, Qingjiang},
  journal={IEEE Transactions on Wireless Communications}, 
  title={A Parallel Zeroth-Order Framework for Efficient Cellular Network Optimization}, 
  year={2024},
  volume={23},
  number={11},
  pages={17522-17538}
}

@InProceedings{liu2018zeroth,
  title = 	 {Zeroth-Order Online Alternating Direction Method of Multipliers: Convergence Analysis and Applications},
  author = 	 {Liu, Sijia and Chen, Jie and Chen, Pin-Yu and Hero, Alfred},
  booktitle = 	 {Proceedings of the Twenty-First International Conference on Artificial Intelligence and Statistics},
  pages = 	 {288--297},
  year = 	 {2018},
  volume = 	 {84},
  publisher =    {PMLR}
}

@inproceedings{cai2021zeroth,
  title={A zeroth-order block coordinate descent algorithm for huge-scale black-box optimization},
  author={Cai, HanQin and Lou, Yuchen and McKenzie, Daniel and Yin, Wotao},
  booktitle={Proceedings of the 38th International Conference on Machine Learning},
  pages={1193--1203},
  year={2021},
volume = 	 {139},
  organization={PMLR}
}

@inproceedings{shanbhag2021zeroth,
  title={Zeroth-order randomized block methods for constrained minimization of expectation-valued Lipschitz continuous functions},
  author={Shanbhag, Uday V and Yousefian, Farzad},
  booktitle={2021 Seventh Indian Control Conference (ICC)},
  pages={7--12},
  year={2021},
  organization={IEEE}
}

@article{yuan2015zeroth,
  title={Zeroth-order method for distributed optimization with approximate projections},
  author={Yuan, Deming and Ho, Daniel WC and Xu, Shengyuan},
  journal={IEEE Transactions on Neural Networks and Learning Systems},
  volume={27},
  number={2},
  pages={284--294},
  year={2015},
  publisher={IEEE}
}

@article{park2015penalty,
  title={Penalty function with memory for discrete optimization via simulation with stochastic constraints},
  author={Park, Chuljin and Kim, Seong-Hee},
  journal={Operations Research},
  volume={63},
  number={5},
  pages={1195--1212},
  year={2015},
  publisher={INFORMS}
}

@article{ghadimi2013stochastic,
  title={Stochastic first-and zeroth-order methods for nonconvex stochastic programming},
  author={Ghadimi, Saeed and Lan, Guanghui},
  journal={SIAM Journal on Optimization},
  volume={23},
  number={4},
  pages={2341--2368},
  year={2013},
  publisher={SIAM}
}

@article{an2024robust,
  title={Robust and faster zeroth-order minimax optimization: complexity and applications},
  author={An, Weixin and Liu, Yuanyuan and Shang, Fanhua and Liu, Hongying},
  journal={Advances in Neural Information Processing Systems},
  volume={37},
  pages={37050--37069},
  year={2024}
}

@article{xu2023gradient,
  title={Gradient-based simulation optimization algorithms via multi-resolution system approximations},
  author={Xu, Jingxu and Zheng, Zeyu},
  journal={INFORMS Journal on Computing},
  volume={35},
  number={3},
  pages={633--651},
  year={2023},
  publisher={INFORMS}
}

\newpage

\begin{center}
    {\LARGE\bfseries Appendices}
\end{center}

\appendix

\section{Proof of Theorem \ref{thm:ZOB-GDA}}\label{app:proof_of_thm_ZOBGDA}
First, define the proximity operator $$\text{prox}_{\lambda h}(x)=\arg\min_{u\in \mathbb{R}^{d_x}}\left\{h(u)+\frac{1}{2\lambda}\|x-u\|^2\right\}.$$
Define the filtration: $\mathcal{F}_k=\sigma(x_0, y_0, \mathcal{I}_0, \cdots, \mathcal{I}_{k-1},x_k,y_k)$.
Then, we provide the following lemma to bound the one-step drift of $\Phi_{1/2L}(x_k)$.
\begin{lemma}\label{lemma:lemma1}
    Let $\Delta_k=\Phi (x_k)-f(x_k,y_k)$. The following inequality holds for any $k\geq 0$,
    \begin{align}\label{eq:lemma1}
    &\mathbb{E}\left[\left. \Phi_{1/2L}(x_{k+1})-\Phi_{1/2L}(x_k) \right| \mathcal{F}_k\right]\nonumber\\
    \leq & \frac{2\alpha L}{N}\Delta_k -\frac{\alpha}{8N}\left\|\nabla\Phi_{1/2L}(x_k)\right\|^2 +\frac{2\alpha^2\Lambda^2L}{N} +\frac{\alpha^2 b L^3r_k^2}{2}+\frac{\alpha b L^2r_k^2}{2}.
    \end{align}
\end{lemma}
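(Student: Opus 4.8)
The plan is to track the drift of the Moreau envelope through its proximal point, following the standard template for nonconvex-concave analysis. First I would introduce $\hat{x}_k := \text{prox}_{\frac{1}{2L}\Phi}(x_k) = \arg\min_u\{\Phi(u)+L\|u-x_k\|^2\}$, so that $\Phi_{1/2L}(x_k)=\Phi(\hat{x}_k)+L\|\hat{x}_k-x_k\|^2$ holds with equality, while $\hat{x}_k$ is merely feasible (not optimal) for the envelope at $x_{k+1}$, giving the inequality $\Phi_{1/2L}(x_{k+1})\le \Phi(\hat{x}_k)+L\|\hat{x}_k-x_{k+1}\|^2$. Subtracting these and substituting the primal update $x_{k+1}=x_k-\alpha G_x^{\mathcal{I}_k}(x_k,y_k)$, the $\Phi(\hat{x}_k)$ terms cancel; expanding the square then yields a one-step bound of the form $\Phi_{1/2L}(x_{k+1})-\Phi_{1/2L}(x_k)\le 2\alpha L\langle G_x^{\mathcal{I}_k}, \hat{x}_k-x_k\rangle + \alpha^2 L\|G_x^{\mathcal{I}_k}\|^2$.

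Next I would take the conditional expectation over the random block $\mathcal{I}_k$. Since each coordinate belongs to a uniformly drawn size-$b$ block with probability $b/d_x=1/N$, both the block estimator and its squared norm contract by the factor $1/N$: $\mathbb{E}[G_x^{\mathcal{I}_k}\mid\mathcal{F}_k]=\frac{1}{N}G_x(x_k,y_k)$ and $\mathbb{E}[\|G_x^{\mathcal{I}_k}\|^2\mid\mathcal{F}_k]=\frac{1}{N}\|G_x(x_k,y_k)\|^2$, where $G_x$ denotes the full CGE. I would then split $G_x=\nabla_x f(x_k,y_k)+e_k$, where the bias $e_k$ is controlled coordinatewise by Lemma \ref{lemma:estimation_error}, so $\|e_k\|\le \frac{\sqrt{d_x}}{2}Lr_k$.

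The heart of the argument is the deterministic inner product $\langle \nabla_x f(x_k,y_k),\hat{x}_k-x_k\rangle$. I would upper bound it by chaining three facts: (i) $L$-smoothness of $f(\cdot,y_k)$, which gives $\langle\nabla_x f(x_k,y_k),\hat{x}_k-x_k\rangle\le f(\hat{x}_k,y_k)-f(x_k,y_k)+\frac{L}{2}\|\hat{x}_k-x_k\|^2$; (ii) the elementary bound $f(\hat{x}_k,y_k)\le\Phi(\hat{x}_k)$; and (iii) the optimality of $\hat{x}_k$, which yields $\Phi(\hat{x}_k)+L\|\hat{x}_k-x_k\|^2\le\Phi(x_k)$. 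Combined with the Moreau-envelope gradient identity $\|\hat{x}_k-x_k\|=\frac{1}{2L}\|\nabla\Phi_{1/2L}(x_k)\|$, this produces $\langle\nabla_x f,\hat{x}_k-x_k\rangle\le\Delta_k-\frac{1}{8L}\|\nabla\Phi_{1/2L}(x_k)\|^2$, which after the $\frac{2\alpha L}{N}$ prefactor is the source of both the $\frac{2\alpha L}{N}\Delta_k$ term and a negative curvature term $-\frac{\alpha}{4N}\|\nabla\Phi_{1/2L}(x_k)\|^2$.

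Finally I would dispatch the two error contributions. The bias cross-term $\frac{2\alpha L}{N}\langle e_k,\hat{x}_k-x_k\rangle$ is handled by Cauchy--Schwarz and Young's inequality, spending half of the available curvature budget (turning $-\frac{\alpha}{4N}$ into the stated $-\frac{\alpha}{8N}$) in exchange for the constant $\frac{\alpha b L^2 r_k^2}{2}$, using $d_x/N=b$. The variance term is bounded via $\|G_x\|^2\le 2\|\nabla_x f\|^2+2\|e_k\|^2\le 2\Lambda^2+\frac{d_x L^2 r_k^2}{2}$, which after the $\frac{\alpha^2 L}{N}$ factor gives exactly $\frac{2\alpha^2\Lambda^2 L}{N}+\frac{\alpha^2 b L^3 r_k^2}{2}$. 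I expect the main obstacle to be the third step --- correctly chaining smoothness with the prox optimality to extract $\Delta_k$ --- together with calibrating the Young's-inequality constant so that the residual curvature lands exactly on the $-\frac{\alpha}{8N}$ coefficient; by contrast, the block-sampling expectation and the bias/variance bookkeeping are routine once that backbone is in place.
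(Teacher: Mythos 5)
Your proposal is correct and follows essentially the same route as the paper's proof: the same proximal point $\hat{x}_k=\mathrm{prox}_{\Phi/2L}(x_k)$ and envelope comparison, the same expansion of $\|\hat{x}_k-x_{k+1}\|^2$ under the block update, the same $1/N$ block-sampling expectations, the same chaining of smoothness with prox-optimality and the identity $\|\hat{x}_k-x_k\|=\frac{1}{2L}\|\nabla\Phi_{1/2L}(x_k)\|$ to extract $\Delta_k$, and the same Young/Lemma \ref{lemma:estimation_error} bookkeeping for the bias and variance terms, landing on identical constants. The only difference is presentational (you track a $-\frac{\alpha}{4N}$ curvature budget and spend half on the bias cross-term, whereas the paper accumulates the positive coefficient $\frac{3\alpha L^2}{2N}\|\hat{x}_k-x_k\|^2$ before subtracting), which is mathematically the same allocation.
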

The proof of Lemma \ref{lemma:lemma1} is delayed in Appendix \ref{app:proof_of_lemma1}. We further provide the following lemma to bound the summation of $\Delta_k$.
\begin{lemma}\label{lemma:lemma3}
For any integer $B$ that can divide $K$, we have
\begin{align*}
\frac{1}{K}\sum_{k=0}^{K-1}\Delta_k\leq & \alpha\Lambda_0^2(B+1)+\frac{R_y^2}{2\beta B}+\frac{\Delta_0}{K}.
\end{align*}
\end{lemma}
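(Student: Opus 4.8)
The plan is to control the running average of the dual gap $\Delta_k=\Phi(x_k)-f(x_k,y_k)$ by pairing a sharp per-step ascent inequality for the projected dual update with a block-wise comparison against a frozen reference maximizer, using concavity of $f$ in $y$ and $\Lambda$-Lipschitzness in $x$. For the per-step bound I would fix any comparator $\hat y\in\mathcal Y$, apply the firm non-expansiveness of $\mathcal P_{\mathcal Y}[\cdot]$ to the update $y_{k+1}=\mathcal P_{\mathcal Y}[y_k+\beta\nabla_y f(x_k,y_k)]$, and combine it with concavity to get
\[
f(x_k,\hat y)-f(x_k,y_k)\le \frac{1}{2\beta}\big(\|y_k-\hat y\|^2-\|y_{k+1}-\hat y\|^2-\|y_{k+1}-y_k\|^2\big)+\langle\nabla_y f(x_k,y_k),\,y_{k+1}-y_k\rangle.
\]
The crucial step is to bound the inner product not by $\frac{\beta}{2}\|\nabla_y f\|^2$ (which would leave an uncontrolled $\beta\Lambda^2$ term, absent from the target) but via $L$-smoothness, $\langle\nabla_y f(x_k,y_k),y_{k+1}-y_k\rangle\le f(x_k,y_{k+1})-f(x_k,y_k)+\frac{L}{2}\|y_{k+1}-y_k\|^2$. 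Since $\beta\le 1/L$, the two $\|y_{k+1}-y_k\|^2$ terms cancel, leaving a telescoping dual-distance term plus the function increment $f(x_k,y_{k+1})-f(x_k,y_k)$ to be telescoped later.

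Next I partition $\{0,\dots,K-1\}$ into $K/B$ blocks of length $B$ and, on block $s$, pick the reference $\hat y_s\in\arg\max_y f(x_{sB},y)$ so that $\Phi(x_{sB})=f(x_{sB},\hat y_s)$. Applying the $\Lambda$-Lipschitz bound in $x$ twice gives $\Phi(x_k)-f(x_k,\hat y_s)\le 2\Lambda\|x_k-x_{sB}\|$, hence $\Delta_k\le[f(x_k,\hat y_s)-f(x_k,y_k)]+2\Lambda\|x_k-x_{sB}\|$. Summing the per-step inequality over the block telescopes the dual distance down to $\frac{1}{2\beta}\|y_{sB}-\hat y_s\|^2\le \frac{R_y^2}{2\beta}$, and the increments $f(x_k,y_{k+1})-f(x_k,y_k)$ are rearranged into a telescoping piece $f(x_{(s+1)B},y_{(s+1)B})-f(x_{sB},y_{sB})$ plus a primal-drift remainder. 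The drift is controlled by $\|x_{k+1}-x_k\|=\alpha\|G_x^{\mathcal I_k}(x_k,y_k)\|\le\alpha\Lambda_0$, which follows from Lemma \ref{lemma:estimation_error} applied coordinate-wise together with $\|\nabla_x f\|\le\Lambda$; summing $\|x_k-x_{sB}\|\le(k-sB)\alpha\Lambda_0$ over the block yields at most $\frac{B^2}{2}\alpha\Lambda_0$, the source of the $B$-per-step contribution.

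Finally I sum over the $K/B$ blocks: the reset dual distances contribute $\frac{K}{B}\cdot\frac{R_y^2}{2\beta}$; the function increments telescope globally to the single boundary term $f(x_K,y_K)-f(x_0,y_0)$; and the drift terms accumulate to $O(K\alpha\Lambda_0^2 B)$. Dividing by $K$ produces $\frac{R_y^2}{2\beta B}+\alpha\Lambda_0^2(B+2)+\frac{1}{K}\big(f(x_K,y_K)-f(x_0,y_0)\big)$, and the boundary term is absorbed into $\frac{\Delta_0}{K}$ through $f(x_K,y_K)\le\Phi(x_K)\le\Phi(x_0)$, giving the claimed bound. I expect the main obstacle to be precisely the lack of strong concavity in $y$: the maximizer $\hat y_s$ shifts across blocks and is not Lipschitz in $x$, so the dual distance cannot be telescoped globally and must be reset every $B$ steps, creating the $\frac{1}{B}$-versus-$B$ tradeoff that the block length $B$ is introduced to balance. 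Obtaining the clean statement with no residual $\beta\Lambda^2$ term hinges on the smoothness-based replacement of the gradient-norm term by a telescoping function increment under the condition $\beta\le 1/L$, while the remaining matching of constants (e.g. $B+1$ versus $B+2$) is routine bookkeeping.
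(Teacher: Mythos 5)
Your proposal reproduces the paper's argument almost step for step: the same projection-plus-concavity per-step inequality with the gradient inner product absorbed via $L$-smoothness and $\beta\le 1/L$ (the paper's Eq.~(\ref{eq:projection_result})), the same partition into $K/B$ blocks with a frozen maximizer $y^*(x_{jB})$ per block, the same two-sided Lipschitz bound $\Phi(x_k)-f(x_k,y^*(x_{sB}))\le 2\Lambda\|x_k-x_{sB}\|$ (the paper's $E_1+E_2$), the same drift control $\|x_{k+1}-x_k\|\le\alpha\Lambda_0$, and the same per-block reset of the dual distance costing $R_y^2/(2\beta)$. However, your final step contains a genuine error: you absorb the global boundary term $f(x_K,y_K)-f(x_0,y_0)$ into $\Delta_0$ by invoking $f(x_K,y_K)\le\Phi(x_K)\le\Phi(x_0)$. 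The second inequality is unjustified and false in general: GDA in the nonconvex-concave setting has no descent property for $\Phi$. The primal step is (biased, block) gradient descent on $f(\cdot,y_k)$, not on $\Phi$, and since $y_k$ lags behind $y^*(x_k)$, the envelope $\Phi$ can increase along the iterates — indeed, if $\Phi$ were monotone the entire two-time-scale analysis of Theorem~\ref{thm:ZOB-GDA} would be unnecessary.

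The fix is exactly what the paper does, and your own accounting leaves room for it: split
\begin{align*}
f(x_K,y_K)-f(x_0,y_0)=\bigl(f(x_K,y_K)-f(x_0,y_K)\bigr)+\bigl(f(x_0,y_K)-f(x_0,y_0)\bigr),
\end{align*}
bound the first difference by Lipschitz continuity in $x$, namely $\Lambda\|x_K-x_0\|\le K\alpha\Lambda_0^2$, and the second by $\Delta_0$ since $f(x_0,y_K)\le\Phi(x_0)$ (this step needs no monotonicity, only the definition of $\Phi$). After dividing by $K$ the first piece contributes the constant $\alpha\Lambda_0^2$; since the block-drift terms genuinely sum to $\alpha\Lambda_0^2(B+1)$ (not $(B+2)$ as you wrote — your drift bookkeeping is off by the same $+1$ you incorrectly saved at the boundary), the total is $\alpha\Lambda_0^2(B+2)+R_y^2/(2\beta B)+\Delta_0/K$, matching the lemma. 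With this one-line repair your proof is correct and coincides with the paper's.
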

The proof of Lemma \ref{lemma:lemma3} is provided in Appendix \ref{app:proof_of_lemma3}. Taking the telescoping sum of (\ref{eq:lemma1}) and taking the total expectation, we have
\begin{align*}
&\frac{1}{K}\sum_{k=0}^{K-1}\mathbb{E}\left[\left\| \nabla \Phi_{1/2L}(x_k)\right\|^2\right]\\
\leq & \frac{8N}{\alpha K}\mathbb{E}\left[ \Phi_{1/2L}(x_{0})-\Phi_{1/2L}(x_{K}) \right]+\frac{16 L}{K}\sum_{k=0}^{K-1}\mathbb{E}\left[\Delta_k\right] \\
& + 16\alpha \Lambda^2L + \frac{4\alpha d_xL^3}{K}\sum_{k=1}^{K-1}r_k^2 + \frac{4d_x L^2}{K} \sum_{k=0}^{K-1}r_k^2\\
\leq\; & \frac{8N\Delta_\Phi}{\alpha K}+\frac{16 L}{K}\sum_{k=0}^{K-1}\mathbb{E}\left[\Delta_k\right]+ 16\alpha \Lambda^2L +\frac{8d_x L^2}{K} \sum_{k=0}^{K-1}r_k^2,
\end{align*}
where $\Delta_\Phi:=\Phi_{1/2L}(x_0)-\min_{x\in\mathbb{R}^{d_x}}\Phi_{1/2L}(x)$. The last step follows from the fact that $\alpha L\leq 1$. Then, we can combine the above inequality with the result in Lemma \ref{lemma:lemma3} to get
\begin{equation}
\begin{aligned}\label{eq:prop2_result}
&\frac{1}{K}\sum_{k=0}^{K-1}\mathbb{E}\left[\left\| \nabla \Phi_{1/2L}(x_k)\right\|^2\right]\\
\leq & \frac{8N\Delta_\Phi}{\alpha K}+\frac{16L\Delta_0}{K}+\frac{8NL^2}{K}+\frac{8LR_y^2}{\beta B}+16\alpha L\Lambda_0^2(B+2),
\end{aligned}
\end{equation}
Due to that, for any $x\in\mathbb{R}^{d_x}$,
\begin{align*}
\Phi_{1/2L}(x)=\; &\min_{u\in\mathbb{R}^{d_x}}\{\Phi (u)+L\|x-u\|^2\}\\
=\; &\min_{u\in\mathbb{R}^{d_x}}\left\{\max_{y\in\mathcal{Y}} f(u,y)+L\|x-u\|^2\right\}\\
\geq\; & \min_{u\in\mathbb{R}^{d_x}}\left\{\underline{f}+L\|x-u\|^2\right\}\\
=\; & \underline{f},
\end{align*}
we have $\Delta_{\Phi}$ is upper bounded. Without loss of generality, set $B=\frac{R_y}{\Lambda_0}\sqrt{\frac{1}{2\alpha \beta}}$ that can divide $K$, then we can derive 
$$\frac{1}{K}\sum_{k=0}^{K-1}\mathbb{E}\left[\left\| \nabla \Phi_{1/2L}(x_k)\right\|^2\right]\leq \mathcal{O}\left(\frac{N}{\alpha K}\right)+\epsilon_c^2,$$ 
which leads to $$\min_{k\leq K-1}\mathbb{E}\left[\left\| \nabla\Phi_{1/2L}(x_k)\right\|\right]\leq \left(\mathcal{O}\left(\frac{N}{\alpha K}\right)+\epsilon_c^2\right)^{1/2}\leq \mathcal{O}\left(\sqrt{\frac{N}{\alpha K}}\right)+\epsilon_c.$$

\subsection{Proof of Lemma \ref{lemma:lemma1}}\label{app:proof_of_lemma1}
Denote $\hat{x}_k=\text{prox}_{\Phi/2L}(x_k)$. 
Using the definition of $\Phi_{1/2L}(x_{k+1})$, we have
\begin{align}\label{eq:lemma3_1}
\Phi_{1/2L}(x_{k+1})\leq \Phi (\hat{x}_k)+L\|\hat{x}_k-x_{k+1}\|^2.
\end{align}
Based on the update of $x_k$, we have
\begin{equation}
\begin{aligned}\label{eq:primal_update_decomp}
& \|\hat{x}_k- x_{k+1} \|^2\\
=\;&  \left \| \hat{x}_k-x_k+\alpha G_x^{\mathcal{I}_k} (x_k,y_k) \right\|^2\\
=\;& \|\hat{x}_k-x_k\|^2+2\alpha \langle G_x^{\mathcal{I}_k}(x_k,y_k), \; \hat{x}_k-x_{k} \rangle +\alpha^2 \left\|G_x^{\mathcal{I}_k}(x_k,y_k)\right\|^2.
\end{aligned}
\end{equation}

Substituting the above equation into (\ref{eq:lemma3_1}) leads to
\begin{equation}
\begin{aligned}\label{eq:lemma3_2}
&\Phi_{1/2L}(x_{k+1})\\
\leq\; & \Phi(\hat{x}_k)+L\|\hat{x}_k-x_k\|^2+\alpha^2L\left\|G_x^{\mathcal{I}_k}(x_k,y_k)\right\|^2+ 2\alpha L \langle G_x^{\mathcal{I}_k}(x_k,y_k),\; \hat{x}_k-x_k \rangle \\
=\;&\Phi_{1/2L}(x_{k}) +\alpha^2L\left\|G_x^{\mathcal{I}_k}(x_k,y_k)\right\|^2+ 2\alpha L \langle G_x^{\mathcal{I}_k}(x_k,y_k),\; \hat{x}_k-x_k \rangle. 
\end{aligned}
\end{equation}
% Denote $\delta_k^x(i)=g_i^x(x_k,y_k)-\nabla_{x,i}f(x_k,y_k)$ for any $i\in[d_x]$. 
Taking the conditional expectation of the third term on the right-hand side of (\ref{eq:lemma3_2}), we have
\begin{align}\label{eq:lemma3_3}
&\mathbb{E}\left[\left. 2\alpha L \langle G_x^{\mathcal{I}_k}(x_k,y_k),\; \hat{x}_k -x_k \rangle \right| \mathcal{F}_k\right]\nonumber\\
=&\frac{2\alpha L}{N} \langle \nabla_{x}f(x_k,y_k), \hat{x}_k-x_k \rangle  +  \frac{2\alpha L}{N} \langle G_x(x_k,y_k)-\nabla_x f(x_k,y_k),\; \hat{x}_k -x_k \rangle\nonumber\\
\leq & \frac{2\alpha L}{N} (f(\hat{x}_k,y_k)-f(x_k,y_k))+\frac{\alpha L^2}{N}\|\hat{x}_k-x_k\|^2\nonumber\\
& +\frac{2\alpha L}{N} \langle G_x(x_k,y_k)-\nabla_x f(x_k,y_k),\; \hat{x}_k -x_k \rangle\nonumber\\
\leq & \frac{2\alpha L}{N} (f(\hat{x}_k,y_k)-f(x_k,y_k))+\frac{3\alpha L^2}{2N}\|\hat{x}_k-x_k\|^2+\frac{\alpha  b L^2r_k^2}{2},
\end{align}
where in the first inequality we used the smoothness of $f(x,y)$, and in the second inequality we used the AM-GM inequality and Lemma \ref{lemma:estimation_error}.
Using the relation $\Phi(\hat{x}_k)\geq f(\hat{x}_k,y_k)$ and the definition of $\hat{x}_k$, we have
\begin{align*}
f(\hat{x}_k,y_k)-f(x_k,y_k)\leq \Phi(\hat{x}_k)-f(x_k,y_k)\leq \Delta_k-L\|\hat{x}_k-x_k\|^2,
\end{align*}
where we applied the relation $\Phi(\hat{x}_k)\leq \Phi(x_k)-L\|x_k-\hat{x}_k\|^2$ in the last step.  Substituting the above inequality into (\ref{eq:lemma3_3}), we further use the relation $$\|\hat{x}_k-x_k\|= \frac{1}{2L}\|\nabla\Phi_{1/2L}(x_k)\|,$$ which is derived from \cite{davis2019stochastic}, to get
\begin{align}\label{eq:lemma3_4}
&\mathbb{E}\left[\left. 2\alpha L\langle G_x^{\mathcal{I}_k}(x_k,y_k),\; \hat{x}_k - x_k \rangle \right| \mathcal{F}_k\right]\nonumber\\
\leq & \frac{2\alpha L\Delta_k}{N}-\frac{\alpha}{8N}\|\nabla\Phi_{1/2L}(x_k)\|^2+\frac{\alpha b L^2 r_k^2}{2}.
\end{align}
For the term $\alpha^2L\left\|G_x^{\mathcal{I}_k}(x_k,y_k)\right\|^2$, we have
\begin{equation}
\begin{aligned}\label{eq:partial_grad_bound}
& \mathbb{E}\left[\left. \alpha^2L\left\|G_x^{\mathcal{I}_k}(x_k,y_k)\right\|^2\right| \mathcal{F}_k \right]\\
\leq\; & \frac{\alpha^2 L}{N} \left\|G_x(x_k,y_k) -\nabla_x f(x_k,y_k) +\nabla_x f(x_k,y_k) \right\|^2 \\
\leq\; & \frac{2\alpha^2 L\Lambda^2}{N}+\frac{\alpha^2 bL^3r_k^2}{2},
\end{aligned}    
\end{equation}
where we applied the Lipschitz continuity and Lemma \ref{lemma:estimation_error} in the last step.

Taking the expectation of (\ref{eq:lemma3_2}) conditioned on $\mathcal{F}_k$ and combining it with (\ref{eq:lemma3_4}) and (\ref{eq:partial_grad_bound}) can derive the final result in Lemma \ref{lemma:lemma1}.

\subsection{Proof of Lemma \ref{lemma:lemma3}}\label{app:proof_of_lemma3}
We divide $\{\Delta_k\}_{k=0}^{K-1}$ into $K/B$ blocks: $\{\Delta_k\}_{k=0}^{B-1},\cdots,\{\Delta_k\}_{jB}^{(j+1)B-1},\cdots,\{\Delta_k\}_{K-B}^{K-1}$, with each block containing $B$ terms. Then, we have
\begin{align}\label{eq:delta_division}
\frac{1}{K}\sum_{k=0}^{K-1}\Delta_k=\frac{B}{K}\sum_{j=0}^{K/B-1}\left(\frac{1}{B}\sum_{k=jB}^{(j+1)B-1}\Delta_k\right).
\end{align}
We provide the following lemma to bound $\Delta_k$, whose proof is provided in \ref{app:lemma2_proof}.
\begin{lemma}\label{lemma:lemma2}
Denote $y^*(x)$ as an arbitrary element in the set $\mathcal{Y}^*(x)=\arg\max_{y\in\mathcal{Y}}f(x,y)$ for any $x\in\mathbb{R}^{d_x}$. Then, for the sequence $\{(x_k,y_k)\}$ derived from ZOB-GDA, we have for any $s\leq k$:
\begin{align*}
 \Delta_k \leq \;&  \alpha\Lambda_0^2(2k-2s+1)+ f(x_{k+1},y_{k+1})-f(x_k,y_k)\\
 & +\frac{1}{2\beta}\left( \|y_k-y^*(x_s)\|^2-\|y_{k+1}-y^*(x_s)\|^2\right).
\end{align*}
\end{lemma}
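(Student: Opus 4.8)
The plan is to prove the inequality pathwise — no expectation is required, since every bound below holds for each realization of the sampled block $\mathcal{I}_k$ — by combining a uniformly controlled primal drift with a dual-ascent telescoping argument anchored at the fixed reference $y^*(x_s)$. I would first bound the one-step primal movement independently of which block is drawn. Since $G_x^{\mathcal{I}_k}(x_k,y_k)$ has at most $b$ nonzero entries, the triangle inequality in $\mathbb{R}^b$ and Lemma \ref{lemma:estimation_error} give $\|G_x^{\mathcal{I}_k}(x_k,y_k)\|\le\|\nabla_x f(x_k,y_k)\|+\frac{\sqrt b}{2}Lr_k\le\Lambda+\frac{\sqrt b}{2}L\sup_k r_k=\Lambda_0$, so $\|x_{k+1}-x_k\|\le\alpha\Lambda_0$ and $\|x_k-x_s\|\le\alpha\Lambda_0(k-s)$ for any $s\le k$. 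This is exactly where the constant $\Lambda_0$ and the factor $(2k-2s+1)$ will come from.

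Next I would treat the dual step. Because $f$ is affine in $y$, $g:=\nabla_y f(x_k,y_k)=c(x_k)$ is independent of $y$, so $f(x_k,y)-f(x_k,y_{k+1})=\langle g,\,y-y_{k+1}\rangle$ exactly. The first-order optimality condition of the projection $y_{k+1}=\mathcal{P}_\mathcal{Y}[y_k+\beta g]$ yields $\langle g,\,y-y_{k+1}\rangle\le\frac{1}{2\beta}\big(\|y-y_k\|^2-\|y-y_{k+1}\|^2-\|y_{k+1}-y_k\|^2\big)$; dropping the last (nonpositive) term and setting $y=y^*(x_s)$ gives
$$f(x_k,y^*(x_s))-f(x_k,y_{k+1})\le\frac{1}{2\beta}\big(\|y_k-y^*(x_s)\|^2-\|y_{k+1}-y^*(x_s)\|^2\big),$$
the quantity that telescopes in $k$ once $s$ is fixed. (For merely concave $f$ the same display holds after using $L$-smoothness together with $\beta\le 1/L$ to absorb the $\|y_{k+1}-y_k\|^2$ term.)

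The crux is to replace $\Phi(x_k)=\max_y f(x_k,y)$, which cannot be paired directly with the single-anchor dual telescoping, by $f(x_k,y^*(x_s))$ up to the primal drift. I would route through the anchor $x_s$: the $\Lambda$-Lipschitzness of $\Phi$ (inherited from the uniform $\Lambda$-Lipschitzness of $f$ in $x$) gives $\Phi(x_k)\le\Phi(x_s)+\Lambda\|x_k-x_s\|$; the identity $\Phi(x_s)=f(x_s,y^*(x_s))$ and the $\Lambda$-Lipschitzness of $f$ in $x$ give $\Phi(x_s)\le f(x_k,y^*(x_s))+\Lambda\|x_k-x_s\|$; hence $\Phi(x_k)\le f(x_k,y^*(x_s))+2\Lambda\|x_k-x_s\|$. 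One more Lipschitz step converts $f(x_k,y_{k+1})$ into $f(x_{k+1},y_{k+1})$ at cost $\Lambda\|x_{k+1}-x_k\|$. Chaining these with the dual display, substituting $\|x_k-x_s\|\le\alpha\Lambda_0(k-s)$ and $\|x_{k+1}-x_k\|\le\alpha\Lambda_0$, bounding $\Lambda\le\Lambda_0$ to collect all drift into $\alpha\Lambda_0^2(2k-2s+1)$, and subtracting $f(x_k,y_k)$ from both sides yields the claimed bound on $\Delta_k=\Phi(x_k)-f(x_k,y_k)$.

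I expect the main obstacle to be precisely this $\Phi(x_k)$ term: the maximization over $y$ obstructs the dual telescoping, which requires a single fixed reference multiplier. The detour through $x_s$ — so that $\Phi$ collapses to $f(x_s,y^*(x_s))$ at the anchor and is then carried back to $x_k$ via two Lipschitz estimates — is what forces both the free index $s$ and the accumulated-drift factor $(2k-2s+1)$, and it is exactly this structure that later enables the block-wise summation in Lemma \ref{lemma:lemma3}.
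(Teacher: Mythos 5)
Your proof is correct and follows essentially the same route as the paper's: the same projection-based dual inequality anchored at $y^*(x_s)$, the same decomposition of $\Phi(x_k)-f(x_k,y_k)$ through the anchor $x_s$ (your two Lipschitz estimates for $\Phi$ are exactly the paper's terms $E_1$ and $E_2$, and the conversion of $f(x_k,y_{k+1})$ to $f(x_{k+1},y_{k+1})$ is its $E_3$), and the same per-step displacement bound $\|x_{k+1}-x_k\|\le\alpha\Lambda_0$ producing the factor $(2k-2s+1)$. The only cosmetic difference is that you use affineness of the Lagrangian in $y$ as the primary dual argument, whereas the paper argues via concavity and $L$-smoothness with $\beta\le 1/L$ --- a case your parenthetical remark also covers.
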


For the $j$th block,  using the result in Lemma \ref{lemma:lemma2} and letting $s=jB$, we have
\begin{align*}
\sum_{k=jB}^{(j+1)B-1}\Delta_k\leq\;& \alpha\Lambda_0^2 B^2+\frac{R_y^2}{2\beta} +\mathbb{E}\left[f(x_{jB+B},y_{jB+B})-f(x_{jB},y_{jB})\right].
\end{align*}
Substituting the above inequality with $j=0,1,\cdots,K/B-1$ into (\ref{eq:delta_division}), we have
\begin{align*}
\frac{1}{K}\sum_{k=0}^{K-1}\Delta_k\leq\; &\alpha\Lambda_0^2B+\frac{R_y^2}{2\beta B}+\frac{1}{K}\mathbb{E}\left[f(x_{K},y_{K})-f(x_0,y_0)\right].
\end{align*}
We further have
\begin{align*}
& \mathbb{E}\left[f(x_{K},y_{K})-f(x_0,y_0)\right]\\
=\;&\mathbb{E}\left[f(x_{K},y_{K})-f(x_0,y_K)+f(x_0,y_K)-f(x_0,y_0)\right]\\
\leq\; & \alpha \Lambda_0^2K+\Delta_0.
\end{align*}
Combining the above two inequalities leads to the result in Lemma \ref{lemma:lemma3}.

\subsection{Proof of Lemma \ref{lemma:lemma2}}\label{app:lemma2_proof}

Based on the definition of projection, we have for any $y\in\mathcal{Y}$
\begin{align*}
\langle y_{k+1}-y_k-\beta \nabla_y f(x_k,y_k),\; y-y_{k+1}\rangle \geq 0.
\end{align*}
Rearranging this inequality, we can have
\begin{equation}
\begin{aligned}\label{eq:delta_bound_mid1}
&\frac{1}{2\beta}\left(\|y-y_k\|^2-\|y-y_{k+1}\|^2-\|y_{k+1}-y_k\|^2\right)\\
\geq\;&  \langle y-y_{k+1},\; \nabla_y f(x_k,y_k)\rangle\\
=\;&  \langle y-y_{k},\; \nabla_y f(x_k,y_k)\rangle+ \langle y_k-y_{k+1},\; \nabla_y f(x_k,y_k)\rangle.
\end{aligned}
\end{equation}
Using the concavity and smoothness of $f(x,y)$ in $y$, we have
\begin{align*}
\langle y-y_{k},\; \nabla_y f(x_k,y_k)\rangle\geq &f(x_k,y)-f(x_k,y_k),\\
f(x_k,y_{k+1})-f(x_k,y_k)\geq & \langle y_{k+1}-y_{k},\; \nabla_y f(x_k,y_k)\rangle-\frac{L}{2}\|y_{k+1}-y_k\|^2.
\end{align*}
Substituting the above two bounds into (\ref{eq:delta_bound_mid1}) and using the condition $\beta\leq \frac{1}{L}$, we have
\begin{equation}
\begin{aligned}\label{eq:projection_result}
f(x_k,y_{k+1})-f(x_k,y)+\frac{1}{2\beta}\left(\|y-y_{k}\|^2-\|y-y_{k+1}\|^2\right)\geq 0.
\end{aligned}
\end{equation}

Combining the definition of $\Delta_k$ with the above inequality with $y=y^*(x_s)$, we have
\begin{equation}
\begin{aligned}\label{eq:delta_bound}
&\Delta_k=f(x_k,y^*(x_k))-f(x_k,y_k)\\
\leq\;& f(x_k,y^*(x_k))-f(x_k,y_k)+f(x_k,y_{k+1})-f(x_k,y^*(x_s))\\
& +\frac{1}{2\beta}\left(\|y^*(x_s)-y_{k}\|^2-\|y^*(x_s)-y_{k+1}\|^2\right)\\
=\;& \underbrace{f(x_k,y^*(x_k))-f(x_s,y^*(x_s))}_{E_1}+\underbrace{f(x_s,y^*(x_s))-f(x_k,y^*(x_s))}_{E_2}\\
&+\underbrace{f(x_k,y_{k+1})-f(x_{k+1},y_{k+1})}_{E_3}+\left(f(x_{k+1},y_{k+1})-f(x_k,y_k)\right)\\
&+\frac{1}{2\beta}\left(\|y^*(x_s)-y_{k}\|^2-\|y^*(x_s)-y_{k+1}\|^2\right).
\end{aligned}
\end{equation}
Due to that $f(x_s,y^*(x_k))\leq f(x_s,y^*(x_s))$, we have
\begin{align*}
E_1\leq\; & f(x_k,y^*(x_k))-f(x_s,y^*(x_k))\\
\leq\; & \Lambda\|x_k-x_s\|\\
\leq\; & \alpha \Lambda_0^2(k-s).
\end{align*}
Similarly, we also have 
$$E_2\leq  \Lambda \|x_k-x_s\|\leq \alpha \Lambda_0^2(k-s),$$
and
$$E_3 \leq \Lambda \|x_{k+1}-x_k\|\leq \alpha \Lambda_0^2.$$
Substituting these bounds into (\ref{eq:delta_bound}) leads to the final result.

\section{Proof of Theorem \ref{thm:zobsgda}}\label{app:proof_of_thm_ZOBSGDA}
We define some auxiliary notation: $$d(y,z)=\min_{x\in\mathbb{R}^{d_x}} H(x,y;z),\;\;m(z)=\min_{x\in\mathbb{R}^{d_x}}\max_{y\in\mathcal{Y}} H(x,y;z),$$
$$\psi(x,z)=\max_{y\in\mathcal{Y}} H(x,y;z),\;\;x(y,z)=\arg\min_{x\in\mathbb{R}^{d_x}}H(x,y;z),$$
$$x^*(z)=\arg\min_{x\in\mathbb{R}^{d_x}}\psi(x,z),\;\; \mathcal{Y}(z)=\arg\max_{y\in\mathcal{Y}}d(y,z),$$
$$y_+(z_k)=\mathcal{P}_\mathcal{Y}[y_k+\beta\nabla_y H(x(y_k,z_k),y_k,z_k)].$$
Note that $\mathcal{Y}(z)$ is a set, and we use $y(z)$ to denote an arbitrary element in $\mathcal{Y}(z)$.
Recall that we assume $f(x,y)$ is $L$-smooth in $x$ and $y$. Then, if $p> L$, $H(x,y;z)$ is $(p-L)$-strongly convex in $x$ and smooth in $x$ with a constant $(L+p)$. We define the potential function:
\begin{align*}
\phi(x,y,z)=H(x,y;z)-2d(y,z)+2m(z).
\end{align*}
For simplicity, we denote $\phi_k=\phi(x_k,y_k;z_k)$.

Before providing our formal proof, we present some supporting lemmas.

\subsection{Supporting Lemmas for Theorem \ref{thm:zobsgda}}
\begin{lemma}\label{lemma:phi_lower_bound}
For any $x,z\in \mathbb{R}^{d_x}$ and $y\in\mathcal{Y}$, $\phi (x,y;z)$ is lower bounded by $ \underline{f}$.
\end{lemma}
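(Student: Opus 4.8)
\textbf{Proof proposal for Lemma \ref{lemma:phi_lower_bound}.}

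The plan is to unpack the definition of the potential function $\phi(x,y;z) = K(x,y;z) - 2d(y,z) + 2m(z)$ and regroup its terms so that the lower bound $\underline{f}$ emerges from Assumption \ref{assump:boundedness}. The key observation is that $m(z) = \min_x \max_y K(x,y;z) \geq \min_x \max_y f(x,y) = \min_x \Phi(x) \geq \underline{f}$, since the extra term $\frac{p}{2}\|x-z\|^2 \geq 0$ only increases $K$ relative to $f$. This handles the $+2m(z)$ contribution directly. The remaining task is to control the quantity $K(x,y;z) - 2d(y,z)$ from below using only the available structure.

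First I would write $K(x,y;z) - 2d(y,z) = \bigl(K(x,y;z) - d(y,z)\bigr) - d(y,z)$. Since $d(y,z) = \min_x K(x,y;z)$, the first bracket $K(x,y;z) - d(y,z)$ is nonnegative for every $x$, so it can be dropped to yield $K(x,y;z) - 2d(y,z) \geq -d(y,z)$. Thus it suffices to show $2m(z) - d(y,z) \geq \underline{f}$, i.e. $m(z) \geq d(y,z) - m(z) + \underline{f}$; more cleanly, I would aim to prove $d(y,z) \leq m(z)$ for every $y \in \mathcal{Y}$, which combined with $m(z) \geq \underline{f}$ gives
\begin{align*}
\phi(x,y;z) \geq -d(y,z) + 2m(z) \geq -m(z) + 2m(z) = m(z) \geq \underline{f}.
\end{align*}
The inequality $d(y,z) \leq m(z)$ is exactly the statement that the inner minimization value at a fixed $y$ never exceeds the min-max value: indeed $d(y,z) = \min_x K(x,y;z) \leq \min_x \max_{y'\in\mathcal{Y}} K(x,y';z) = m(z)$, since pointwise $K(x,y;z) \leq \max_{y'} K(x,y';z)$.

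The argument is essentially a chain of elementary inequalities, so I do not expect a serious obstacle; the only point requiring care is keeping the direction of each inequality straight, particularly that $m(z) \geq \underline{f}$ relies on the strong convexity of $K$ in $x$ (guaranteed by $p > L$, established in the preamble) so that the inner minimizer $x^*(z)$ is well-defined, together with $K(x,y;z) \geq f(x,y)$ pointwise. I would assemble these three facts—$m(z)\geq\underline{f}$, $d(y,z)\leq m(z)$, and $K(x,y;z)\geq d(y,z)$—and combine them in the displayed chain above to conclude.
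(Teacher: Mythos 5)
Your proof is correct and takes essentially the same route as the paper's: both arguments rest on the same three elementary facts, namely $K(x,y;z)\geq d(y,z)$, $d(y,z)\leq m(z)$, and $m(z)\geq \underline{f}$ (the last via $K\geq f$ pointwise and Assumption \ref{assump:boundedness}), differing only in how the terms of $\phi$ are regrouped. One minor remark: the bound $m(z)\geq\underline{f}$ does not actually require strong convexity of $K$ in $x$ (an infimum suffices even if no minimizer is attained), so your caveat about $p>L$ is unnecessary but harmless.
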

\begin{proof}[Proof of Lemma \ref{lemma:phi_lower_bound}]
We have
\begin{align*}
\phi (x,y;z)=m(z)+ (H(x,y;z)-d(y,z))+(m(z)-d(y,z))\geq m(z)\geq \underline{f},
\end{align*}
where the first inequality follows from the definition of $d(y,z)$ and $m(z)$. The second one holds because $\Phi (x)=\max_{y\in\mathcal{Y}}f(x,y)$ is lower bounded by $\underline{f}$.
    
\end{proof}

\begin{lemma}\label{lemma:prop_collection}
There exists some constants $\sigma_1,\sigma_2$ satisfying
\begin{align*}
&\|x(y,z)-x (y,z')\|\leq \sigma_1\|z-z'\|,\\
&\|x^*(z)-x^*(z')\| \leq \sigma_1\|z-z'\|,\\
&\| x(y,z)-x(y',z)\| \leq \sigma_2 \|y-y'\|,
\end{align*}
for any $y,y'\in\mathcal{Y}$ and $z,z'\in\mathbb{R}^{d_x}$, where $\sigma_1=\frac{p}{p-L}, \sigma_2=\frac{2(p+L)}{p-L}$.
\end{lemma}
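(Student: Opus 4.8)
The plan is to treat all three bounds as instances of one two-point comparison. Since $p>L$, Assumption~\ref{assump:smoothness} makes $K(\cdot,y;z)$ exactly $(p-L)$-strongly convex and $(p+L)$-smooth in $x$, so each minimizer in the statement is unique and characterized by a first-order stationarity condition $\nabla_x f(x,y)+p(x-z)=0$. I would write this condition for two nearby instances, subtract, pair the difference with the difference of the two minimizers, and read off the bound from strong monotonicity. The only thing that changes across the three inequalities is \emph{which} variable is perturbed ($z$ versus $y$), and hence which Lipschitz estimate of $\nabla_x f$ enters the cross term.

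For the first inequality, set $x_1=x(y,z)$ and $x_2=x(y,z')$, so that $\nabla_x f(x_i,y)+p(x_i-z_i)=0$ with $z_1=z$, $z_2=z'$. Subtracting and pairing with $x_1-x_2$, the estimate $\langle \nabla_x f(x_1,y)-\nabla_x f(x_2,y),\,x_1-x_2\rangle\ge -L\|x_1-x_2\|^2$ (from $L$-smoothness in $x$ and Cauchy--Schwarz) gives $(p-L)\|x_1-x_2\|^2\le p\langle z-z',x_1-x_2\rangle\le p\|z-z'\|\,\|x_1-x_2\|$, i.e. $\sigma_1=\tfrac{p}{p-L}$. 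The third inequality has identical structure with $x_1=x(y,z)$, $x_2=x(y',z)$: the subtracted conditions now differ in the \emph{second} argument of $\nabla_x f$, and after inserting $\pm\nabla_x f(x_2,y)$ the cross term is controlled by $\|\nabla_x f(x_2,y)-\nabla_x f(x_2,y')\|\le L\|y-y'\|$ (smoothness in $y$). This yields $(p-L)\|x_1-x_2\|^2\le L\|y-y'\|\,\|x_1-x_2\|$, hence $\|x_1-x_2\|\le\tfrac{L}{p-L}\|y-y'\|$, which is dominated by the stated $\sigma_2=\tfrac{2(p+L)}{p-L}$ since $L\le 2(p+L)$, so the claimed (loose) constant follows at once.

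I expect the second inequality to be the main obstacle, because $x^*(z)$ minimizes the nonsmooth and \emph{a priori} nonconvex function $h(x,z)=\max_{y\in\mathcal{Y}}K(x,y;z)$ rather than a smooth strongly convex objective, so neither the stationarity argument above nor uniqueness is immediate. The key observation is that the quadratic penalty does not depend on $y$, whence $h(x,z)=\Phi(x)+\tfrac{p}{2}\|x-z\|^2$ with $\Phi(x)=\max_{y\in\mathcal{Y}}f(x,y)$. Because each $f(\cdot,y)$ is $L$-weakly convex under $L$-smoothness, the supremum $\Phi$ is $L$-weakly convex, and therefore $h(\cdot,z)$ is genuinely $(p-L)$-strongly convex even though $\Phi$ itself need be neither convex nor differentiable; this legitimizes both the uniqueness of $x^*(z)$ and the subgradient-free minimizer inequalities. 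I would then apply strong convexity at $x_1=x^*(z)$ and $x_2=x^*(z')$, namely $h(x_2,z)\ge h(x_1,z)+\tfrac{p-L}{2}\|x_1-x_2\|^2$ and its symmetric counterpart, and add them. After $\Phi$ cancels, the surviving quadratic differences telescope via $[\|x_2-z\|^2-\|x_2-z'\|^2]-[\|x_1-z\|^2-\|x_1-z'\|^2]=2\langle x_2-x_1,z'-z\rangle$, giving $(p-L)\|x_1-x_2\|^2\le p\langle x_2-x_1,z'-z\rangle\le p\|x_1-x_2\|\,\|z-z'\|$ and hence $\sigma_1=\tfrac{p}{p-L}$ once more. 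All the care is in establishing strong convexity of $h(\cdot,z)$ from weak convexity of the max; once that is secured, the estimate is as mechanical as the other two.
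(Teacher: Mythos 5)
Your proposal is correct, and it supplies something the paper itself does not: the paper gives no proof of this lemma, deferring entirely to \cite[Lemma B.2]{zhang2020single}. Your three arguments all check out. For the first and third bounds, the minimizer $x(y,z)$ of the $(p-L)$-strongly convex, differentiable function $K(\cdot,y;z)$ satisfies $\nabla_x f(x(y,z),y)+p\left(x(y,z)-z\right)=0$; subtracting two such identities and pairing with the difference of the minimizers, the cross term is bounded below by $-L\|x_1-x_2\|^2$ (Lipschitz gradient in $x$) in the first case and by $-L\|y-y'\|\,\|x_1-x_2\|$ (Lipschitz gradient in $y$, via Assumption \ref{assump:smoothness}) in the third, which yields $\sigma_1=\frac{p}{p-L}$ and in fact the sharper constant $\frac{L}{p-L}\le \frac{2(p+L)}{p-L}=\sigma_2$, so the stated bound follows a fortiori. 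For the second bound you correctly identified the genuine obstacle---$h(\cdot,z)=\Phi(\cdot)+\frac{p}{2}\|\cdot-z\|^2$ need not be differentiable since the inner maximum may not be unique---and resolved it properly: $\Phi$ is a maximum of $L$-weakly convex functions over the compact set $\mathcal{Y}$, hence $L$-weakly convex, so $h(\cdot,z)$ is $(p-L)$-strongly convex; the quadratic-growth inequality at the minimizer of a strongly convex function needs no gradients or subgradient calculus, and your telescoping identity $\left[\|x_2-z\|^2-\|x_2-z'\|^2\right]-\left[\|x_1-z\|^2-\|x_1-z'\|^2\right]=2\langle x_2-x_1,\,z'-z\rangle$ is exact, giving $\sigma_1$ again. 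Compared with the cited route, your proof is self-contained and tailored to the setting the paper actually uses (minimization over all of $\mathbb{R}^{d_x}$, where first-order conditions are equalities), whereas \cite{zhang2020single} work with a constrained primal variable and variational-inequality characterizations, which is also where the looser factor in their $\sigma_2$ originates; your derivation shows the paper's constants hold with room to spare in the third inequality.
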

Lemma \ref{lemma:prop_collection} follows from the results in \cite[Lemma B.2]{zhang2020single}. Therefore, we omit its proof here.

\begin{lemma}\label{lemma:Lip_of_d}
The dual function $d(y,z)$ is differentiable in $y$ and $L_d$-smooth in $y$, i.e., $\|\nabla_y d(y,z)-\nabla_y d(y',z)\|\leq L_d \|y-y'\|, \forall y,y'\in\mathcal{Y}$, where $L_d=L+L\sigma_2$.
\end{lemma}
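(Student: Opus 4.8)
The plan is to first establish the differentiability of $d(y,z)$ in $y$ via Danskin's (envelope) theorem, then compute its gradient explicitly and bound the gradient's variation using the smoothness of $f$ together with the Lipschitz estimate on the minimizer already recorded in Lemma \ref{lemma:prop_collection}.

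First I would observe that, since $p>L$, the map $x\mapsto K(x,y;z)=f(x,y)+\frac{p}{2}\|x-z\|^2$ is $(p-L)$-strongly convex, so the minimizer $x(y,z)=\arg\min_x K(x,y;z)$ is unique for every pair $(y,z)$. Because $K$ is jointly continuously differentiable and its $x$-minimizer is unique and (by Lemma \ref{lemma:prop_collection}) varies Lipschitz-continuously in $y$, Danskin's theorem applies and gives that $d(y,z)$ is differentiable in $y$ with $\nabla_y d(y,z)=\nabla_y K(x(y,z),y;z)$. Since the penalty $\frac{p}{2}\|x-z\|^2$ carries no $y$-dependence, this reduces to $\nabla_y d(y,z)=\nabla_y f(x(y,z),y)$, which is the key formula driving the estimate.

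Next I would bound the gradient difference at two points $y,y'\in\mathcal{Y}$. Writing $\|\nabla_y d(y,z)-\nabla_y d(y',z)\|=\|\nabla_y f(x(y,z),y)-\nabla_y f(x(y',z),y')\|$ and invoking the $L$-smoothness of $f$ from Assumption \ref{assump:smoothness}, this is at most $L\bigl(\|x(y,z)-x(y',z)\|+\|y-y'\|\bigr)$. Applying the third inequality of Lemma \ref{lemma:prop_collection}, namely $\|x(y,z)-x(y',z)\|\leq\sigma_2\|y-y'\|$, yields the bound $L(\sigma_2+1)\|y-y'\|=L_d\|y-y'\|$ with $L_d=L+L\sigma_2$, exactly as claimed.

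The hard part will be the clean justification of Danskin's theorem, specifically verifying that the unique minimizer $x(y,z)$ depends continuously (in fact Lipschitz-continuously) on $y$ so that the envelope gradient formula is valid; this is precisely what strong convexity together with Lemma \ref{lemma:prop_collection} supply. Once differentiability and the gradient identity are secured, the smoothness estimate itself is a routine application of the triangle inequality combined with the two available Lipschitz facts, so no nontrivial calculation remains.
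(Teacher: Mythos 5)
Your proposal is correct and follows essentially the same route as the paper: Danskin's theorem gives $\nabla_y d(y,z)=\nabla_y f(x(y,z),y)$, the gradient difference is bounded by $L\left(\|x(y,z)-x(y',z)\|+\|y-y'\|\right)$ via the smoothness of $f$, and Lemma~\ref{lemma:prop_collection} converts the first term into $\sigma_2\|y-y'\|$. The only cosmetic difference is that the paper splits the gradient difference by a triangle inequality and bounds the two pieces separately, while you invoke the joint smoothness of Assumption~\ref{assump:smoothness} in one step; your extra care in justifying Danskin's theorem via strong convexity of $K(\cdot,y;z)$ is a welcome addition rather than a deviation.
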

\begin{proof}[Proof of Lemma \ref{lemma:Lip_of_d}]
Based on  Danskin's Theorem, we have $\nabla_y d(y,z)=\nabla_y H(x(y,z),y;z)=\nabla_y f(x(y,z),y)$. Then, we have for any $y,y'\in\mathcal{Y}$
\begin{align*}
&\|\nabla_y d(y,z)-\nabla_y d(y',z)\|\\
=\;&\|\nabla_y H(x(y,z),y;z)-\nabla_y H(x(y',z),y';z)\|\\
\leq\; & \|\nabla_y H(x(y,z),y;z)-\nabla_y H(x(y,z),y';z)\|\\
& +\|\nabla_y H(x(y,z),y';z)-\nabla_y H(x(y',z),y';z)\|\\
\leq\; & L\|y-y'\|+ L\|x(y,z)-x(y',z)\|\\
\leq\; &(L+\sigma_2 L)\|y-y'\|,
\end{align*}
where the third step follows from the $L$-smoothness of $H(x,y;z)$ in $y$, and the last step follows from Lemma \ref{lemma:prop_collection}.
\end{proof}

\begin{lemma}\label{lemma:x_prop}
For the sequence $\{(x_k,y_k,z_k)\}$ derived from ZOB-SGDA, we have
\begin{align}\label{eq:prop4}
\mathbb{E}\left[\| x_{k+1}-x(y_k,z_k)\|^2\right]\leq 4\sigma_3^2\mathbb{E}\left[\|x_{k+1}-x_k\|^2\right]+\frac{L^2r_k^2d_x}{(p-L)^2},
\end{align}
where $\sigma_3=\frac{\sqrt{N}+\alpha(p-L)}{\alpha(p-L)}$.
\end{lemma}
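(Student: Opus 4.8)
The plan is to bound the distance from $x_{k+1}$ to the inner minimizer $x(y_k,z_k)$ by (i) controlling how far the current iterate $x_k$ sits from that minimizer via strong convexity, and then (ii) converting that displacement, which is governed by the \emph{full} gradient $\nabla_x K(x_k,y_k;z_k)$, into the \emph{block} step $\|x_{k+1}-x_k\|$ through the $1/N$ block-sampling identity. Throughout, write $\hat x := x(y_k,z_k)$, $\nabla := \nabla_x K(x_k,y_k;z_k)$, the full estimator $\bar G := G_x(x_k,y_k;z_k)$, and the block estimator $G := G_x^{\mathcal{I}_k}(x_k,y_k;z_k)$. Note that $x_k,y_k,z_k,\hat x,\nabla,\bar G$ are all $\mathcal{F}_k$-measurable, while $G$ (and hence $x_{k+1}$) depends on the random block $\mathcal{I}_k$.

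First I would use that $K(\cdot,y_k;z_k)$ is $(p-L)$-strongly convex in $x$ (valid since $p>L$), so $\hat x$ is its unique minimizer with $\nabla_x K(\hat x,y_k;z_k)=0$, which gives the standard bound $\|x_k-\hat x\|\le \frac{1}{p-L}\|\nabla\|$. Next I relate $\nabla$ to $\bar G$: in the BCGE the penalty term $p(x_k-z_k)$ is evaluated \emph{exactly}, so the estimator's error lives entirely in the finite-difference approximation of $\nabla_x f$. Applying Lemma~\ref{lemma:estimation_error} coordinate-wise (using the $L$-smoothness of $f$) yields $\|\bar G-\nabla\|\le \tfrac12\sqrt{d_x}\,L r_k$, and therefore $\|\nabla\|^2\le 2\|\bar G\|^2+\tfrac12 d_x L^2 r_k^2$.

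The key step is the variance-style identity induced by uniform block sampling: each coordinate lies in $\mathcal{I}_k$ with probability $b/d_x=1/N$, and $G$ agrees with $\bar G$ on $\mathcal{I}_k$ and vanishes elsewhere, so $\mathbb{E}[\|G\|^2\mid\mathcal{F}_k]=\tfrac1N\|\bar G\|^2$. Combining this with $\|x_{k+1}-x_k\|=\alpha\|G\|$ gives $\|\bar G\|^2=\tfrac{N}{\alpha^2}\,\mathbb{E}[\|x_{k+1}-x_k\|^2\mid\mathcal{F}_k]$. Chaining the three bounds produces
\[
\|x_k-\hat x\|^2\le \frac{2N}{\alpha^2(p-L)^2}\,\mathbb{E}\!\left[\|x_{k+1}-x_k\|^2\mid\mathcal{F}_k\right]+\frac{d_x L^2 r_k^2}{2(p-L)^2}.
\]

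Finally I would apply $\|x_{k+1}-\hat x\|^2\le 2\|x_k-\hat x\|^2+2\|x_{k+1}-x_k\|^2$, take $\mathbb{E}[\,\cdot\mid\mathcal{F}_k]$, substitute the previous display, and verify the bookkeeping $\frac{4N}{\alpha^2(p-L)^2}+2\le 4\sigma_3^2$. This holds because expanding $\sigma_3^2=\frac{(\sqrt N+\alpha(p-L))^2}{\alpha^2(p-L)^2}$ gives $4\sigma_3^2=\frac{4N}{\alpha^2(p-L)^2}+\frac{8\sqrt N}{\alpha(p-L)}+4$, so the extra nonnegative terms absorb the residual $2$. The leftover error is exactly $2\cdot\frac{d_x L^2 r_k^2}{2(p-L)^2}=\frac{L^2 r_k^2 d_x}{(p-L)^2}$, and taking total expectation completes the proof. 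The main obstacle is the middle step: correctly isolating the bias to the finite-difference part while treating the penalty as exact, and recognizing that the $\sqrt N$ in $\sigma_3$ arises precisely from the $1/N$ block-sampling identity rather than from any per-step worst-case estimate.
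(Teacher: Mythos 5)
Your proposal is correct and follows essentially the same route as the paper's proof: bound $\|x_k-x(y_k,z_k)\|$ by the full gradient of the $(p-L)$-strongly convex function $K(\cdot,y_k;z_k)$, split that gradient into the CGE plus a bias controlled by Lemma \ref{lemma:estimation_error}, convert $\|G_x(x_k,y_k;z_k)\|^2$ into $N\,\mathbb{E}[\|x_{k+1}-x_k\|^2\mid\mathcal{F}_k]/\alpha^2$ via the block-sampling identity, and finish with the Cauchy--Schwarz split and the absorption $\frac{4N}{\alpha^2(p-L)^2}+2\le 4\sigma_3^2$. The only cosmetic difference is that you derive the first bound directly from strong convexity, whereas the paper invokes Lemma 3.10 of \cite{zhang2020proximal} (which reduces to the same inequality since $\mathcal{X}=\mathbb{R}^{d_x}$).
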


\begin{proof}[Proof of Lemma \ref{lemma:x_prop}]
Denote $\mathcal{F}_k=\sigma(x_0, y_0, \mathcal{I}_0, \cdots, i_{k-1},x_k,y_k)$ as a filtration. By Lemma 3.10 in \cite{zhang2020proximal}, we have
\begin{equation}
\begin{aligned}\label{eq:x_prop_mid}
\|x_k-x(y_k,z_k)\|\leq\; & \frac{1}{\alpha(p-L)}\left\|x_k-\mathcal{P}_{\mathcal{X}}[x_k-\alpha\nabla_x H(x_k,y_k;z_k)]\right\|\\
=\; &\frac{1}{\alpha(p-L)}\left\|\alpha\nabla_x H(x_k,y_k;z_k)\right\|,
\end{aligned}
\end{equation}
where $\mathcal{X}=\mathbb{R}^{d_x}$ in our algorithm.
Then, we can get
\begin{align*}
&\|x_{k+1}-x(y_k,z_k)\|^2 \\
\leq\ & 2\|x_{k+1}-x_k\|^2+2\|x_k-x(y_k,z_k)\|^2\\
\leq\;& 2\|x_{k+1}-x_k\|^2+ \frac{2}{\alpha^2(p-L)^2}\left\|\alpha \nabla_x H(x_k,y_k;z_k)\right\|^2\\
\leq\ & 2\|x_{k+1}-x_k\|^2+ \frac{4}{\alpha^2(p-L)^2}\left\|\alpha G_x(x_k,y_k;z_k)\right\|^2\\
& +\frac{4}{(p-L)^2}\|\nabla_xH(x_k,y_k,z_k)-G_x(x_k,y_k;z_k)\|^2\\
\leq\; & 2\|x_{k+1}-x_k\|^2+ \frac{4N}{\alpha^2(p-L)^2}\mathbb{E}\left[\left.\left\|x_k-x_{k+1}\right\|^2\right| \mathcal{F}_k\right] + \frac{L^2r_k^2d_x}{(p-L)^2},
\end{align*}
where the first and third steps follow from the Cauchy-Schwarz inequality. The second step follows from Eq.(\ref{eq:x_prop_mid}). In the last step, we applied Lemma \ref{lemma:estimation_error} and $\mathbb{E}\left[\|\alpha G_x(x_k,y_k;z_k)\|^2\right]=N\mathbb{E}\left[\|x_{k+1}-x_k\|^2\right]$.
Taking the expectation of both sides of the above inequality leads to
\begin{align*}
\mathbb{E}\left[\|x_{k+1}-x(y_k,z_k)\|^2\right] \leq\; & \left(2+\frac{4N}{\alpha^2(p-L)^2}\right) \mathbb{E}\left[\|x_{k+1}-x_k\|^2\right]+\frac{L^2r_k^2d_x}{(p-L)^2}\\
\leq\;&4\sigma_3^2\mathbb{E}\left[\|x_{k+1}-x_k\|^2\right]+\frac{L^2r_k^2d_x}{(p-L)^2}.
\end{align*}

\end{proof}

\begin{lemma}\label{lemma:y_diff_to_x_diff}
For any $k\geq 0$, we have 
$$\mathbb{E}\left[\|y_{k+1}-y_+(z_k)\|^2\right]\leq \kappa \mathbb{E}\left[\|x_{k+1}-x_k\|^2\right]+\frac{\beta^2 L^2 r_k^2 d_x}{2},$$
where $\kappa=(8\sigma_3^2+2)\beta^2L^2$.
\end{lemma}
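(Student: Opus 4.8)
The plan is to reduce the dual gap $\|y_{k+1}-y_+(z_k)\|$ to the primal best-response gap $\|x_k - x(y_k,z_k)\|$, and then invoke Lemma \ref{lemma:x_prop} to control the latter. First I would exploit that both $y_{k+1}$ and $y_+(z_k)$ are projections onto $\mathcal{Y}$ of the common form $\mathcal{P}_\mathcal{Y}[y_k+\beta\nabla_y K(\cdot,y_k;z_k)]$, differing only in the primal point at which the gradient is evaluated ($x_k$ versus the exact inner minimizer $x(y_k,z_k)$). Since the Euclidean projection is non-expansive and $\nabla_y K(x,y;z)=\nabla_y f(x,y)$ is independent of the quadratic smoothing term $\frac{p}{2}\|x-z\|^2$, non-expansiveness together with the $L$-smoothness of $f$ (Assumption \ref{assump:smoothness}, applied with the common $y_k$) yields the pathwise estimate
\[
\|y_{k+1}-y_+(z_k)\|\leq \beta\|\nabla_y f(x_k,y_k)-\nabla_y f(x(y_k,z_k),y_k)\|\leq \beta L\|x_k - x(y_k,z_k)\|.
\]
Squaring and taking total expectation reduces the claim to bounding $\mathbb{E}[\|x_k-x(y_k,z_k)\|^2]$ by $(8\sigma_3^2+2)\mathbb{E}[\|x_{k+1}-x_k\|^2]+r_k^2 d_x/2$.

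Second, because Lemma \ref{lemma:x_prop} controls $\|x_{k+1}-x(y_k,z_k)\|^2$ rather than $\|x_k-x(y_k,z_k)\|^2$, I would bridge the two primal iterates with Young's inequality $\|a+b\|^2\leq 2\|a\|^2+2\|b\|^2$, writing $\|x_k-x(y_k,z_k)\|^2\leq 2\|x_k-x_{k+1}\|^2+2\|x_{k+1}-x(y_k,z_k)\|^2$. Substituting Lemma \ref{lemma:x_prop} into the second term produces the coefficient $2\cdot 4\sigma_3^2+2=8\sigma_3^2+2$ on $\mathbb{E}[\|x_{k+1}-x_k\|^2]$, which is exactly $\kappa/(\beta^2L^2)$, and leaves a residual bias term $2L^2 r_k^2 d_x/(p-L)^2$. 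The remaining task is to show this residual is at most $r_k^2 d_x/2$: under the standing choice $p\geq 3L$ in Theorem \ref{thm:zobsgda} we have $p-L\geq 2L$, hence $L^2/(p-L)^2\leq 1/4$ and $2L^2 r_k^2 d_x/(p-L)^2\leq r_k^2 d_x/2$. Multiplying the resulting primal bound through by $\beta^2 L^2$ then recovers $\kappa\,\mathbb{E}[\|x_{k+1}-x_k\|^2]+\beta^2 L^2 r_k^2 d_x/2$, as claimed.

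I expect the argument to be largely mechanical, with genuine care needed in only two places. The first is recognizing that the $y$-update evaluates the gradient at the running iterate $x_k$, so the relevant best-response gap is $\|x_k-x(y_k,z_k)\|$ and not $\|x_{k+1}-x(y_k,z_k)\|$; this is precisely why the triangle/Young step above is required before Lemma \ref{lemma:x_prop} becomes applicable, and it is what contributes the extra ``$+2$'' inside $\kappa$. The second is tracking constants so that the hypothesis $p\geq 3L$ is exactly what converts the $(p-L)^{-2}$-scaled bias coming out of Lemma \ref{lemma:x_prop} into the clean $\beta^2 L^2 r_k^2 d_x/2$ bias of the statement. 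Keeping all inequalities pathwise until the final expectation, and deferring to Lemma \ref{lemma:x_prop} for every source of randomness (the uniformly sampled block $\mathcal{I}_k$ and the finite-difference estimation error bounded via Lemma \ref{lemma:estimation_error}), avoids any separate variance computation in this lemma.
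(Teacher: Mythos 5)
Your proposal is correct and follows essentially the same route as the paper's proof: non-expansiveness of the projection plus Lipschitzness of $\nabla_y K$ in $x$ to get the pathwise bound $\beta^2L^2\|x_k-x(y_k,z_k)\|^2$, then Young's inequality to pass from $x_k$ to $x_{k+1}$, Lemma \ref{lemma:x_prop} for the best-response gap, and $p\geq 3L$ to absorb the $(p-L)^{-2}$ bias into $\beta^2L^2r_k^2d_x/2$. The constant accounting ($\kappa=(8\sigma_3^2+2)\beta^2L^2$) matches the paper exactly.
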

\begin{proof}[Proof of Lemma \ref{lemma:y_diff_to_x_diff}]
By the non-expansiveness of the projection operator, we have
\begin{align*}
&\|y_{k+1}-y_+(z_k)\|^2\\
=\;& \left\|\mathcal{P}_\mathcal{Y}[y_k+\beta\cdot \nabla_y H(x_k,y_k;z_k)]-\mathcal{P}_\mathcal{Y}[y_k+\beta\cdot \nabla_y H(x(y_k,z_k),y_k;z_k)]\right\|^2\\
\leq\; &\beta^2\|\nabla_y H(x(y_k,z_k),y_k;z_k)-\nabla_y H(x_{k},y_k;z_k)\|^2\\
\leq\; & \beta^2 L^2\|x_{k}-x(y_k,z_k)\|^2.
\end{align*}
where in the first step we used the non-expansiveness of projection operations and in the third step we used the $L$-smoothness of $H(x,y;z)$ in $x$. Then, taking the expectation of the above inequality leads to
\begin{align*}
&\mathbb{E}\left[\|y_{k+1}-y_+(z_k)\|^2\right]\\
\leq\; & \beta^2 L^2\mathbb{E}\left[\|x_{k}-x(y_k,z_k)\|^2\right]\\
\leq\;& 2\beta^2 L^2\mathbb{E}\left[\|x_{k+1}-x_k\|^2\right]+2\beta^2 L^2\mathbb{E}\left[\|x_{k+1}-x(y_k,z_k)\|^2\right]\\
\leq\;& \left(8\beta^2 L^2\sigma_3^2+2\beta^2L^2\right)\mathbb{E}\left[\|x_{k+1}-x_k\|^2\right]+\frac{2\beta^2 L^4 r_k^2d_x}{(p-L)^2}\\
\leq\;& \left(8\beta^2 L^2\sigma_3^2+2\beta^2L^2\right)\mathbb{E}\left[\|x_{k+1}-x_k\|^2\right]+\frac{\beta^2 L^2r_k^2 d_x}{2},
\end{align*}
where in the second step we applied Lemma \ref{lemma:x_prop} and in the last step we applied the inequality (\ref{eq:prop4}) and the condition $p\geq 3L$.
\end{proof}

\begin{lemma}\label{lemma:opti_x_to_y}
For any $k\geq 0$, we have
$$\beta (p-L)\|x^*(z_k)-x(y_+(z_k),z_k)\|^2\leq (1+\beta L +\beta L \sigma_2)R_y\|y_k-y_+(z_k)\|.$$
\end{lemma}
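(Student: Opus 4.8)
The plan is to convert the squared primal distance on the left-hand side into a one-step dual suboptimality gap, and then bound that gap by the dual progress $\|y_k - y_+(z_k)\|$. First I would exploit that $K(\cdot, y_+(z_k); z_k)$ is $(p-L)$-strongly convex in $x$ with minimizer $x(y_+(z_k), z_k)$ and optimal value $d(y_+(z_k), z_k)$. Evaluating the strong-convexity lower bound at the point $x^*(z_k)$ gives
\[
\frac{p-L}{2}\,\|x^*(z_k) - x(y_+(z_k), z_k)\|^2 \leq K(x^*(z_k), y_+(z_k); z_k) - d(y_+(z_k), z_k).
\]
Since $x^*(z_k)$ minimizes $h(\cdot, z_k) = \max_{y\in\mathcal{Y}} K(\cdot, y; z_k)$, we have $K(x^*(z_k), y_+(z_k); z_k) \leq h(x^*(z_k), z_k) = m(z_k)$, and by the minimax equality (strong convexity in $x$, concavity in $y$, compact $\mathcal{Y}$) $m(z_k) = \max_{y\in\mathcal{Y}} d(y, z_k) = d(y(z_k), z_k)$. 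Combining, the left-hand side is controlled by the dual gap $d(y(z_k), z_k) - d(y_+(z_k), z_k)$, so it remains to bound this gap.

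Next I would observe that $y_+(z_k)$ is exactly one projected gradient ascent step on the concave, $L_d$-smooth dual function $d(\cdot, z_k)$: by Danskin's theorem (as used in the proof of Lemma~\ref{lemma:Lip_of_d}), $\nabla_y d(y_k, z_k) = \nabla_y K(x(y_k, z_k), y_k; z_k)$, hence $y_+(z_k) = \mathcal{P}_{\mathcal{Y}}[y_k + \beta \nabla_y d(y_k, z_k)]$. To bound $d(y(z_k), z_k) - d(y_+(z_k), z_k)$, I would start from concavity of $d(\cdot, z_k)$ at $y_+(z_k)$, namely $d(y(z_k), z_k) - d(y_+(z_k), z_k) \leq \langle \nabla_y d(y_+(z_k), z_k),\, y(z_k) - y_+(z_k)\rangle$, and split the gradient as $\nabla_y d(y_+(z_k), z_k) = \nabla_y d(y_k, z_k) + \big(\nabla_y d(y_+(z_k), z_k) - \nabla_y d(y_k, z_k)\big)$. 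The first piece is handled by the variational inequality for the projection defining $y_+(z_k)$, giving $\langle \nabla_y d(y_k, z_k),\, y(z_k) - y_+(z_k)\rangle \leq \frac{1}{\beta}\langle y_+(z_k) - y_k,\, y(z_k) - y_+(z_k)\rangle \leq \frac{1}{\beta}\|y_+(z_k) - y_k\|\,D_y$; the second piece is handled by the $L_d$-smoothness of $d(\cdot, z_k)$ from Lemma~\ref{lemma:Lip_of_d}, giving $L_d \|y_+(z_k) - y_k\|\, D_y$. Here $D_y$ is the diameter of $\mathcal{Y}$, used via Cauchy--Schwarz to bound $\|y(z_k) - y_+(z_k)\|$.

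Putting the pieces together yields a dual-gap bound of the form $\big(\frac{1}{\beta} + L_d\big)\,D_y\,\|y_k - y_+(z_k)\|$; multiplying the strong-convexity inequality by $2\beta$ and substituting $L_d = L + L\sigma_2$ then reproduces the stated right-hand side $(1 + \beta L + \beta L\sigma_2)\,D_y\,\|y_k - y_+(z_k)\|$ up to the precise bookkeeping of constants (e.g.\ in the definition of $D_y$ and the one-step smoothness term). I expect the main obstacle to be this dual-gap step: one must reconcile the fact that the projection variational inequality naturally involves $\nabla_y d(y_k, z_k)$ (the gradient at the point where the step is taken), whereas concavity is most cleanly applied with $\nabla_y d(y_+(z_k), z_k)$ (the gradient at the new iterate); bridging the two through the $L_d$-Lipschitz continuity of $\nabla_y d$, together with correctly invoking Danskin's theorem to certify that $y_+(z_k)$ is a genuine gradient step on $d$ rather than on $K$ at a stale $x$, is the delicate part. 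The remaining manipulations (strong convexity, Cauchy--Schwarz, and the diameter bound) are routine.
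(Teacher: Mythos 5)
The paper never writes a proof of this lemma at all: it is imported verbatim from \cite[Lemma B.10]{zhang2020single}, so your attempt is being compared against a citation rather than an argument. Your route is the natural one and every individual step is valid: the $(p-L)$-strong convexity of $K(\cdot,y_+(z_k);z_k)$ bounds $\frac{p-L}{2}\|x^*(z_k)-x(y_+(z_k),z_k)\|^2$ by the gap $K(x^*(z_k),y_+(z_k);z_k)-d(y_+(z_k),z_k)$; the chain $K(x^*(z_k),y_+(z_k);z_k)\leq h(x^*(z_k),z_k)=m(z_k)=d(y(z_k),z_k)$ (the last equality is the minimax identity the paper itself invokes in Lemma \ref{lemma:proximal_descent}) reduces everything to the dual gap; and Danskin, the projection variational inequality, and the $L_d$-smoothness of $d(\cdot,z_k)$ from Lemma \ref{lemma:Lip_of_d} bound that gap by $\left(\frac{1}{\beta}+L_d\right)D_y\|y_k-y_+(z_k)\|$. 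Your reading of the (undefined) $D_y$ as the diameter of $\mathcal{Y}$ is also the correct one.

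The one genuine issue is the constant, and it is not mere bookkeeping. Chaining your own inequalities gives $\frac{p-L}{2}\|x^*(z_k)-x(y_+(z_k),z_k)\|^2\leq\left(\frac{1}{\beta}+L_d\right)D_y\|y_k-y_+(z_k)\|$, so after multiplying by $2\beta$ you obtain $2(1+\beta L+\beta L\sigma_2)D_y\|y_k-y_+(z_k)\|$ on the right-hand side --- exactly twice the stated bound --- and within your decomposition this factor cannot be removed, since the projection inequality inherently produces $\frac{1}{\beta}$ while strong convexity only produces $\frac{p-L}{2}$. The clean repair is one extra inequality: combining concavity of $K$ in $y$ (evaluated along $x(y(z_k),z_k)$) with strong convexity in $x$ yields the Bregman-type bound
\begin{align*}
d(y(z_k),z_k)\leq d(y_+(z_k),z_k)+\langle \nabla_y d(y_+(z_k),z_k),\,y(z_k)-y_+(z_k)\rangle-\tfrac{p-L}{2}\|x(y(z_k),z_k)-x(y_+(z_k),z_k)\|^2,
\end{align*}
and since $x(y(z_k),z_k)=x^*(z_k)$ (saddle-point identity, which follows from the same minimax equality plus uniqueness of the strongly convex minimizer), adding this to your strong-convexity bound upgrades the coefficient $\frac{p-L}{2}$ to $p-L$; multiplying by $\beta$ (not $2\beta$) then gives exactly $(1+\beta L+\beta L\sigma_2)D_y\|y_k-y_+(z_k)\|$. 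That said, your weaker constant is harmless for this paper: Lemma \ref{lemma:opti_x_to_y} is used only in Step 3 of the proof of Theorem \ref{thm:zobsgda}, where a factor of $2$ merely doubles $t_1$ and leaves the $\mathcal{O}\big((N/K)^{1/4}\big)$ rate unchanged.
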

Lemma \ref{lemma:opti_x_to_y} comes from \cite[Lemma B.10]{zhang2020single}.

\subsection{Formal Proof of Theorem \ref{thm:zobsgda}}
In this subsection, we provide the formal proof of Theorem \ref{thm:zobsgda}. The proof mainly contains three steps as follows.

\textbf{Step 1: Derive a Bound for the Stationarity Measure.} For ZOB-SGDA, we analyze the convergence of $\|\mathfrak{g}(x,y)\|$. Recall the definition of $\mathfrak{g}(x,y)$:
\begin{align*}
\mathfrak{g}(x,y)=
\begin{pmatrix}
    \mathfrak{g}_x(x,y) \\
    \mathfrak{g}_y(x,y)
\end{pmatrix}=
\begin{pmatrix}
    \nabla_x f(x,y) \\
    \frac{1}{\beta}\left(y-\mathcal{P}_\mathcal{Y} \left[y+\beta \nabla_y f(x,y)\right]\right)
\end{pmatrix},
\end{align*}
for any $(x,y)\in \mathbb{R}^{d_x}\times\mathcal{Y}$. Then, we provide a bound on the stationarity measure in the following lemma.
\begin{lemma}\label{lemma:stationarity_bound}
For any $\{x_k,y_k,z_k\}_{k\geq 0}$ derived from ZOB-SGDA, we have
\begin{align*}
\mathbb{E}\left[\|\mathfrak{g}(x_k,y_k)\|^2\right]\leq\;& \mathbb{E}\left[\left(\frac{3N}{\alpha^2}+8L^2\sigma_3^2+6p^2\right)\|x_{k+1}-x_k\|^2 \right]+\frac{5L^2r_k^2d_x}{4}\\
&  + \mathbb{E}\left[\frac{2}{\beta^2}\|y_k-y_+ (z_k)\|^2 + 6p^2\|x_{k+1}-z_k\|^2\right].
\end{align*}
\end{lemma}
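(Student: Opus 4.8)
The plan is to bound each component of $\mathfrak{g}(x_k,y_k)$ separately and then recombine. Recall $\|\mathfrak{g}(x_k,y_k)\|^2 = \|\mathfrak{g}_x(x_k,y_k)\|^2 + \|\mathfrak{g}_y(x_k,y_k)\|^2$, where $\mathfrak{g}_x = \nabla_x f(x_k,y_k)$ and $\mathfrak{g}_y = \frac{1}{\beta}(y_k - \mathcal{P}_\mathcal{Y}[y_k + \beta\nabla_y f(x_k,y_k)])$. The core difficulty is that the algorithm does not directly work with $f$ but with the smoothed function $K(x,y;z) = f(x,y) + \frac{p}{2}\|x-z\|^2$, so I must convert quantities involving $\nabla K$ back to quantities involving $\nabla f$, paying a penalty proportional to $p\|x_k - z_k\|$ for the gap $\nabla_x K - \nabla_x f = p(x_k - z_k)$.

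Let me sketch the two bounds. For the primal term, I would write $\nabla_x f(x_k,y_k) = \nabla_x K(x_k,y_k;z_k) - p(x_k-z_k)$, then relate $\nabla_x K(x_k,y_k;z_k)$ to the realized step. From the primal update~(\ref{eq:smoothed_primal_update}), $x_{k+1}-x_k = -\alpha\, G_x^{\mathcal{I}_k}(x_k,y_k;z_k)$, so after taking conditional expectation over the random block, $\mathbb{E}[\|\alpha G_x(x_k,y_k;z_k)\|^2] = N\,\mathbb{E}[\|x_{k+1}-x_k\|^2]$, and I would use Lemma~\ref{lemma:estimation_error} to absorb the bias of the block estimator into an $r_k^2 d_x$ term. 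This yields $\|\nabla_x f(x_k,y_k)\|^2 \leq \frac{3N}{\alpha^2}\|x_{k+1}-x_k\|^2 + 3p^2\|x_k-z_k\|^2 + \mathcal{O}(L^2 r_k^2 d_x)$ via Cauchy--Schwarz splitting into three pieces. For the dual term, I would compare the true one-step projected gradient of $f$ against $y_+(z_k)$, which is the projected ascent step using the optimal inner variable $x(y_k,z_k)$. Using non-expansiveness of $\mathcal{P}_\mathcal{Y}$ to handle the difference between $\nabla_y f(x_k,y_k)$ (uses $x_k$) and the map defining $y_+(z_k)$ (uses $x(y_k,z_k)$), together with the $L$-smoothness of $K$ in $x$, introduces a term $\|x_k - x(y_k,z_k)\|^2$ that Lemma~\ref{lemma:x_prop} converts to $\sigma_3^2\|x_{k+1}-x_k\|^2 + \mathcal{O}(r_k^2 d_x)$, plus the residual $\frac{2}{\beta^2}\|y_k - y_+(z_k)\|^2$ which is left explicit in the statement.

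Collecting terms, the $\frac{3N}{\alpha^2}\|x_{k+1}-x_k\|^2$ comes from the primal step expansion, the $8L^2\sigma_3^2\|x_{k+1}-x_k\|^2$ arises from applying Lemma~\ref{lemma:x_prop} inside the dual bound, the $6p^2$ coefficient gathers the two $3p^2$ penalties (one from the primal smoothing gap $\|x_k - z_k\|^2$ and one that I would re-express as $\|x_{k+1}-z_k\|^2$ at the cost of another $\|x_{k+1}-x_k\|^2$-type term folded into the leading constant), and the $\frac{5L^2r_k^2 d_x}{4}$ aggregates all the finite-difference bias contributions from Lemmas~\ref{lemma:estimation_error} and~\ref{lemma:x_prop} under the condition $p\geq 3L$.

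\textbf{The hard part} will be the bookkeeping in the dual term: carefully inserting $y_+(z_k)$ and $x(y_k,z_k)$ as intermediate reference points so that the genuinely irreducible mismatch is isolated as $\frac{2}{\beta^2}\|y_k - y_+(z_k)\|^2$ (to be controlled later by Lemma~\ref{lemma:y_diff_to_x_diff}), while everything else collapses into $\|x_{k+1}-x_k\|^2$ via Lemma~\ref{lemma:x_prop}. Matching the exact constants $3N/\alpha^2$, $8L^2\sigma_3^2$, and $6p^2$ requires being precise about which Cauchy--Schwarz weights are used at each split; I expect these constants to be slightly loose but chosen for clean downstream telescoping rather than tightness.
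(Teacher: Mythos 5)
Your proposal matches the paper's proof essentially step for step: the same three-way Cauchy--Schwarz split of $\nabla_x f(x_k,y_k) = G_x(x_k,y_k;z_k) + \left(\nabla_x K(x_k,y_k;z_k)-G_x(x_k,y_k;z_k)\right) - p(x_k-z_k)$ combined with the block identity $\mathbb{E}\left[\|\alpha G_x(x_k,y_k;z_k)\|^2\right] = N\,\mathbb{E}\left[\|x_{k+1}-x_k\|^2\right]$ and Lemma \ref{lemma:estimation_error}, the same re-expression of $3p^2\|x_k-z_k\|^2$ as $6p^2\|x_{k+1}-x_k\|^2+6p^2\|x_{k+1}-z_k\|^2$, and the same dual decomposition through $y_+(z_k)$ via non-expansiveness of $\mathcal{P}_\mathcal{Y}$, $L$-smoothness, and Lemma \ref{lemma:x_prop} (the paper packages this last piece as Lemma \ref{lemma:y_diff_to_x_diff}, exploiting that $\mathcal{P}_\mathcal{Y}\left[y_k+\beta\nabla_y f(x_k,y_k)\right]$ is exactly $y_{k+1}$). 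Your only slip is cosmetic and concerns downstream use, not this proof: the residual $\frac{2}{\beta^2}\|y_k-y_+(z_k)\|^2$ is later absorbed by the potential-drift bound (Lemma \ref{lemma:potential_one_step_drift}), whereas Lemma \ref{lemma:y_diff_to_x_diff} controls $\|y_{k+1}-y_+(z_k)\|^2$ and is invoked inside the present argument.
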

\begin{proof}[Proof of Lemma \ref{lemma:stationarity_bound}]
Based on the update of $x:\; x_{k+1}=x_k-\alpha\cdot G_x^{\mathcal{I}_k}(x_k,y_k;z_k)$, we have
\begin{align*}
&\mathbb{E}\left[\|\mathfrak{g}_x(x_k,y_k)\|^2\right]\\
=\; & \mathbb{E}\left[\|G_x(x_k,y_k;z_k)+\nabla_x H(x_k,y_k;z_k)-G_x(x_k,y_k;z_k)-p(x_k-z_k)\|^2\right]\\
\leq\;&\mathbb{E}\left[3\|G_x(x_k,y_k;z_k)\|^2+3\|G_x(x_k,y_k;z_k)-\nabla_x H(x_k,y_k;z_k)\|^2+3p^2\|x_k-z_k\|^2\right]\\
\leq\; & \mathbb{E}\left[\frac{3N}{\alpha^2}\|x_{k+1}-x_k\|^2+3p^2\|x_k-z_k\|^2\right]+\frac{3L^2r_k^2d_x}{4}\\
\leq\; & \mathbb{E}\left[ \left(\frac{3N}{\alpha^2}+6p^2\right)\|x_{k+1}-x_k\|^2+6p^2\|x_{k+1}-z_k\|^2\right]+\frac{3L^2r_k^2d_x}{4},
\end{align*}
where in the third step we applied the relation
$$\mathbb{E}\left[\frac{3N}{\alpha^2}\|x_{k+1}-x_k\|^2\right]=\mathbb{E}\left[3N\|G_x^{\mathcal{I}_k}(x_k,y_k)\|^2\right]=\mathbb{E}\left[3\|G_x(x_k,y_k)\|^2\right].$$

For the term $\|\mathfrak{g}_y(x_k,y_k)\|^2$, we can get
\begin{align*}
&\mathbb{E}\left[\|\mathfrak{g}_y(x_k,y_k)\|^2\right]\\
=\;& \frac{1}{\beta^2}\mathbb{E}\left[\|y_{k+1}-y_k\|^2\right]\\
\leq\; & \frac{2}{\beta^2}\mathbb{E}\left[\|y_{k+1}-y_+(z_k)\|^2\right]+\frac{2}{\beta^2}\mathbb{E}\left[\|y_+(z_k)-y_k\|^2\right]\\
\leq\; & \frac{8\kappa^2}{\beta^2}\mathbb{E}\left[\|x_{k+1}-x_k\|^2\right]+\frac{2}{\beta^2}\mathbb{E}\left[\|y_+(z_k)-y_k\|^2\right]+\frac{L^2r_k^2d_x}{2}\\
=\; & 8L^2\sigma_3^2\mathbb{E}\left[\|x_{k+1}-x_k\|^2\right]+\frac{2}{\beta^2}\mathbb{E}\left[\|y_+(z_k)-y_k\|^2\right]+\frac{L^2r_k^2d_x}{2},
\end{align*}
where we applied Lemma \ref{lemma:y_diff_to_x_diff} in the third step. Finally, combining it with the bound on $\mathbb{E}[\|\mathfrak{g}_x(x_k,y_k)\|^2]$ leads to the desired result.
\end{proof}

\textbf{Step 2: Derive a bound on the one-step drift of potential function.}   We provide a bound on the one-step drift in the following lemma.
\begin{lemma}\label{lemma:potential_one_step_drift}
Suppose the assumptions and conditions in Theorem \ref{thm:zobsgda} hold. For any $\{x_k,y_k,z_k\}$ derived from ZOB-SGDA, we have
\begin{equation}
\begin{aligned}\label{eq:potential_drift}
& \mathbb{E}\left[\phi_k-\phi_{k+1}\right]\\
\geq\; &  \mathbb{E}\left[\underbrace{\frac{1}{8\alpha}\|x_{k+1}-x_k\|^2+\frac{1}{8\beta}\|y_{k}-y_+(z_k)\|^2+\frac{p}{8\gamma}\|z_{k+1}-z_k\|^2}_{T_1}\right]\\
& - \mathbb{E}\left[\underbrace{24p\gamma\|x^*(z_k)-x(y_+(z_k),z_k)\|^2}_{T_2}\right] -\frac{L^3r_k^2\alpha^2d_x}{N}\\
& -12\beta^2L^2r_k^2\sigma_2^2 p\gamma d_x-\frac{\beta L^2r_k^2d_x}{8} -\frac{L^2r_k^2 d_x}{8N}.
\end{aligned}
\end{equation}
\end{lemma}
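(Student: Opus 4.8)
The plan is to decompose the one-step change of the potential $\phi_k = K(x_k,y_k;z_k) - 2d(y_k,z_k) + 2m(z_k)$ into its three constituent pieces and bound each separately, then reassemble them. Since the primal, dual, and auxiliary updates are applied sequentially, I would insert intermediate arguments within each piece to isolate the effect of updating $x$, then $y$, then $z$. Concretely, I would write $\phi_k-\phi_{k+1} = (K_k-K_{k+1}) + 2(d_{k+1}-d_k) + 2(m_k-m_{k+1})$, noting that the sign conventions are such that primal descent in $K$, dual ascent in $d$, and the downward drift of $m$ all contribute \emph{positively}, which is what will eventually furnish the nonnegative $T_1$ terms.

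For the $K$-drift I would telescope $K(x_k,y_k;z_k)-K(x_{k+1},y_{k+1};z_{k+1})$ across the three updates. The dominant contribution is the primal descent: since $p\geq 3L$ makes $K(\cdot,y_k;z_k)$ $(p-L)$-strongly convex and $(p+L)$-smooth in $x$, applying the descent inequality to $x_{k+1}=x_k-\alpha G_x^{\mathcal{I}_k}(x_k,y_k;z_k)$ and taking the conditional expectation over the random block using $\mathbb{E}[\|G_x^{\mathcal{I}_k}\|^2\mid\mathcal{F}_k]=\frac1N\|G_x\|^2$ and $\mathbb{E}[G_x^{\mathcal{I}_k}\mid\mathcal{F}_k]=\frac1N G_x$ yields a positive $\tfrac1\alpha\|x_{k+1}-x_k\|^2$-type term; the finite-difference bias enters through Lemma \ref{lemma:estimation_error} and produces the $\tfrac{L^3 r_k^2\alpha^2 d_x}{N}$ and $\tfrac{L^2 r_k^2 d_x}{8N}$ terms. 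The $y$-increment is handled by concavity of $K$ in $y$, and the $z$-increment is exact from the quadratic $\tfrac p2\|x-z\|^2$, producing the $\tfrac p\gamma\|z_{k+1}-z_k\|^2$ contribution together with a cross term in $\|x_{k+1}-z_k\|$.

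For the $2d$-drift I would split $d(y_{k+1},z_{k+1})-d(y_k,z_k)$ across the $y$- and $z$-updates. Using that $d$ is $L_d$-smooth in $y$ (Lemma \ref{lemma:Lip_of_d}) and that the realized dual step approximately ascends $d$ — the realized gradient $\nabla_y K(x_k,y_k;z_k)$ differs from the ideal $\nabla_y d(y_k,z_k)=\nabla_y K(x(y_k,z_k),y_k;z_k)$ only by $L\|x_k-x(y_k,z_k)\|$ — I would obtain a $\tfrac1\beta\|y_k-y_+(z_k)\|^2$ ascent term, with the coupling error bounded via Lemma \ref{lemma:x_prop} and Lemma \ref{lemma:y_diff_to_x_diff} (these deliver the $\sigma_2$- and $\beta$-weighted bias terms). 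The $z$-dependence of $d$ is Lipschitz and yields further $\|z_{k+1}-z_k\|$ cross terms. For the $2m$-drift I would use that $m$ is smooth with $\nabla m(z)=p(z-x^*(z))$ (Danskin's theorem applied to $h(x,z)$ together with the quadratic structure of $K$) and that $z_{k+1}-z_k=\gamma(x_{k+1}-z_k)$; expanding $m(z_k)-m(z_{k+1})$ to first order and comparing $x_{k+1}$ to $x^*(z_k)$ through the intermediate minimizer $x(y_+(z_k),z_k)$ produces precisely the negative term $T_2=24p\gamma\|x^*(z_k)-x(y_+(z_k),z_k)\|^2$, which is the quantity later controlled by Lemma \ref{lemma:opti_x_to_y}.

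Finally I would take total expectations, collect the three bounds, and absorb every cross term — among $\|x_{k+1}-x_k\|$, $\|y_k-y_+(z_k)\|$, $\|z_{k+1}-z_k\|$, and $\|x_{k+1}-z_k\|$ — into the positive $T_1$ terms via Young's inequality. The step-size conditions $\alpha\leq\tfrac{1}{p+10L+1}$, $\beta\leq\min\{\tfrac1{12L},\tfrac{\alpha^2(p-L)^2}{4L(\sqrt N+\alpha(p-L))^2}\}$, and $\gamma\leq\min\{\sqrt{1/(KN)},\tfrac1{36},\tfrac1{768p\beta}\}$ are exactly what guarantees that the surviving coefficients dominate $\tfrac1{8\alpha}$, $\tfrac1{8\beta}$, and $\tfrac{p}{8\gamma}$, respectively. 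The main obstacle I anticipate is the bookkeeping of this coupling: bridging the realized updates (which use the block estimator $G_x^{\mathcal{I}_k}$ evaluated at $x_k$) with the idealized quantities $x(y,z)$ and $x^*(z)$, while simultaneously tracking the block factor $N$, the finite-difference bias $r_k$, and the conditional expectation over $\mathcal{I}_k$. Closing the cross-term absorption with the \emph{stated} coefficients $\tfrac18$ (rather than merely up to unspecified constants) is where the precise step-size conditions must be invoked with care.
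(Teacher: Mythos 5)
Your proposal is correct and follows essentially the same route as the paper's proof: the same decomposition of $\phi_k-\phi_{k+1}$ into the $K$-, $d$-, and $m$-drifts (the paper's Lemmas \ref{lemma:primal_descent}, \ref{lemma:dual_ascent}, and \ref{lemma:proximal_descent}), the same use of Lemmas \ref{lemma:estimation_error}, \ref{lemma:prop_collection}, \ref{lemma:Lip_of_d}, \ref{lemma:x_prop}, and \ref{lemma:y_diff_to_x_diff} to control the block-estimator bias and the primal-dual coupling errors, and the same Young-inequality absorption of cross terms under the stated step-size conditions to reach the $\tfrac{1}{8\alpha}$, $\tfrac{1}{8\beta}$, $\tfrac{p}{8\gamma}$ coefficients. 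The only cosmetic deviations are that you invoke Danskin/Moreau-envelope differentiability of $m(z)$ where the paper uses a purely definitional inequality chain, and that $T_2$ in the paper actually emerges from combining the $z$-cross terms of the $d$- and $m$-drifts (followed by a triangle-inequality chain through $x^*(z_k)$ and $x(y_+(z_k),z_k)$) rather than from the $m$-drift alone; neither changes the substance of the argument.
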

\begin{proof}[Proof of Lemma \ref{lemma:potential_one_step_drift}] First, we provide the following lemma to characterize the descent in primal steps.
\begin{lemma}\label{lemma:primal_descent}
For any $k\geq 0$, we have
\begin{align*}
&H(x_k,y_k;z_k)-H(x_{k+1},y_{k+1};z_{k+1})\\
\geq\; & \left(\frac{1}{\alpha}-\frac{p+L+1}{2}\right)\|x_{k+1}-x_k\|^2 -\frac{L}{2}\| y_k-y_{k+1} \|^2+\frac{p}{2\gamma}\|z_{k+1}-z_k\|^2\\
& +\langle \nabla_y H(x_{k+1},y_k;z_k), \; y_k-y_{k+1}\rangle-\frac{L^2r_k^2d_{x}}{8N}.
\end{align*}
\end{lemma}
\begin{proof}[Proof of Lemma \ref{lemma:primal_descent}]
By the update of $x$, we use the smoothness of $H$ to get
\begin{equation}
\begin{aligned}\label{eq:primal_descent_r_mid1}
& H(x_{k+1},y_k;z_k)-H(x_k,y_k;z_k)\\
\leq\;& \langle \nabla_x H(x_k,y_k;z_k),\; x_{k+1}-x_k \rangle +\frac{p+L}{2}\|x_{k+1}-x_k\|^2\\
=\; &  \langle G_x(x_k,y_k;z_k),\; x_{k+1}-x_k \rangle +\frac{p+L}{2}\|x_{k+1}-x_k\|^2\\
& +\langle \nabla_x H(x_k,y_k;z_k)-G_x(x_k,y_k;z_k),\; x_{k+1}-x_k \rangle\\
\leq\; & \langle G_x(x_k,y_k;z_k),\; x_{k+1}-x_k \rangle +\frac{p+L+1}{2}\|x_{k+1}-x_k\|^2+\frac{L^2r_k^2d_{x}}{8N}\\
\leq\; & \left(-\frac{1}{\alpha}+\frac{p+L+1}{2}\right)\|x_{k+1}-x_k\|^2+\frac{L^2r_k^2d_{x}}{8N},
\end{aligned}    
\end{equation}
where in the second inequality we applied AM-GM inequality and the fact that only the entries of $\mathcal{I}_k$ in $x_{k+1}-x_k$ are nonzero. Similarly, we can use the $L$-smoothness of $H(x,y;z)$ to get
\begin{align}\label{eq:primal_descent_r_mid2}
& H(x_{k+1},y_k;z_k)-H(x_{k+1},y_{k+1};z_k)\notag\\
\geq\;& \langle \nabla_y H(x_{k+1},y_k;z_k), \; y_k-y_{k+1}\rangle -\frac{L}{2}\|y_{k+1}-y_k\|^2.
\end{align}
Based on the update of $z$, we have
\begin{align}\label{eq:primal_descent_r_mid3}
H(x_{k+1},y_{k+1};z_k)-H(x_{k+1},y_{k+1};z_{k+1})\geq \frac{p}{2\gamma}\|z_{k+1}-z_k\|^2.
\end{align}
Combining the results in (\ref{eq:primal_descent_r_mid1}), (\ref{eq:primal_descent_r_mid2}), and (\ref{eq:primal_descent_r_mid3}) leads to the final result.
\end{proof}

We further provide the following two lemmas to characterize the one-step drift of $d(y,z)$ and $m(z)$ in ZOB-SGDA. Their proofs follow from \cite[Lemma B.6 and Lemma B.7]{zhang2020single}.
\begin{lemma}\label{lemma:dual_ascent}
For any $k$, we have
\begin{align*}
&d(y_{k+1},z_{k+1})-d(y_k,z_k)\\
\geq\; & \langle \nabla_y H(x(y_k,z_k),y_k;z_k),\; y_{k+1}-y_k\rangle-\frac{L_d}{2}\|y_{k+1}-y_k\|^2\\
&+\frac{p}{2}\langle z_{k+1}-z_k,\;z_{k+1}+z_k-2x(y_{k+1},z_{k+1})\rangle.
\end{align*}
\end{lemma}
\begin{proof}[Proof of Lemma \ref{lemma:dual_ascent}]
Using the smoothness of $d(y,z)$ in $y$ provided in Lemma \ref{lemma:Lip_of_d}, we have
\begin{align*}
&d(y_{k+1},z_k)-d(y_k,z_k)\\
\geq\;& \langle \nabla_y d(y_k,z_k),y_{k+1}-y_k\rangle-\frac{L_d}{2}\|y_{k+1}-y_k\|^2\\
=\;& \langle \nabla_y H(x(y_k,z_k),y_k;z_k),y_{k+1}-y_k\rangle-\frac{L_d}{2}\|y_{k+1}-y_k\|^2,
\end{align*}
where in the second step we used $ \nabla_y d(y_k,z_k)=\nabla_y H(x(y_k,z_k),y_k;z_k)$. Also, we have
\begin{align*}
& d(y_{k+1},z_{k+1})-d(y_{k+1},z_k)\\
=\; & H(x(y_{k+1},z_{k+1}),y_{k+1};z_{k+1})-H(x(y_{k+1},z_{k}),y_{k+1};z_{k})\\
\geq\; & H(x(y_{k+1},z_{k+1}),y_{k+1};z_{k+1})-H(x(y_{k+1},z_{k+1}),y_{k+1};z_{k})\\
=\;& \frac{p}{2}\|x_{k+1}(y_{k+1},z_{k+1})-z_{k+1}\|^2-\frac{p}{2}\|x_{k+1}(y_{k+1},z_{k+1})-z_k\|^2\\
=\;& \frac{p}{2}\langle z_{k+1}-z_k,\; z_{k+1}+z_k- 2x(y_{k+1},z_{k+1})\rangle.
\end{align*}
Finally, combining the above two inequalities leads to the desired result.

\end{proof}

\begin{lemma}\label{lemma:proximal_descent}
For any $k$, we have
\begin{align*}
m(z_{k+1})-m(z_k)\leq \frac{p}{2}\langle z_{k+1}-z_k, \; z_{k+1}+z_k-2x(y(z_{k+1}),z_k)\rangle,
\end{align*}
where $y(z_{k+1})$ is an arbitrary element in $\mathcal{Y}(z_{k+1})$.
\end{lemma}
\begin{proof}
For any $y(z)\in\mathcal{Y}(z)$, we have  
$$m(z)=\max_{y\in \mathcal{Y}}d(y,z)= d(y(z),z).$$
Therefore, we have
\begin{align*}
&m(z_{k+1})-m(z_k)\\
\leq\; & d(y(z_{k+1}),z_{k+1})- d(y(z_{k+1}),z_k)\\
=\;& H(x(y(z_{k+1}),z_{k+1}),y(z_{k+1});z_{k+1})-H(x(y(z_{k+1}),z_{k}),y(z_{k+1});z_{k})\\
\leq\;& H(x(y(z_{k+1}),z_{k}),y(z_{k+1});z_{k+1})-H(x(y(z_{k+1}),z_{k}),y(z_{k+1});z_{k})\\
=\;& \frac{p}{2}\langle z_{k+1}-z_k,\; z_{k+1}+z_k-2x(y(z_{k+1}),z_k) \rangle,
\end{align*}
where in the first step and third step we used the definitions of $y(z)$ and $x(y,z)$, respectively.
\end{proof}

Now we can bound the one-step drift of the potential function. Using the results in Lemmas \ref{lemma:primal_descent}, \ref{lemma:dual_ascent}, and \ref{lemma:proximal_descent}, we have
\begin{align*}
&\phi_k-\phi_{k+1}\\
\geq\; & \left(\frac{1}{\alpha}-\frac{p+L+1}{2}\right)\|x_{k+1}-x_k\|^2-\frac{L+2L_d}{2}\|y_{k+1}-y_k\|^2\\
&+\frac{p}{2\gamma}\|z_{k+1}-z_k\|^2 +\langle \nabla_y H(x_{k+1},y_k;z_k), y_{k+1}-y_k\rangle\\
& +2\langle \nabla_yH(x(y_k,z_k),y_k;z_k)-\nabla_y H(x_{k+1},y_k;z_k),\; y_{k+1}-y_k\rangle\\
& + 2p(z_{k+1}-z_k)^T\left( x(y(z_{k+1}),z_k)-x(y_{k+1},z_{k+1}) \right)-\frac{L^2r_k^2d_{x}}{8N}.
\end{align*}
Using the property of the projection operator:
$$\langle y_{k+1}-(y_k+\beta \nabla_y H(x_{k},y_k;z_k)),\; y_{k+1}-y \rangle\leq 0,$$ 
for any $y\in\mathcal{Y}$, and setting $y=y_k$, we have 
\begin{align*}
\langle \nabla_y H(x_{k},y_k;z_k), y_{k+1}-y_k\rangle\geq \frac{1}{\beta}\|y_{k+1}-y_k\|^2.
\end{align*}
Therefore,
\begin{align*}
&\langle \nabla_y H(x_{k+1},y_k;z_k), y_{k+1}-y_k\rangle\\
=\;& \langle \nabla_y H(x_{k},y_k;z_k), y_{k+1}-y_k\rangle+\langle \nabla_y H(x_{k+1},y_k;z_k)-\nabla_y H(x_{k},y_k;z_k), y_{k+1}-y_k\rangle\\
\geq\;&\frac{1}{\beta}\|y_{k+1}-y_k\|^2-\frac{L}{2}\|y_{k+1}-y_k\|^2-\frac{L}{2}\|x_{k+1}-x_k\|^2\\
=\;& \left(\frac{1}{\beta}-\frac{L}{2}\right)\|y_{k+1}-y_k\|^2-\frac{L}{2}\|x_{k+1}-x_k\|^2,
\end{align*}
where the first inequality follows from AM-GM inequality and the smoothness of $H$. Then, we can further get
\begin{equation}
\begin{aligned}\label{eq:potential_bound1}
&\phi_k-\phi_{k+1}\\
\geq\; & \left(\frac{1}{\alpha}-\frac{p+2L+1}{2}\right)\|x_{k+1}-x_k\|^2+\left( \frac{1}{\beta}-(L+L_d) \right)\|y_{k+1}-y_k\|^2+\frac{p}{2\gamma}\|z_{k+1}-z_k\|^2 \\
& +2\langle \nabla_yH(x(y_k,z_k),y_k;z_k)-\nabla_y H(x_{k+1},y_k;z_k),\; y_{k+1}-y_k\rangle\\
& + 2p(z_{k+1}-z_k)^T\left( x(y(z_{k+1}),z_k)-x(y_{k+1},z_{k+1}) \right)-\frac{L^2r_k^2d_{x}}{8N}.
\end{aligned}    
\end{equation}

Besides, we have
\begin{equation}
\begin{aligned}\label{eq:bound_term1}
&2p(z_{k+1}-z_k)^T\left( x(y(z_{k+1}),z_k)-x(y_{k+1},z_{k+1}) \right)\\
=\; & 2p(z_{k+1}-z_k)^T\left( x(y(z_{k+1}),z_k)-x(y(z_{k+1}),z_{k+1})\right)\\
& +2p(z_{k+1}-z_k)^T\left( x(y(z_{k+1}),z_{k+1})-x(y_{k+1},z_{k+1})  \right)\\
\geq\; & -2p\sigma_1 \|z_{k+1}-z_k\|^2+2p(z_{k+1}-z_k)^T\left( x(y(z_{k+1}),z_{k+1})-x(y_{k+1},z_{k+1})  \right)\\
\geq\; &  -2p\sigma_1\|z_{k+1}-z_k\|^2-\frac{p}{6\gamma}\|z_{k+1}-z_k\|^2-6p\gamma\|x(y(z_{k+1}),z_{k+1})-x(y_{k+1},z_{k+1})\|^2,
\end{aligned}
\end{equation}
where the second step follows from Lemma \ref{lemma:prop_collection} and the third step follows from the AM-GM inequality. We also have the bound:
\begin{equation}
\begin{aligned}\label{eq:bound_term2}
&\mathbb{E}\left[2\langle \nabla_yH(x(y_k,z_k),y_k;z_k)-\nabla_y H(x_{k+1},y_k;z_k),\; y_{k+1}-y_k\rangle\right]\\
\geq\ & \mathbb{E}\left[-2L\|x_{k+1}-x(y_k,z_k)\|\cdot \|y_{k+1}-y_k\|\right]\\
\geq\; & \mathbb{E}\left[-L\sigma_3^2\|y_{k+1}-y_k\|^2-L\sigma_3^{-2}\|x_{k+1}-x(y_k,z_k)\|^2\right]\\
\geq\; & \mathbb{E}\left[-L\sigma_3^2\|y_{k+1}-y_k\|^2-4L\|x_{k+1}-x_k\|^2\right]-\frac{L^3r_k^2\alpha^2d_x}{\left(\sqrt{N}+\alpha (p-L)\right)^2}\\
\geq\; & \mathbb{E}\left[-L\sigma_3^2\|y_{k+1}-y_k\|^2-4L\|x_{k+1}-x_k\|^2\right]-\frac{L^3r_k^2\alpha^2d_x}{N},
\end{aligned}    
\end{equation}
where the first step follows from the smoothness of $H(x,y;z)$ and the third step follows from Lemma \ref{lemma:x_prop}. In the last step, we used the fact that $p>L$. Taking the expectation on both sides of (\ref{eq:potential_bound1}) and combining it with the bounds in (\ref{eq:bound_term1}) and (\ref{eq:bound_term2}), we can get
\begin{equation}
\begin{aligned}\label{eq:potential_bound2}
&\mathbb{E}\left[\phi_k-\phi_{k+1}\right]\\
\geq\; & \mathbb{E}\left[\left(\frac{1}{\alpha}-\frac{p+2L+1}{2}-4L\right)\|x_{k+1}-x_k\|^2+\left(\frac{1}{\beta}-L-L_d-L\sigma_3^2 \right)\|y_{k+1}-y_k\|^2\right]\\
& +\mathbb{E}\left[\left(\frac{p}{2\gamma}-2p\sigma_1-\frac{p}{6\gamma}\right)\|z_{k+1}-z_k\|^2\right]\\
& -\mathbb{E}\left[6p\gamma\|x(y(z_{k+1}),z_{k+1})-x(y_{k+1},z_{k+1})\|^2\right]-\frac{L^3r_k^2\alpha^2d_x}{N}-\frac{L^2r_k^2 d_x}{8N}.
\end{aligned}    
\end{equation}
Using the conditions $\beta\leq \min\left\{ \frac{1}{12L},\frac{\alpha^2(p-L)^2}{4L(\sqrt{N}+\alpha(p-L))^2} \right\}$, we have
$$L+L_d\leq 6L\leq \frac{1}{2\beta},$$
and
$$L\sigma_3^2=\frac{L(\sqrt{N}+\alpha(p-L))^2}{\alpha^2(p-L)^2}\leq \frac{1}{4\beta}.$$
Therefore, we have
\begin{align}\label{eq:beta_bound}
\frac{1}{\beta}-L-L_d-L\sigma_3^2\geq\frac{1}{4\beta}.
\end{align}

Using Lemma \ref{lemma:y_diff_to_x_diff}, we have the bound on $\mathbb{E}[\|y_{k+1}-y_k\|^2]$:
\begin{align*}
&\mathbb{E}\left[\|y_{k+1}-y_k\|^2\right]\\
\geq\ & \mathbb{E}\left[\frac{1}{2}\|y_k-y_+(z_k)\|^2-\|y_{k+1}-y_+(z_k)\|^2\right]\\
\geq\ & \mathbb{E}\left[\frac{1}{2}\|y_k-y_+(z_k)\|^2-\kappa\|x_{k+1}-x_k\|^2\right]-\frac{\beta^2L^2r_k^2d_x}{2}.
\end{align*}
As for the bound on $\mathbb{E}[\|x^*(z_{k+1})-x(y_{k+1},z_{k+1})\|^2]$, we have
\begin{align*}
&\mathbb{E}\left[\|x^*(z_{k+1})-x(y_{k+1},z_{k+1})\|^2\right]\\
\leq\; & \mathbb{E}\left[2\|x^*(z_{k+1})-x^*(z_k)+x^*(z_k)-x(y_+(z_k),z_k)\|^2\right.\\
& \left.+2\|x(y_+(z_k),z_k)-x(y_{k+1},z_k)+x(y_{k+1},z_k)-x(y_{k+1},z_{k+1})\|^2\right]\\
\leq\ & \mathbb{E}\left[4\|x^*(z_{k+1})-x^*(z_k)\|^2+4\|x^*(z_k)-x(y_+(z_k),z_k)\|^2\right]\\
& + \mathbb{E}\left[4\|x(y_+(z_k),z_k)-x(y_{k+1},z_k)\|^2+4\|x(y_{k+1},z_k)-x(y_{k+1},z_{k+1})\|^2\right]\\
\leq\; & \mathbb{E}\left[4\sigma_1^2\|z_{k+1}-z_k\|^2+4\|x^*(z_k)-x(y_+(z_k),z_k)\|^2\right]\\
& +\mathbb{E}\left[4\sigma_2^2\|y_+(z_k)-y_{k+1}\|^2+4\sigma_1^2\|z_k-z_{k+1}\|^2\right]\\
\leq\; & \mathbb{E}\left[8\sigma_1^2\|z_{k+1}-z_k\|^2+4\|x^*(z_k)-x(y_+(z_k),z_k)\|^2\right.\\
&\left.+4\sigma_2^2\kappa\|x_{k+1}-x_k\|^2\right]+2\beta^2L^2r_k^2\sigma_2^2 d_x,
\end{align*}
where the first two steps follow from the Cauchy-Schwarz inequality and the last two steps follow from Lemma \ref{lemma:prop_collection} and \ref{lemma:y_diff_to_x_diff}.
Combining the above two bounds with (\ref{eq:beta_bound}) and (\ref{eq:potential_bound2}) leads to
\begin{align*}
& \mathbb{E}\left[\phi_k-\phi_{k+1}\right]\\
\geq\; &  \mathbb{E}\left[\left(\frac{1}{\alpha}-\frac{p+2L+1}{2}-4L-\frac{\kappa}{4\beta }-24p\gamma\sigma_2^2\kappa \right)\|x_{k+1}-x_k\|^2\right]\\
& +\mathbb{E}\left[\frac{1}{8\beta}\|y_{k}-y_+(z_k)\|^2+\left(\frac{p}{2\gamma}-2p\sigma_1-\frac{p}{6\gamma}-48p\gamma \sigma_1^2\right)\|z_{k+1}-z_k\|^2\right]\\
& -\mathbb{E}\left[24p\gamma\|x^*(z_k)-x(y_+(z_k),z_k)\|^2\right]\\
& -\frac{L^3r_k^2\alpha^2d_x}{N}-12p\gamma d_x\sigma_2^2\beta^2L^2r_k^2-\frac{\beta L^2r_k^2d_x}{8} -\frac{L^2r_k^2 d_x}{8N}.
\end{align*}
Based on the condition $\alpha\leq  \frac{1}{p+10L+1}$, we have $\alpha<\frac{1}{p+10L}\leq \frac{1}{13L}$ and $\frac{p+10L+1}{2}\leq \frac{1}{2\alpha}$. Using the condition $\beta\leq \min\left\{ \frac{1}{12L},\frac{\alpha^2(p-L)^2}{4L(\sqrt{N}+\alpha(p-L))^2} \right\}$, we have
$$\frac{\kappa}{4\beta}=2\beta L^2\sigma_3^2+\frac{1}{2}\beta L^2 < L < \frac{1}{8\alpha}.$$
Furthermore, using the condition $\gamma\leq \frac{1}{768p\beta}$, we can get
$$24p\gamma \sigma_2^2\kappa\leq \frac{\sigma_2^2\kappa}{32\beta}\leq \frac{\kappa}{2\beta}\leq \frac{1}{4\alpha}.$$
Therefore, we have
$$\frac{1}{\alpha}-\frac{p+2L+1}{2}-4L-\frac{\kappa}{4\beta }-24p\gamma\sigma_2^2\kappa > \frac{1}{8\alpha}.$$

Using the condition $\gamma\leq \frac{1}{36}$, we have $2p\sigma_1\leq 3p \leq \frac{p}{12\gamma}$ and $48p
\gamma \sigma_1^2 \leq 3p \leq \frac{p}{12\gamma}$. Thus, we have
$$\frac{p}{2\gamma}-2p\sigma_1-\frac{p}{6\gamma}-48p\gamma \sigma_1^2> \frac{p}{8\gamma}. $$

Then, we can combine the above results to get
\begin{align*}
& \mathbb{E}\left[\phi_k-\phi_{k+1}\right]\\
\geq\; &  \mathbb{E}\left[\frac{1}{8\alpha}\|x_{k+1}-x_k\|^2+\frac{1}{8\beta}\|y_{k}-y_+(z_k)\|^2+\frac{p}{8\gamma}\|z_{k+1}-z_k\|^2\right]\\
& - \mathbb{E}\left[24p\gamma\|x^*(z_k)-x(y_+(z_k),z_k)\|^2\right] -\frac{L^3r_k^2\alpha^2d_x}{N}\\
& -12p\gamma \beta^2L^2r_k^2\sigma_2^2 d_x-\frac{\beta L^2r_k^2d_x}{8} -\frac{L^2r_k^2 d_x}{8N}.
\end{align*}
\end{proof}

\textbf{Step 3: Combine the above two bounds to get the convergence rate.}
Then, we are ready to prove the final result. We consider two situations in (\ref{eq:potential_drift}): (1) $\frac{1}{2}T_1\leq T_2$, (2) $\frac{1}{2}T_1 > T_2$. In the first case, we have
\begin{align*}
&\frac{1}{16\alpha}\|x_{k+1}-x_k\|^2+\frac{1}{16\beta}\|y_{k}-y_+(z_k)\|^2+\frac{p}{16\gamma}\|z_{k+1}-z_k\|^2\\
\leq\; & 24p\gamma\|x^*(z_k)-x(y_+(z_k),z_k)\|^2.
\end{align*}

Let $t_1=384p R_y\frac{1+\beta L+\beta L\sigma_2}{p-L}$. Then, using Lemma \ref{lemma:opti_x_to_y}, we have
\begin{align*}
&\|y_{k}-y_+(z_k)\|^2\\
\leq\;& 384p\gamma\beta \|x^*(z_k)-x(y_+(z_k),z_k)\|^2\\
\leq\;& 384p\gamma\beta R_y\frac{1+\beta L+\beta L\sigma_2}{\beta(p-L)}\|y_{k}-y_+(z_k)\|,
\end{align*}
which leads to $\|y_{k}-y_+(z_k)\|\leq t_1 \gamma$. Therefore, we can get
\begin{align*}
&\|z_{k+1}-z_k\|^2\\
\leq\;&  384\gamma^2\|x^*(z_k)-x(y_+(z_k),z_k)\|^2\\
\leq\;&384\gamma^2 R_y\frac{1+\beta L+\beta L\sigma_2}{\beta(p-L)}\|y_{k}-y_+(z_k)\|\\
=\;& \frac{t_1^2\gamma^3}{p\beta},
\end{align*}
and
\begin{align*}
&\|x_{k+1}-x_k\|^2\\
\leq\;& 384p\alpha\gamma \|x^*(z_k)-x(y_+(z_k),z_k)\|^2\\
\leq\;& 384p\alpha\gamma R_y\frac{1+\beta L+\beta L\sigma_2}{\beta(p-L)}\|y_{k}-y_+(z_k)\|\\
\leq\;& \frac{\alpha t_1^2\gamma^2}{\beta}.
\end{align*}
Combining the above three inequalities with Lemma \ref{lemma:stationarity_bound}, we can get
\begin{align*}
&\frac{1}{K}\sum_{k=0}^{K-1}\mathbb{E}\left[\|\mathfrak{g}(x_k,y_k)\|^2\right]\\
\leq\;& \left(\frac{3N}{\alpha^2}+8L^2\sigma_3^2+6p^2\right)\frac{\alpha t_1^2\gamma^2}{\beta} + \frac{2t_1\gamma^2}{\beta^2}+ \frac{6p^2t_1^2\gamma}{p\beta}+\frac{5L^2d_x\sum_{k=0}^{K-1}r_k^2}{4K}.
\end{align*}
By $\gamma\leq \frac{1}{\sqrt{KN}}$ and $\sum_{k=0}^{K-1} r_k^2\leq \frac{1}{b}$, we can get $\frac{1}{K}\sum_{k=0}^{K-1}\mathbb{E}\left[\|\mathfrak{g}(x_k,y_k)\|^2\right]\leq\mathcal{O}\left(\frac{N}{K}+\sqrt{\frac{N}{K}}\right)$, which leads to
\begin{align*}
\frac{1}{K}\sum_{k=0}^{K-1}\mathbb{E}\left[\|\mathfrak{g}(x_k,y_k)\|\right]\leq\mathcal{O}\left(\sqrt{\frac{N}{K}+\sqrt{\frac{N}{K}}}\right)= \mathcal{O}\left(\left(\frac{N}{K}\right)^{\frac{1}{4}}\right).
\end{align*}

In the second case, we have
\begin{align*}
&\frac{1}{16\alpha}\|x_{k+1}-x_k\|^2+\frac{1}{16\beta}\|y_{k}-y_+(z_k)\|^2+\frac{p}{16\gamma}\|z_{k+1}-z_k\|^2\\
\geq\; & 24p\gamma\|x^*(z_k)-x(y_+(z_k),z_k)\|^2.
\end{align*}
Then, according to (\ref{eq:potential_drift}), we have
\begin{align*}
& \mathbb{E}\left[\phi_k-\phi_{k+1}\right]\\
\geq\; &  \mathbb{E}\left[\frac{1}{16\alpha}\|x_{k+1}-x_k\|^2+\frac{1}{16\beta}\|y_{k}-y_+(z_k)\|^2+\frac{p\gamma}{16}\|x_{k+1}-z_k\|^2\right]\\
& -\frac{L^3r_k^2\alpha^2d_x}{N}-12\beta^2L^2r_k^2\sigma_2^2 p\gamma d_x-\frac{\beta L^2r_k^2d_x}{8} - \frac{L^2r_k^2d_x}{8N}\\
\geq\; & t_2\mathbb{E}\left[\|\mathfrak{g}(x_k,y_k)\|^2\right]-\frac{5t_2L^2r_k^2d_x}{4}-\frac{L^3r_k^2\alpha^2d_x}{N}-12\beta^2L^2r_k^2\sigma_2^2 d_x-\frac{\beta L^2r_k^2d_x}{8} -\frac{L^2r_k^2d_x}{8N},
\end{align*}
where $t_2=\min \left\{ \frac{1}{16\alpha\left(\frac{3N}{\alpha^2}+8L^2\sigma_3^2+6p^2\right)},\frac{\beta}{32}, \frac{\gamma}{96p} \right\}$. Consequently, we can obtain
\begin{align*}
\mathbb{E}\left[\|\mathfrak{g}(x_k,y_k)\|^2\right]\leq \;& \frac{ \mathbb{E}\left[\phi_k-\phi_{k+1}\right]}{t_2}+\frac{5L^2r_k^2d_x}{4}+\frac{L^3r_k^2\alpha^2d_x}{N t_2}\\
& +\frac{12\beta^2L^2r_k^2\sigma_2^2 d_x}{t_2}+\frac{\beta L^2r_k^2d_x}{8t_2} +\frac{L^2r_k^2d_x}{8Nt_2}.
\end{align*}
Taking the summation of the above inequality from $k=0$ to $K-1$ leads to
\begin{align*}
& \frac{1}{K}\sum_{k=0}^{K-1}\mathbb{E}\left[\|\mathfrak{g}(x_k,y_k)\|^2\right]\\
\leq \;&\frac{ \phi_0-\underline{f}}{K t_2}+\frac{5L^2d_x\sum_{k=0}^{K-1}r_k^2}{4K}+\frac{L^3\alpha^2d_x\sum_{k=0}^{K-1}r_k^2}{N t_2K}\\
&+\frac{12\beta^2L^2\sigma_2^2 p\gamma d_x\sum_{k=0}^{K-1}r_k^2}{t_2K}+\frac{\beta L^2 d_x\sum_{k=0}^{K-1}r_k^2}{8t_2K} + \frac{L^2d_x\sum_{k=0}^{K-1}r_k^2}{8Nt_2K}.
\end{align*}
Substituting $\gamma\leq\frac{1}{\sqrt{KN}}$ and $\sum_{k=0}^{K-1}r_k^2\leq \frac{1}{b}$ into the above inequality, we can get
\begin{align*}
\frac{1}{K}\sum_{k=0}^{K-1}\mathbb{E}\left[\|\mathfrak{g}(x_k,y_k)\|^2\right]\leq \;&   \mathcal{O}\left(\sqrt{\frac{N}{K}}\right) .
\end{align*}
Finally, we have $$\min_{k=0,\cdots,K-1}\mathbb{E}\left[\|\mathfrak{g}(x_k,y_k)\|\right]\leq \frac{1}{K}\sum_{k=0}^{K-1}\mathbb{E}\left[\|\mathfrak{g}(x_k,y_k)\|\right]\leq \mathcal{O}\left(\left(\frac{N}{K}\right)^{\frac{1}{4}}\right),$$
which is the desired result and finishes the proof.

\section{Proof of Lemma \ref{lemma:stationary_to_kkt}} \label{app:proof_of_lemma_stk}
Denote $\delta=\min_{j\in\mathcal{J}}((\overline{y})_j-(y)_j)$ and $\overline{m}=\max_{j\in\mathcal{J}} (\overline{y})_j$. We also denote $\hat{y}=y+\beta c(x)$ and $\mathcal{P}_j[\cdot]=\mathcal{P}_{[0,(\overline{y})_j]}[\cdot]$ for simplicity. By the definitions of the KKT gap and stationarity measure, we have $\left\| \nabla_x f(x,y)\right\|= \|\mathfrak{g}_x(x,y)\|\leq \|\mathfrak{g}(x,y)\|.$ 

For the primal infeasibility and complementarity gap, we consider three cases for $(\hat{y})_j$: (1) $0\leq (\hat{y})_j \leq (\overline{y})_j$; (2) $(\hat{y})_j< 0$; (3) $(\hat{y})_j>(\overline{y})_j$.

In the first case, we have $\mathcal{P}_j[(\hat{y})_j]=(\hat{y})_j$, so
$(\mathfrak{g}_y(x,y))_j=\frac{1}{\beta}((y)_j-(\hat{y})_j)=-c_j(x).$
Hence, we have $[c_j(x)]_+\leq |c_j(x)|=|(\mathfrak{g}_y(x,y))_j|$ and $(y)_j|c_j(x)|\leq \overline{m}|(\mathfrak{g}_y(x,y))_j|$.

In the second case, we have $\mathcal{P}_j[(\hat{y})_j]=0$. Then, we can get
$|(\mathfrak{g}_y(x,y))_j|=\frac{1}{\beta}(y)_j\geq 0$ and $(\hat{y})_j=(y)_j+\beta c_j(x)< 0$, which leads to $c_j(x)<0$. Therefore, it yields $[c_j(x)]_+=0$. Since $|c_j(x)|\leq \Lambda$ under Assumption \ref{assump:lipschitz}, $(y)_j|c_j(x)|\leq\beta \Lambda|(\mathfrak{g}_y(x,y))_j|$.

In the third case, we have $\mathcal{P}_j[(\hat{y})_j]=(\overline{y})_j$ and $|(\mathfrak{g}_y(x,y))_j|=\frac{1}{\beta}((\overline{y})_j-(y)_j)\geq \frac{\delta}{\beta}$. Since $(\hat{y})_j=(y)_j+\beta c_j(x)> (\overline{y})_j$, we can get $c_j(x)>\frac{\delta}{\beta}>0.$
Therefore, in this case $[c_j(x)]_+=c_j(x)>0$. By Assumption \ref{assump:lipschitz}, we have
$[c_j(x)]_+\leq \Lambda \leq \frac{\Lambda \beta}{\delta}|(\mathfrak{g}_y(x,y))_j|,$
and similarly
$(y)_j|c_j(x)|\leq (\overline{y})_j\Lambda\leq \frac{\overline{m}\Lambda \beta}{\delta}|(\mathfrak{g}_y(x,y))_j|,$
where we used $|(\mathfrak{g}_y(x,y))_j|\geq \frac{\delta}{\beta}$.

Denote $\hat{c}=\max(1, \frac{\Lambda\beta}{\delta})$. Then, we have $[c_j(x)]_+\leq \hat{c}|(\mathfrak{g}_y(x,y))_j|\leq \hat{c}\|\mathfrak{g}_y(x,y)\|$ and $(y)_j|c_j(x)|\leq \overline{m}\hat{c}|(\mathfrak{g}_y(x,y))_j|\leq \overline{m}\hat{c} \|\mathfrak{g}_y(x,y)\|$
Let $C=1+(\overline{m}+1)\hat{c}$. Finally, we can conclude
$\mathcal{K}(x,y)\leq C \|\mathfrak{g}(x,y)\|$.

\section{Detailed Experimental Settings}\label{app:experiemnt_setting}
\subsection{Energy Management}
We consider a classic energy management problem in power systems called load curtailment. In this problem, a load aggregator tries to coordinate the loads of multiple users within a distribution network to meet the load requirements imposed by the higher-level grid operator. On the one hand, the aggregator needs to ensure that the total power injection into the network satisfies a constraint tied to a reference load. On the other hand, the operational costs associated with users' load adjustments should be minimized to maintain a satisfactory consumer experience.

% \begin{figure}[htpb]
%     \centering
%     \includegraphics[width=0.8\linewidth]{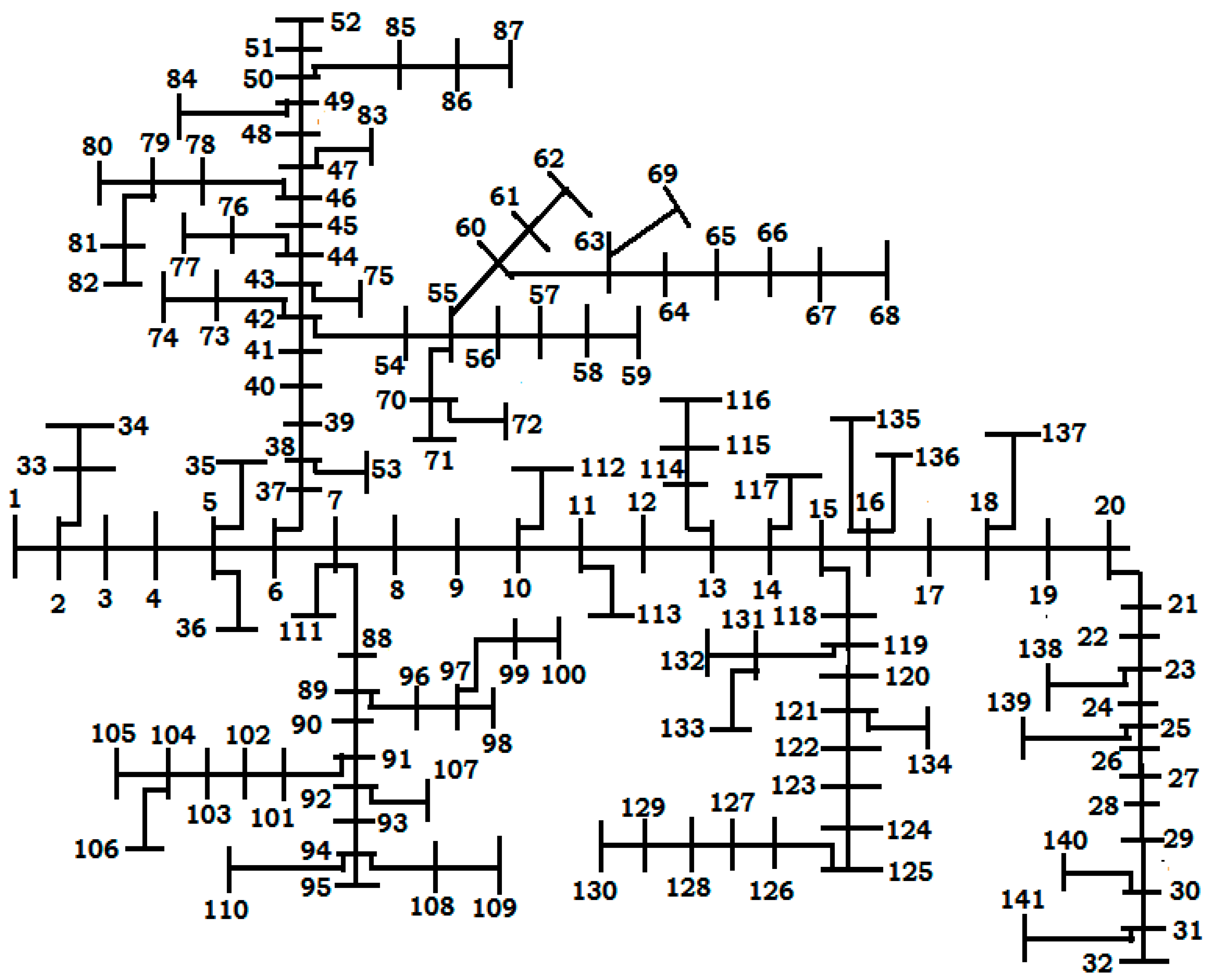}
%     \caption{141-bus distribution network.}
%     \label{fig:141_bus}
% \end{figure}

Mathematically, denote $x\in\mathbb{R}^{d_x}$ as the power load of multiple users. Let $D$ denote the reference load received from the grid operator, which imposes a constraint on the distribution network’s net power exchange with the main grid. The power injection of the distribution network is not simply the sum of users' loads but is determined by nonlinear power flow dynamics. Denote the dynamics as a function of the load levels of multiple users $p_c(x):\mathbb{R}^{d_x}\to \mathbb{R}$. In our formulation, $p_c(x)$ is viewed as a black box, provided that the topology and parameters of the distribution network are unknown. That means we can only observe the total power consumption \( p_c(x) \) of a distribution network given the power load $x$ of users.

\begin{figure}
    \centering
    \includegraphics[width=0.7\linewidth]{}
    \caption{141-bus distribution network}
    \label{fig:141_network}
\end{figure}

We apply the 141-bus distribution network model as the nonconvex black box (shown in Figure \ref{fig:141_network}) \citep{khodr2008maximum}. 
We consider the following problem with $d_x=168$:
\begin{equation}
\begin{aligned}\label{eq:energy_problem}
&\min_{x\in\mathbb{R}^{168}} h(x)=\sum_{i\in[168]}c_i(x(i))+\rho(x),\quad \text{s.t.}\; p_c(x)\leq D,\;x\in \mathcal{X}= [\underline{x},\overline{x}],
\end{aligned}    
\end{equation}
where $c_i:\mathbb{R}\to \mathbb{R}$ is the cost function of user $i$ and defined as $c_i(x(i))=a_i x^2(i)+b_ix(i)$. $\rho (x):\mathcal{X}\to \mathbb{R}$ is the penalty term when the voltage is out of the standard region and formulated as 
$$\rho(x)=\sum_{j\in [141]} \left( \max(v_j(x)-\overline{v},0)^2+\max(\underline{v}-v_j(x),0)^2  \right).$$
Here $v_j(x)$ denotes the voltage of the $j$th node when the power load of the distribution network is $x$, and $\underline{v},\overline{v}$ represent the lower and upper bounds of the voltage. $v_j:\mathbb{R}^{168}\to\mathbb{R},\forall j$ is also a black-box mapping with only the function value observable. Therefore, the objective and constraint functions in problem (\ref{eq:energy_problem}) are both non-analytical.

\textbf{Parameters Setting.}
In our numerical simulation, $\underline{x}$ is also set to 0, and $\overline{x}$ is the nominal load level from the original 141-bus system. The coefficients of cost functions, $a_i$ and $b_i$, are randomly sampled from the intervals $(0.5,1.5)$ and $(0,5)$, respectively. For the voltage penalty, we set $\underline{v}=0.96 \; p.u.$ and $\overline{v}=1.04\; p.u.$.  The total load to be curtailed is set to $0.15 \;p.u.=1500\; kW$. 
In addition to testing our proposed algorithms, we also compare them with three other algorithms, ZO-MinMax, ZOAGP, and stochastic zeroth-order constraint extrapolation (SZO-ConEx). For all tests, we compute $\|\mathfrak{g}(x_k,y_k)\|$ as the stationarity measure. The constraint violation is measured by $p_c(x)-D$. In Table 2, the relative error is computed by $(h(x_k)-h^*)/h^*$ based on the optimal objective function value $h^*=0.3841$.

For ZOB-GDA, we consider four scenarios with batch sizes $b=1,10,50,168$, where the step sizes are set as $\alpha=0.03, 0.015, 0.005, 0.00035$ and $\beta=0.01\alpha$, respectively. For ZOB-SGDA, we set $p=10$ and $\gamma=0.3$, and adopt the same batch-size scenarios and corresponding step sizes are set as $\alpha=0.03, 0.025, 0.005, 0.00035$ and $\beta=0.01\alpha$. For the benchmark algorithms, ZO-MinMax is implemented with $\alpha=\beta=5\times 10^{-6}$; since it involves constraint-handling techniques, we adopt a decaying penalty parameter $\delta_k = \min(50/k,0.1)$. For SZO-ConEx, we set $\alpha=\beta=5\times 10^{-6}$. For ZOAGP, we choose $\alpha=\beta=0.00035$. We set the maximum iteration steps as $K=20000$.
For the smoothing radius, ZOB-GDA, ZOB-SGDA, and ZOAGP use $r_k = \min(10^{-1}/k^{1.2}, 2\times 10^{-4})$, while SZO-ConEx and ZO-MinMax use $r_k = \min(10^{-2}/(k+4000)^{1.1}, 1\times 10^{-5})$.
All parameters are selected for the best performance among multiple options. All experiments are conducted on a MacBook Pro laptop equipped with an Apple M1 Pro SoC (10-core CPU: 8 performance cores and 2 efficiency cores) and 16 GB of unified memory.

\subsection{High-Dimensional Parameter Optimization}
In the 1000-dimensional constrained nonconvex optimization problem, each algorithm is tested 20 times.

\textbf{Parameter Settings} The matrix $B\in\mathbb{R}^{1000\times1000}$ and the constraint vector $q\in\mathbb{R}^{1000}$ are generated randomly with i.i.d. entries following $\mathcal{N}(0,\sigma^2)$ with $\sigma=1/10\sqrt{10}$, and remain fixed throughout each run. We set $\Bar{c}=0.5$ for all tests. For ZOB-GDA, we apply a tuned block size $b=30$, and the step sizes are set as $\alpha=0.5$ and $\beta=0.5$. For ZOB-SGDA, we set $p=0.1$, $\gamma=0.9$, $\alpha=0.5$ and $\beta=5$. For the comparison algorithms, ZO-MinMax is implemented with $\alpha=\beta=0.2$. For SZO-ConEx, we set $\alpha=0.3$ and $\beta=0.003$. For ZOAGP, we set $\alpha=0.1$ and $\beta=5$. The maximum budget of function evaluations is set as $1\times 10^6$.
For the smoothing radius, ZOB-GDA and ZOB-SGDA use $r_k = \min(10^{-2}/k, 1\times 10^{-3})$; ZO-MinMax and SZO-ConEx use $r_k = \min(10^{-2}/k, 2\times 10^{-4})$; ZOAGP uses $r_k = \min(10^{-3}/k, 1\times 10^{-4})$.
All parameters are selected for the best performance among multiple options.

\end{document}